\newtheorem{thm}{Theorem}[section]
\newtheorem{cor}[thm]{Corollary}
\newtheorem{conj}[thm]{Conjecture}
\newtheorem{lem}[thm]{Lemma}
\newtheorem{prop}[thm]{Proposition}
\newtheorem*{prop*}{Proposition}
\newtheorem*{thm1}{Theorem A}
\newtheorem*{thm2}{Theorem B}
\newtheorem*{cor*}{Corollary}
\newtheorem*{conj*}{Conjecture}
\newtheorem*{move1'}{Move $\mathbf{1^*}$}
\theoremstyle{remark}
\newtheorem{rmk}[thm]{Remark}
\newtheorem*{Acknowledgements*}{Acknowledgements}
\theoremstyle{definition}
\newtheorem{defn}[thm]{Definition}
\newtheorem{eg}[thm]{Example}
\crefname{defn}{Definition}{Definitions}
\crefname{thm}{Theorem}{Theorems}
\crefname{prop}{Proposition}{Propositions}
\crefname{lem}{Lemma}{Lemmas}
\crefname{cor}{Corollary}{Corollaries}
\crefname{conj}{Conjecture}{Conjectures}
\crefname{section}{Section}{Sections}
\crefname{subsection}{Subsection}{Subsections}
\crefname{eg}{Example}{Examples}
\crefname{figure}{Figure}{Figures}
\crefname{rem}{Remark}{Remarks}
\crefname{rmk}{Remark}{Remarks}
\crefname{equation}{equation}{equation}
\renewcommand{\sharp}{\ell}
\Crefname{defn}{Definition}{Definitions}
\Crefname{thm}{Theorem}{Theorems}
\Crefname{prop}{Proposition}{Propositions}
\Crefname{lem}{Lemma}{Lemmas}
\Crefname{cor}{Corollary}{Corollaries}
\Crefname{conj}{Conjecture}{Conjectures}
\Crefname{section}{Section}{Sections}
\Crefname{subsection}{Subsection}{Subsections}
\Crefname{eg}{Example}{Examples}
\Crefname{figure}{Figure}{Figures}
\Crefname{rem}{Remark}{Remarks}
\Crefname{rmk}{Remark}{Remarks}
\numberwithin{equation}{section}
 \newcommand{\W}{{\mathfrak S}}
\newcommand{\Sym}{{\mathfrak S}}
\newcommand{\elemvec}{{\mathsf e}}
\newcommand{\Hom}{\operatorname{Hom}}
\newcommand{\End}{\operatorname{End}}
\newcommand{\ind}{\operatorname{ind}}
\newcommand{\suchthat}{\;\ifnum\currentgrouptype=16 \middle\fi|\;} 
\newcommand{\NN}{{{\mathbb N}}}
\renewcommand{\le}{\leqslant}
\renewcommand{\ge}{\geqslant}
\newcommand{\GL}{\mathrm{GL}}
\newcommand{\CC}{\mathbb{C}}
\newcommand{\N}{\mathrm{N}}
\newcommand{\C}{\mathbb{C}}
\newcommand{\Pdelta}{P_r(\delta_1\delta_2)}
\newcommand{\Pmn}{P_r(mn)}
\newcommand{\Fdelta}{\mathbb{F}^r(\delta_1, \delta_2)}
\newcommand{\Fmn}{\mathbb{F}^r(m,n)}
\mathchardef\mhyphen="2D
\DeclareMathOperator{\node}{node}
\let\originalleft\left
\let\originalright\right
\def\left#1{\mathopen{}\originalleft#1}
\def\right#1{\originalright#1\mathclose{}}
\renewcommand{\ge}{\geqslant}
\renewcommand{\ge}{\geqslant}
\renewcommand{\geq}{\geqslant}
\renewcommand{\le}{\leqslant}
\renewcommand{\leq}{\leqslant}
  \newcommand{\pgen}{p}
    \newcommand{\egen}{e}
      \newcommand{\sgen}{s}
\def\ignore#1{\relax}
\def\ignore#1{\relax}
\newcommand\encircle[1]{%
  \tikz[baseline=(X.base)] 
    \node (X) [draw, shape=circle, inner sep=0] {\strut #1};}
\renewcommand{\theta}{\alpha}
\begin{document} 
 \title[The partition algebra and the plethysm coefficients, I] {The partition algebra and the plethysm coefficients  I: Stability and Foulkes' conjecture }
 \author{Chris Bowman}
       \address{Department of Mathematics, 
University of York, Heslington, York,  UK}
\email{Chris.Bowman-Scargill@york.ac.uk}
  \author{Rowena Paget}

\address{School of Mathematics, Statistics and Actuarial Science University of Kent, 
CT2 7NF, UK}
 \email{R.E.Paget@kent.ac.uk}
 \maketitle
  
 \renewcommand{\Bbbk}{\mathbb{C}}
 
 \maketitle
 
 \vspace{-0.2cm}
% \!\!\!\!\!\!\!\!\!\!\!\!\!\!\!\!\!\!\!\!\!\!
 \begin{abstract}
 We propose a new approach to study plethysm coefficients by using the Schur-Weyl duality between the symmetric group and the partition algebra.  
This provides an explanation of the stability properties of plethysm and Kronecker coefficients in a simple and uniform fashion for the first time.  
We prove the strengthened Foulkes'  conjecture  for stable plethysm coefficients in an elementary fashion.  
 \end{abstract}
  \vspace{-0.2cm}
  \section*{Introduction}
Understanding the \emph{plethysm coefficients} is a fundamental problem in the representation theories of symmetric and general linear groups 
and was identified by Richard Stanley as one of the most important open problems in algebraic combinatorics \cite{MR1754784}.   
Perhaps the oldest  and most famous   question  concerning plethysm coefficients is a
 conjecture of  Foulkes
 from 1950 \cite{MR0037276}.  
To state Foulkes' Conjecture, we first require some notation.  
Let $m,n \in \mathbb{N}$ and $\alpha$ be a partition of $mn$ and let $\Sym _m\wr \Sym _n$ denote the wreath product subgroup of $\Sym _{mn}$.  The plethysm coefficient
$p((n), (m), \alpha)$ is the multiplicity of the irreducible $\CC \Sym _{mn}$-module ${\sf S}(\alpha)$ as a composition factor of the Foulkes module $\ind_{\Sym _m\wr \Sym _n}^{\Sym _{mn}} \CC$.
 Equivalently,  these plethysm coefficients  record the decomposition of the  $\GL_{mn}(\CC)$-module ${\rm Sym}^n({\rm Sym}^m \! (\CC^{mn}))$ into irreducible summands and also the decomposition of
the plethysm of symmetric functions $s_{(n)} \circ s_{(m)}$ as an integral linear combination of Schur functions.
 Foulkes' Conjecture states, for all $m\leq n$ and for all $\alpha\vdash mn$, that
\begin{equation}\tag{1}\label{1}
p(  (m),(n) ,\alpha) \leq p((n), (m), \alpha).
\end{equation}
   A stronger conjecture made in \cite{MR2067621}  states that 
\begin{equation}\tag{2}\label{2}
p((q), (p), \alpha) \leq 
p((n), (m), \alpha)
\end{equation}
for all $m\leq n,p,q$ with $mn=pq$.  
Plethysm is defined for arbitrary  partitions of $m$ and $n$, but for the purposes of this paper our interest lies in the `Foulkes case' where
both partitions have precisely one row.  
In this article, we study families of these plethysm coefficients.   For an arbitrary partition  $\lambda=(\lambda_1,\lambda_2,\dots, \lambda_\ell)$ ,  set $\lambda_{[mn]}=(mn-|\lambda|,\lambda_1,\lambda_2,\dots, \lambda_\ell)$. We look at the coefficient %$p((n), (m), \alpha)$ 
$p((n), (m), \lambda_{[mn] })$
as $m$ and $n$ vary.  We 
 ask, for a fixed partition $\lambda$,  whether
$$ p((m), (n),  \lambda_{[mn]} ) \leq p((n), (m),  \lambda_{[mn] })$$ for all $m<n$.
 Our first   theorem verifies that
 this is indeed the case  for all except possibly a finite list of values for  $m, n \in \mathbb{N}$.
 Discarding this finite list of values, both Foulkes' Conjecture  and the strengthened Foulkes' Conjecture hold for the partition $\lambda_{[mn]}$.
 Moreover, we can even drop the assumption in \cref{2} that $mn=pq$, relating plethysm coefficients for $\lambda_{[mn]}$ and $\lambda_{[pq]}$ outside of these values.% for $m$.

   \begin{thm1} 
   \label{thm1}
 Let $\lambda$ be an arbitrary partition.  
 For any  $m,n,p,q \geq |\lambda|$, 
 $$  p(   ( q), (p), \lambda_{[pq]})=   p(   ( n), (m), \lambda_{[mn]}). $$   
 In particular, taking $p=n$ and $q=m$,   Foulkes'   Conjecture  holds for $\lambda_{[mn]}$ for all but finitely many values of  $m,n\in \mathbb{N}$, as does the strengthened Foulkes' Conjecture.  
     \end{thm1}
     
     The proof of this result constructs a partition algebra isomorphism which ``does not see" any difference between $m$ and $n$ providing they are both sufficiently large.  This seems to provide the first conceptual explanation for why Foulkes' conjecture ``should" be true.

 One of  the key ideas in our approach is to consider the stable limit of a certain sequence of plethysm coefficients.  
  Brion \cite{brion} and Carr\'e--Thibon \cite{MR1190119} proved that the following sequences of plethysm coefficients 
 $$
\{ p(   ( n), (m), \lambda_{[mn]})  \}_{n \in \mathbb{N}}
\qquad 
\{ p(   ( n), (m), \lambda_{[mn]})\}_{m \in \mathbb{N}}
 $$
 have stable limits for $n$ (respectively $m$) sufficiently large with respect to $m$ (respectively $n$).   In fact, Brion's proof of the stability of the former sequence  
settled  a second conjecture from  Foulkes' 1950  paper. 
%In this paper we encounter a new kind of stability given only in terms of  $|\lambda|$.  We shall see that the double-sequence 
In this paper we consider the stable limit of the  double-sequence 
 \begin{equation}\label{dagger}\tag{$\dagger$}
\overline{p}_{\infty,\lambda}=\lim_{m,n\to \infty}  \{p(   ( n), (m), \lambda_{[mn]})		\}.%_{{n,m\in \mathbb{N}  }} 
\end{equation}
These stable values are  achieved whenever $m,n\geq |\lambda|$.  
 We study these stable plethysm coefficients through the Schur--Weyl duality between the   symmetric group, $  \Sym _{mn}$, and the partition algebra, $P_r(mn)$, via their actions on the tensor space $(\mathbb{C}^{mn})^{\otimes r}$.   This duality results in a   functor $$\mathcal{F}_r : 
\Sym _{mn}\mhyphen{\rm mod} \to{\rm mod}  \mhyphen P_r(mn).$$   
The key observation is that the module $\mathcal{F}_r(\ind_{\Sym _m\wr \Sym _n}^{\Sym _{mn}}(\Bbbk))$ 
has an elegant diagrammatic description for $m,n\geq r$.  
 By considering the module $\mathcal{F}_r(\ind_{\Sym _m\wr \Sym _n}^{\Sym _{mn}}(\Bbbk))$ 
  for each  $r\geq 0$ in turn, 
   we are able to focus solely on the  `$r$th  layer' of plethysm constituents 
 $p(   ( n), (m), \lambda_{[mn]})$ for which  $|\lambda|= r$.    We hence deduce that  Foulkes' conjecture holds for $\lambda_{[mn]}$ whenever  $m,n\geq |\lambda|$
and obtain a simple proof of the stability (\ref{dagger}).

   Our second main result (which subsumes Theorem A) calculates the value of any  {stable plethysm coefficient} in terms of 
  plethysm coefficients labelled by much smaller partitions  and use of the Littlewood--Richardson rule.  
A  geometric proof of this result in the language of jet schemes is given in \cite{MR1651092}.  
 % For a partition $(m_1^{n_1}, \ldots, m_k^{n_k})$ of $r$, the generalised plethysm coefficient $p_{(m_1^{n_1}, \ldots, m_k^{n_k}), \theta}$ is the multiplicity of  ${\sf S}(\theta)$ as a composition factor of $\ind_{\Sym _{m_1}\wr \Sym _{n_1} \times \cdots \times \Sym _{m_k}\wr \Sym _{n_k}}^{\Sym _{r}} \CC$. 
 Our proof of this result is given by explicitly decomposing the $P_r(mn)$-module  $\mathcal{F}_r(\ind_{\Sym _m\wr \Sym _n}^{\Sym _{mn}}(\Bbbk))$ using its diagrammatic incarnation.
%Here we state the numerical consequence of   \cref{}. However we note that \cref{} is actually proven on the level of isomorphisms between modules for the symmetric group

    \begin{thm2}\label{thm2}
   Let  $\lambda$ be a partition of $r\in \mathbb{N}$.  For all $m,n \geq r$ we have that
   $$
   \overline{p}_{\infty,\lambda } = p(   ( n), (m), \lambda_{[mn]})= \sum_{\boldsymbol
   \mu \in \mathscr{P}_1(r)
   }p_{\boldsymbol\mu}( \lambda ),
   $$
   where $ \mathscr{P}_1(r)$ is the set of all partitions of $r$ 
   whose parts are all strictly greater than~1.   The coefficients $p_{\boldsymbol\mu}( \lambda )$
are the generalised plethysm coefficients defined in \cref{sec1}.
  \end{thm2}

Our approach brings forward a general tool to study stable and non-stable plethysm coefficients and provides a natural framework for the study of the outstanding problems in the area. In particular, one should notice that our proofs are surprisingly elementary and treat the stabilities of Kronecker  and plethysm coefficients uniformly alongside one another for the first time ---  as   the parameters  increase, the action of the partition algebra becomes faithful and semisimple exactly as in the case of the Kronecker coefficients  \cite{BDO15}.  
We consider Theorem B to be the natural analogous statement   to that for Kronecker coefficients in   \cite[Corollary 4.5]{BDO15}.
 Particular highlights of our approach include easy algebraic proofs of  Foulkes'  and Weintraub's conjectures  for stable plethysm coefficients.   The partition algebra approach has proven to be very powerful for understanding Kronecker coefficients: in \cite{BDE}  an algorithm is given for  calculating stable Kronecker coefficients in terms of oscillating tableaux. (The partition algebra is essential in the proof and allows one to define a lattice permutation condition on oscillating tableaux.)  We hope that the partition algebra is similarly useful in understand the (stable) plethysm coefficients.

\subsection*{Ramifications}
In this paper, we recast the Foulkes' plethysm coefficients in the setting of the partition algebra.  In the sequel to this paper we will generalise this to arbitrary plethysm coefficients.  The key to this construction will be  the {\em ramified}   partition algebra of \cite{INSERT} which we do not discuss here.  However, the reader familiar with these constructions is invited to observe that  our {\em stable Foulkes module} for $P_r(mn)$ is equal to the restriction to  $P_r(mn)$ of the cell module of the ramified partition algebra denoted by $\Delta_r(\varnothing^\varnothing)$.

\subsection*{An example}
 We conclude this introduction with an example, illustrating how to calculate the multiplicities $   \overline{p}_{\infty,\lambda}=p(   ( n), (m), \lambda_{[mn]})$ for $\lambda$ a partition of ~4 and $m,n\geq 4$.
We pass the question from $\Sym _{mn}$ to the partition algebra $P_4(m n)$  and take  the natural quotient   $P_4(m n) \to \CC\Sym _4$, 
which dramatically reduces the rank of the problem.  
We hence obtain an $\CC\Sym _4$-module  with the following basis
  $$\scalefont{0.8}
 \begin{tikzpicture}[scale=0.8]
    \foreach \x in {0,2,4,6}
        \fill[white](\x,3.5) circle (10pt);   
                   \draw (2,3.5) node {$\encircle{2}$}; 
                           \draw (0,3.5) node {$\encircle{1}$}; 
                                \draw (4,3.5) node {$\encircle{3}$}; 
                           \draw (6,3.5) node {$\encircle{4}$}; 
     \draw[rounded corners=15pt]
  (-0.5,2.75) rectangle ++(3,1.5);
     \draw[rounded corners=15pt]
  (-0.5+4,2.75) rectangle ++(3,1.5); 
      \end{tikzpicture}\qquad\qquad\qquad
  \begin{tikzpicture}[scale=0.8]
    \foreach \x in {0,2,4,6}
        \fill[white](\x,0) circle (10pt);   
                   \draw (2,0) node {$\encircle{2}$}; 
                           \draw (0,0) node {$\encircle{1}$}; 
                                \draw (4,0) node {$\encircle{3}$}; 
                           \draw (6,0) node {$\encircle{4}$}; 
         \path[draw,use Hobby shortcut,closed=true]
 (-0.5,0)..(-.25,.3)..(1.8,-0.65)..(4.2,-0.65)..(6.25,.3)..(6.5,0)..(5,-1)..(3,-1.1)..(1,-1)..(-0.5,0);
   \draw[rounded corners=15pt]
  (2-.5,0-.6) rectangle ++(3,1.2);
   \end{tikzpicture}
      $$
$$\scalefont{0.8}
 \begin{tikzpicture}[scale=0.8]
    \foreach \x in {0,2,4,6}
        \fill[white](\x,0) circle (10pt);   
                   \draw (2,0) node {$\encircle{2}$}; 
                           \draw (0,0) node {$\encircle{1}$}; 
                                \draw (4,0) node {$\encircle{3}$}; 
                           \draw (6,0) node {$\encircle{4}$}; 
         \path[draw,use Hobby shortcut,closed=true]
 (-0.5,0)..(-.25,.3)..(2,-0.65)..(4.25,.3)..(4.5,0)..(3,-1)..(2,-1.1)..(1,-1)..(-0.5,0);

  \path[draw,use Hobby shortcut,closed=true]
 (-0.5+2,0)..(-.25+2,-.3)..(2+2,0.65)..(4.25+2,-.3)..(4.5+2,-0)..(3+2,1)..(2+2,1.1)..(1+2,1)..(-0.5+2,0);
    \end{tikzpicture}
\qquad\qquad\qquad  
   \begin{tikzpicture}[scale=0.8]
    \foreach \x in {0,2,4,6}
        \fill[white](\x,3.5) circle (10pt);   
                   \draw (2,3.5) node {$\encircle{2}$}; 
                           \draw (0,3.5) node {$\encircle{1}$}; 
                                \draw (4,3.5) node {$\encircle{3}$}; 
                           \draw (6,3.5) node {$\encircle{4}$}; 
     \draw[rounded corners=15pt]
  (-0.5,2.75) rectangle ++(7,1.5);
       \end{tikzpicture}
       $$
with the action given by permuting the diagrams in the obvious fashion. It is easy to see that the first three diagrams 
span a cyclic module which decomposes as the sum of Specht modules 
 ${\sf S}(4)\oplus   {\sf S}(2,2)$. 
The fourth diagram provides an indecomposable module isomorphic to 
${\sf S}(4)$.  
Hence, for all $m,n\geq 4$ we deduce that  
  $$p  (( n),(m), (mn-4,4))=2 \qquad   p  (( n),(m), (mn-4,2^2))=1 \qquad  p(   ( n), (m), \lambda_{[mn]})=0$$ for $\lambda =(3,1),(2,1^2),(1^4)$.  %Note that this calculates an infinite number of non-stable plethysm coefficients and henc

  \subsection*{The structure of the paper}
 We present sufficient background to make our exposition accessible to a reader familiar with representation theory: \cref{sec1} recalls the definition of the (generalised) plethysm coefficients, their stabilities, 
and the  statement of Foulkes' conjecture and   the
strengthened Foulkes' conjecture; \cref{sec2} recalls the definition of the partition algebra and the basic facts concerning its representation theory;
\cref{sec3} recalls the Schur--Weyl duality between the symmetric groups and partition algebras which underlies the main results of this paper.  
We apply Schur--Weyl duality   to the Foulkes module to obtain a module for the partition algebra.
%before SW is...
Our fundamental combinatorial object, the fixed depth Foulkes poset, is introduced in   \cref{sec:poset}.  It is used to construct a diagrammatic module for the partition algebra in \cref{sec:diagrammatic}, which we relate to Foulkes modules in \cref{relateS}.
 In \cref{sec5} the construction of a filtration of our diagrammatic module allows us to prove Theorem A, and  its decomposition into its irreducible components  proves Theorem B.  

%%%%%%%%%%%%%%%%%%%%%%%%%%%%%%%%%%%%%%%%%%%%%%%%%%%

\bigskip

 This paper has been on the arXiv for quite some time, while we were preparing 
 the sequel.  Since then, certain plethysm coefficients have been studied by
 Orellana--Saliola--Schilling--Zabrocki in the context of the {\em party algebra}, a subalgebra of the partition algebra (see
 \cite{party}).  
We do not believe there is any overlap in our results, but 
the ideas do have a similar diagrammatic flavour.

\begin{Acknowledgements*}
The authors would like to thank Mark Wildon for providing us with a wealth of small rank examples which he calculated using his plethysm coefficients software.  We would also like to thank Rosa Orellana and Mike Zabrocki for interesting and informative conversations and the Oberwolfach workshop 
``Character Theory and Categorification" for providing an excellent environment for collaboration.  
The first author is grateful to financial support from EPSRC grant EP/V00090X/1.
\end{Acknowledgements*}  
%\bibliographystyle{amsplain}  %(other styles: ieeetr, plain, amsplain, siam, unsrt, abbrv, apalike, alpha, amsalpha)
%\bibliography{book3}

  \section{Foulkes' conjecture and plethysm coefficients}\label{sec1}
  
  We let $\Sym _{n}$ denote the symmetric group on $n$ letters.   
     The combinatorics underlying the representation theory of the partition algebras and   symmetric 
     groups  is based on (integer) compositions and   partitions.  
A \emph{composition}    $\lambda $ of $n$, denoted $\lambda \vDash n$, is defined to be a  sequence   of non-negative integers which  sum to $n$.  
  If the sequence is weakly decreasing, we write  $\lambda \vdash n$
  and refer to $\lambda$ as a \emph{partition} of $n$.  We let 
  $\mathscr{P}(n)$ denote the set of all partitions of $n$ and we let   $\mathscr{P}_1(n)$ denote the subset of those partitions whose (non-zero) parts are all strictly greater than 1.  (We shall not write down the zero parts.)
We say that a partition $\lambda=(\lambda_1,\lambda_2,\lambda_3,\dots,\lambda_\ell)$ has {\em depth} equal to $\lambda_2+\lambda_3+\dots+\lambda_\ell$.  We write $\emptyset$ for the unique partition of zero.
%
% REP removed: length of integer partition not used.
%We define the length, $\ell(\lambda)$, of a partition, $\lambda$, to be the number of non-zero parts.
%
%We let $\Sym _{n}$ denote the symmetric group on $n$ letters,  
%Over the complex numbers, the irreducible Specht modules, ${\sf S}(\lambda)$ of $\Sym _{n}$ are indexed by the partitions, $\lambda$, of $n$.

Associated to each   partition $\lambda$ of $n$, we have a        simple    right  $\mathbb{C}\Sym _{n}$-module, often referred to as a {\em Specht} module. 
An explicit construction of these modules is given in \cite[\S4]{james}, where it is shown that 
the Specht modules provide a complete set of irreducible $\CC\Sym _{n}$-modules  indexed by the partitions of $n$.
  
Now let $m,n\in \mathbb{N}$ and consider the symmetric group   $\Sym _{mn}$.
Given  $\lambda$ and $\boldsymbol\mu=(m_1^{n_1},m_2^{n_2},\dots, m_\ell^{n_\ell})  $,   partitions of $ mn$, 
we  suppose that $m_1>m_2> \dots >m_\ell$.  
Associated to $\boldsymbol\mu$ we have a subgroup 
%
% REP: I really dislike this notation as it looks too much like a Young  subgroup
%$$\Sym _\mu=   
%
\begin{equation}\label{genwreathprodsubgroup}
\prod_i\Sym _{m_i}\wr \Sym _{n_i}= \Sym _{m_1}\wr \Sym _{n_1}\times 
\dots \times  
   \Sym _{m_\ell}\wr \Sym _{n_\ell}  
   \leq \Sym _{mn}
   \end{equation}
   and we define the generalised plethysm coefficient $ p_{\boldsymbol\mu}(\lambda)$  
to be the multiplicity of ${\sf S}(\lambda)$ as a composition factor of the permutation module $\ind^{\Sym _{mn}}_{\prod_i \Sym _{m_i}\wr \Sym _{n_i} }(\Bbbk)$,
\begin{equation}\label{genplethy}%\tag{$\dagger$}
   p_{\boldsymbol\mu}(\lambda)=
%\dim_\Bbbk(\Hom_{\Sym _{mn}}(\ind^{\Sym _{mn}}_{\Sym _\mu}(\Bbbk), {\sf S}(\lambda))).
\left[  \ind^{\Sym _{mn}}_{\prod_i\Sym _{m_i}\wr \Sym _{n_i}}(\Bbbk) \, : \,  {\sf S}(\lambda))\right]_{\CC \Sym _{mn}}.
\end{equation}
   In the special case of a rectangular partition, $\boldsymbol\mu=(m^n)$, the subgroup above specialises to be 
 $\Sym _{m}\wr \Sym _{n}$ and  we hence obtain the classical plethysm coefficients  $p_ {(m^n)}(\lambda)= p((n),(m),\lambda)$ defined explicitly in the introduction.  
  
     Given a fixed integer $ mn\in \mathbb{N}$, the plethysm coefficients (associated to $\boldsymbol\mu=(m^n)$)  are the most 
     difficult examples of the coefficients in \cref{genplethy}.  Indeed, all other examples of coefficients 
       in \cref{genplethy} can be obtained from  an understanding of the  smaller rank plethysm coefficients and applications of    the Littlewood--Richardson rule (for the statement of which, see \cite[\S16]{james}).  To see this, simply note that we are inducing from a product (hence the Littlewood--Richardson coefficients) of wreath product subgroups (hence the plethysm coefficients).

%Let $\lambda = (\lambda_1,\lambda_2, \ldots,\lambda_{\ell} )$ be a partition and $nm$   an integer, define $\lambda_{[mn]}=(mn-|\lambda|, \lambda_1,\lambda_2, \ldots,\lambda_{\ell})$.  
Recall from the introduction that if  $\lambda$ is a partition then $\lambda_{[mn]}$ denotes the partition of $mn$ whose Young diagram is obtained by appending an additional row above those of $\lambda$.
Note that all partitions of $mn$ can be written in this form.
 Brion    \cite{brion}  showed   that if we allow both the value of  $m$   and hence the length of the first row of $\lambda_{[mn]}$ to increase, then we obtain a limiting behaviour as follows. 
  For $m$ sufficiently large with respect to $n$,
   Brion proved  that
$$%p_{(m^{n+k})}( \lambda_{[m(n+k)]})
p(   ( n+k), (m), \lambda_{[m(n+k)]}) 
=p(   ( n), (m), \lambda_{[mn]}) $$ %REP changed from p bar notation as not introduced 
for all $k\geq 1$.  This 
stability was conjectured by Foulkes in \cite{MR0037276}.      
 In the other direction, Carr\'e--Thibon \cite {MR1190119} showed for $n$ sufficiently large with respect to $m$,
   we have that
$$%p_{((m+k)^{n}),\lambda_{[(m+k)n]}}
p(   ( n ), (m+k), \lambda_{[ (m+k)n]})  =p(   ( n), (m), \lambda_{[mn]}) $$
for all $k\geq 1$.  
% As far as we are aware, no bounds for  either of these stabilities are known.  
Notice that in each case we require that $m$ (respectively $n$) is sufficiently large with respect to 
$n$ (respectively $m$).  
Therefore one cannot, a priori, consider the limit as  $n$ and $m$  both tend to infinity.  
In this paper we shall consider the stability of the double-sequence 
$$
\overline{p}_{\infty}(\lambda)=\lim_{m,n\to \infty}  \{p(   ( n), (m), \lambda_{[mn]})\}. 
$$
and  we shall see that   $n$ and $m$ 
are only required to be greater than $|\lambda|$ (and can be chosen freely with respect to each other) for this stability to occur.

We now recall Foulkes' original conjecture from \cite{MR0037276}.  
This conjecture has been settled for $m\ll n$ by Brion   \cite{brion}.
%For partitions which are minimal in the dominance ordering, 
%
All other results in the area concern small values of $m$;  namely, 
$m=2$  \cite{thrall}, $m=3$ \cite{dent}, $m=4$ \cite{mckay}, $m=5$ \cite{MR3574536}.
% ADD REFERENCE: M. Cheung, C. Ikenmeyer, S. Mkrtchyan, Symmetrizing Tableaux and the 5th case of the Foulkes Conjecture,
%Journal of Symbolic Computation 80(3):833--843, 2016.  

\begin{conj} {\bf (Foulkes' Conjecture)}
Let  $m, n \in \mathbb{N}$ with $m\leq n$.
Then   $$p(   ( m), (n), \lambda_{[mn]})\leq
p(      (n),( m), \lambda_{[mn]})$$ for all partitions $\lambda$.  
\end{conj}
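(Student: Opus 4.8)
The plan is to peel Foulkes' inequality apart one ``layer'' at a time, using Schur--Weyl duality to push the question into the partition algebra, and to isolate the genuinely hard case --- which is the non-stable regime $m<|\lambda|$. Fix a partition $\lambda$ and set $r=|\lambda|$. Since $\CC\mathfrak{S}_{mn}$ is semisimple, the functor $\mathcal{F}_r\colon\mathfrak{S}_{mn}\mhyphen{\rm mod}\to P_r(m,n)\mhyphen{\rm mod}$ is exact; it annihilates the Specht module ${\sf S}(\mu)$ unless $\mu=\nu_{[mn]}$ for some partition $\nu$ with $|\nu|\leq r$, and it sends such an ${\sf S}(\nu_{[mn]})$ to the cell module of $P_r(m,n)$ indexed by $\nu$. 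As the classes of these cell modules ($|\nu|\leq r$) are linearly independent in the Grothendieck group $K_0(P_r(m,n))$ (cellularity), the coefficient $p_{(m^n),\lambda_{[mn]}}$ is exactly the multiplicity of the $\nu=\lambda$ cell module in the cell-module expansion of the class $[\mathcal{F}_r(\ind^{\mathfrak{S}_{mn}}_{\mathfrak{S}_m\wr\mathfrak{S}_n}\CC)]$, and similarly for $(n^m)$. For $m\geq r$ --- hence also $n\geq r$, as $m\leq n$ --- Theorem~A gives the \emph{equality} $p_{(m^n),\lambda_{[mn]}}=p_{(n^m),\lambda_{[mn]}}=\overline{p}_{\infty,\lambda}$, so Foulkes' inequality holds in the stable range; and the case $m=1$ is immediate, since $\mathfrak{S}_1\wr\mathfrak{S}_n=\mathfrak{S}_n$. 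The conjecture thus reduces to the finitely many values $2\leq m<r$ (with $n\geq m$ arbitrary).

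For these small values of $m$ the plan is to compare the two modules $M=\mathcal{F}_r(\ind_{\mathfrak{S}_m\wr\mathfrak{S}_n}\CC)$ and $M'=\mathcal{F}_r(\ind_{\mathfrak{S}_n\wr\mathfrak{S}_m}\CC)$ in $K_0(P_r(m,n))=K_0(P_r(n,m))$, reducing Foulkes to the statement that every cell-module coefficient of $[M]$ is at most the corresponding coefficient of $[M']$. I would attack this on two fronts. First, a truncated analogue of Theorem~B: for $n$ large relative to $m$ one expects $p_{(m^n),\lambda_{[mn]}}=\sum_{\mu}p_{\mu,\lambda}$ where $\mu$ now ranges over the sub-poset of $\mathscr{P}_1(r)$ cut out by the constraint of having only $m$ blocks (and dually for $(n^m)$, a constraint on the block size), so that the desired inequality becomes an inclusion of index sets together with a comparison of Littlewood--Richardson sums. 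Second, for the remaining non-stable small-$n$ values, one must work directly with the diagrammatic model of \cref{sec:diagrammatic}: for $m,n\geq r$ it furnishes a basis of $M$ indexed by the fixed depth Foulkes poset of \cref{sec:poset}, but for $m<r$ that model degenerates to a set-partition \emph{spanning} set subject to linear relations forced at the non-generic value $mn$, and the target is a $P_r(m,n)$-module homomorphism $M\to M'$ (or a refinement of one cell-filtration by the other) that is injective on each cell-module layer.

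The main obstacle is exactly what has kept Foulkes' conjecture open since 1950. Once $m<r$ the partition algebra acts on $(\CC^{mn})^{\otimes r}$ neither faithfully nor semisimply; the diagrammatic basis of $M$ collapses, genuine linear dependences appear among the spanning diagrams, and --- crucially --- the pattern of these dependences on the $(m^n)$ side is not the same as on the $(n^m)$ side. Showing that the $(m^n)$ side always carries ``at least as many'' relations, in precisely the way that forces the cell-module coefficient inequality, is the whole difficulty. I would expect to need new structural information about the non-generic representation theory of $P_r$ (its decomposition numbers, or the projective covers of the relevant simples restricted to the top $r$ layers), or failing that to import the known small cases --- $m=2$ \cite{thrall}, $m=3$ \cite{dent}, $m=4$ \cite{mckay}, $m=5$ \cite{MR3574536} --- to dispatch $6\leq m<r$; but even the latter would have to be made uniform in $\lambda$, which the partition-algebra framework developed here does not achieve. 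For this reason I regard the range $m<|\lambda|$ as the genuinely hard and, at present, still-unresolved part of the statement.
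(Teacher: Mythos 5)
The statement you were asked to prove is Foulkes' conjecture itself, which the paper records only as a \emph{conjecture}: it offers no proof, and none exists. The paper's actual contribution (Theorem A together with \cref{cor:foulkes}) is the equality $p_{(m^n),\lambda_{[mn]}}=p_{(n^m),\lambda_{[mn]}}=\overline{p}_{\infty,\lambda}$ in the stable range $m,n\geq|\lambda|$, obtained by pushing the Foulkes module through Schur--Weyl duality into $P_r(m,n)$ and exploiting the parameter-swap symmetry of the diagrammatic module. Your proposal covers exactly that range by essentially the same route (invoking Theorem A), correctly adds the trivial case $m=1$, and then candidly identifies the remaining range $2\leq m<|\lambda|$ as the genuinely open part. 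So you have not proved the statement --- but neither does the paper, and your diagnosis of where the difficulty lives (loss of faithfulness and semisimplicity of the $P_r(m,n)$-action, collapse of the diagram basis, and the asymmetry of the resulting relations between the $(m^n)$ and $(n^m)$ sides) is accurate and consistent with the paper's own framing.

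One technical caveat in the part you do argue: under the functor $\mathcal{F}_r$, the Specht module ${\sf S}(\nu_{[mn]})$ is sent to the \emph{simple} module $L_{r,mn}(\nu)$ (this is the bimodule decomposition in \cref{mrjones}), not to the cell (standard) module $\Delta_{r,mn}(\nu)$. In the stable range these coincide because the algebra is semisimple, so your bookkeeping there is fine; but in the non-stable range $m<r$, where you propose to compare cell-filtration multiplicities of $\mathcal{F}_r(\ind_{\mathfrak{S}_m\wr\mathfrak{S}_n}\CC)$ and $\mathcal{F}_r(\ind_{\mathfrak{S}_n\wr\mathfrak{S}_m}\CC)$, the plethysm coefficient is a multiplicity of a simple, not of a cell module, and translating between the two requires the decomposition numbers of $P_r(m,n)$ at the non-generic parameter $mn$ --- which is precisely the structural information you acknowledge is missing. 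This does not change your conclusion, but it sharpens why the proposed cell-module comparison cannot, as stated, close the gap.
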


A strengthened version of this conjecture was proposed in \cite{MR2067621}.  It is currently only known to hold  for $m=2$ and $n=3$.  
\begin{conj}  {\bf (Strengthened Foulkes' Conjecture)}
Let  $ mn=pq \in \mathbb{N}$ with $m\leq n$ and suppose that $p,q\geq m$.  
Then $$p(    (m), ( n), \lambda_{[mn]})\leq 
p(    (q), ( p), \lambda_{[pq]})
$$ for all partitions $\lambda$.  
\end{conj}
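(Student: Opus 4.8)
The plan is to recast the inequality as a statement about direct summands of Foulkes modules, to clear the ``generic'' range of parameters using Theorem~A, and to localise the real difficulty in the remaining unstable range. First I would use semisimplicity of $\CC\mathfrak{S}_{mn}$: since $mn=pq$, every partition of $mn$ is of the form $\lambda_{[mn]}$, and the inequality $p_{(m^n),\lambda_{[mn]}}\le p_{(p^q),\lambda_{[pq]}}$ holds for \emph{every} partition $\lambda$ if and only if $\ind^{\mathfrak{S}_{mn}}_{\mathfrak{S}_m\wr\mathfrak{S}_n}(\CC)$ is a direct summand of $\ind^{\mathfrak{S}_{pq}}_{\mathfrak{S}_p\wr\mathfrak{S}_q}(\CC)$ as a $\CC\mathfrak{S}_{mn}$-module, equivalently if and only if there is an injective $\CC\mathfrak{S}_{mn}$-homomorphism between them. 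It is this summand statement that I would aim to establish.

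Next, the stable range. Fix $\lambda$ and put $r=|\lambda|$. Under the hypotheses $m\le n$ and $p,q\ge m$ we have $\min(m,n,p,q)=m$, so if $m\ge r$ then $m,n,p,q\ge r$ and Theorem~A gives
$$p_{(m^n),\lambda_{[mn]}}=\overline{p}_{\infty,\lambda}=p_{(p^q),\lambda_{[pq]}},$$
i.e.\ the inequality holds, with equality. The partition-algebra machinery does the work here: applying $\mathcal{F}_r$ and passing to the diagrammatic model, the module $\mathcal{F}_r(\ind^{\mathfrak{S}_{mn}}_{\mathfrak{S}_m\wr\mathfrak{S}_n}(\CC))$ is independent of $m$ and $n$ once $m,n\ge r$, and through the Schur--Weyl functor this identification witnesses the summand statement of the previous paragraph canonically in this range. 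Hence, for each fixed $\lambda$, the conjecture holds for all admissible quadruples $(m,n,p,q)$ except possibly the finitely many with $m<r$.

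The remaining cases $m<|\lambda|$ are the crux, and are exactly where all current approaches to Foulkes' conjecture stall. Here $P_r(m,n)$ and $P_r(p,q)$ need no longer be semisimple and $\mathcal{F}_r$ is no longer faithful enough to transport symmetric-group multiplicities, so Theorem~B is unavailable. I see two plausible routes. The classical one: realise $\ind^{\mathfrak{S}_{mn}}_{\mathfrak{S}_m\wr\mathfrak{S}_n}(\CC)$ and $\ind^{\mathfrak{S}_{pq}}_{\mathfrak{S}_p\wr\mathfrak{S}_q}(\CC)$ as the permutation modules on set partitions of $\{1,\dots,mn\}$ into $n$ blocks of size $m$, respectively $q$ blocks of size $p$; build an $\mathfrak{S}_{mn}$-equivariant map $\Phi$ between them whose $(P,Q)$-entry depends only on the multiset $\{\,|P_i\cap Q_j|\,\}$; and reduce injectivity of $\Phi$ to non-vanishing of the determinant of the associated combinatorial incidence matrix. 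The other: push the diagrammatic construction developed later in the paper into the non-semisimple regime, where the diagrammatic module and its cell filtration are defined over $\ZZ[\delta_1,\delta_2]$ and specialise at $(\delta_1,\delta_2)=(m,n)$ and at $(p,q)$ to the two Foulkes modules, so that the multiplicity of ${\sf S}(\lambda_{[mn]})$ in each is the pairing of fixed, parameter-free cell multiplicities against the cellular decomposition matrix of the relevant partition algebra; the conjecture would then follow from a suitable monotonicity of these decomposition matrices as the parameters shrink. In both routes the hard part is the same in spirit --- a Black--List--Vessenes-type nonsingularity statement, or a positivity property of partition-algebra decomposition matrices --- and both remain open already in the classical case $p=n$, $q=m$ with $m\ge6$. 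A realistic outcome is therefore to combine the first two steps with a separate, case-by-case treatment of the small values $m<|\lambda|$.
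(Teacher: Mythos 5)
The statement you are asked about is not a theorem of the paper at all: it is the Strengthened Foulkes' Conjecture, recorded in \cref{sec1} as an open conjecture (the paper notes it is currently only known for $m=2$, $n=3$), so there is no ``paper's own proof'' to match. What the paper does prove is the stable case: by Theorem A (equivalently \cref{thm:inductive_description} and \cref{cor:foulkes}), one has $p_{(m^n),\lambda_{[mn]}}=p_{(p^q),\lambda_{[pq]}}$ whenever $m,n,p,q\ge|\lambda|$, and your treatment of that range is essentially the paper's: pass through $\mathcal{F}_r$ to the diagrammatic Foulkes module, which is independent of the parameters once $m,n\ge r$. Up to that point your proposal agrees with the paper.

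Beyond that, however, your proposal does not prove the statement, and you say so yourself: the cases $m<|\lambda|$ are left open, and the two routes you sketch (an $\mathfrak{S}_{mn}$-equivariant incidence map whose injectivity rests on a Black--List/Vessenes-type nonsingularity, or a monotonicity property of partition-algebra decomposition matrices in the non-semisimple regime) are statements of open problems, not arguments. One further caution: your opening reduction to the claim that $\ind^{\mathfrak{S}_{mn}}_{\mathfrak{S}_m\wr\mathfrak{S}_n}(\CC)$ is a direct summand of $\ind^{\mathfrak{S}_{pq}}_{\mathfrak{S}_p\wr\mathfrak{S}_q}(\CC)$ is a correct equivalence over $\CC$, but the assertion that the stable-range identification ``witnesses the summand statement canonically'' is misleading --- for any fixed $m,n,p,q$ there are always partitions $\lambda$ with $|\lambda|>m$, so Theorem~A never verifies the summand statement for the full modules; it only settles the multiplicity inequality partition-by-partition in the range $|\lambda|\le m$. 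So the honest conclusion is that your proposal reproduces the paper's partial result (Corollary \cref{cor:foulkes}) by the paper's own method and leaves the conjecture itself exactly as open as the paper does.
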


  \section{The partition algebra}\label{sec2}
  The partition algebra was originally defined by Martin in \cite{marbook}.  All the results in this section are due to Martin and his collaborators: see \cite{mar1} and references therein.  
% We shall define the partition algebra in a slightly odd way, making use of two parameters where only one is needed.  The reason for this will become apparent later on when we consider a two parameter module for this algebra.  
% 
   
 \subsection{Definitions}
 For $r\in\mathbb{N}$, we consider the set $\{1,2,\ldots, r, \bar{1},\bar{2}, \ldots, \bar{r}\}$ with the total ordering 
 $$
 1<2<\dots <r <\overline{1}<\overline{2}<\dots <\overline{r}.
 $$  
We consider set-partitions of $\{1,2,\ldots, r, \bar{1},\bar{2}, \ldots, \bar{r}\}$.
A subset occurring in a set-partition is called a \emph{block}.
For example,
%$$\Lambda 
%REP changed from Lambda as that is set-partition of {1..r} later.
$$x=\{\{1, 2, 4, \bar{2}, \bar{5}\}, \{3\}, \{5, 6, 7, \bar{3}, \bar{4}, \bar{6}, \bar{7}\}, \{8, \bar{8}\}, \{\bar{1}\}\},$$
is a set-partition of  $\{1,2,\ldots, 8, \bar{1},\bar{2}, \ldots, \bar{8}\}$ with five  blocks.
By convention, we order the  blocks in 
%$\Lambda = \{\Lambda_1, \dots, \Lambda_l\}$
$x = \{X_1, \dots, X_l\}$
 by \emph{increasing minima}, so that
\[
1 =   \min X_1 < \min X_2 < \cdots < \min \Lambda_{l-1} <    \min X_l   \leq  \overline{r}. 
%1 =   \min \Lambda_1 < \min \Lambda_2 < \cdots < \min \Lambda_{l-1} <    \min \Lambda_l   \leq  \overline{r}.
\]
A  set-partition, $x$,
% $\Lambda$,
 can be represented  
 by an $(r,r)$-\emph{partition diagram}, 
%$\Lambda_d$, 
%REP: notation never used
 consisting of  $r$ distinguished    northern and southern points, which we call vertices.  We number the northern vertices from left to right by $1,2,\ldots, r$ and the southern vertices similarly by $\bar{1},\bar{2},\ldots, \bar{r}$ and connect two vertices by an edge if they belong to the same block and are adjacent in the total ordering 
  given by restriction of the above ordering to the given block. 
The second condition is imposed to pick a unique representative from the equivalence class of all diagrams having the same connected components. 
We shall move between set-partitions and their diagrams without further comment.
For example, the diagram of the set-partition $x$ given above is as follows:
 $$\scalefont{0.8}
 \begin{tikzpicture}[scale=0.6]
    \draw  (0,3.5) arc (180:360:1 and 0);
      \draw  (2,3.5) arc (180:360:2 and .5);
       \draw  (8,3.5) arc (180:360:1 and .5);
              \draw  (10,3.5) arc (180:360:1 and .5);
        \draw  (2,0) arc (180:360:3 and -1.5);
                \draw  (4,0) arc (180:360:1 and -0.5);
        \draw  (6,0) arc (180:360:2 and -1);      
                \draw  (10,0) arc (180:360:1 and -1);  
        \draw  (14,0) to [out=90,in=-90] (14,3.5);   
                \draw  (2,0)to [out=90,in=-90] (6,3.5);   
                                \draw  (4,0) to [out=90,in=-90] (12,3.5);   
  \foreach \x in {0,2,...,16}
        \fill[white](\x,3.5) circle (10pt);   
      \draw (4,3.5) node {$ \encircle{3}  $};   
                  \draw (2,3.5) node {$\encircle{2}$}; 
                           \draw (0,3.5) node {$\encircle{1}$}; 
         \draw (6,3.5) node {$\encircle{4}$};   
                  \draw (8,3.5) node {$\encircle{5}$};   
                           \draw (10,3.5) node {$\encircle{6}$};    
         \draw (12,3.5) node {$\encircle{7}$};       \draw (14,3.5) node {$\encircle{8}$}; 
  \foreach \x in {0,2,...,16}
        \fill[white](\x,0) circle (10pt);   
      \draw (4,0) node {\encircle{$\overline{3}$}}; 
            \draw (2,0) node {\encircle{$\overline{2}$}}; 
       \draw (0,0) node {\encircle{$\overline{1}$}};      
             \draw (6,0) node {\encircle{$\overline{4}$}}; 
            \draw (8,0) node {\encircle{$\overline{5}$}}; 
       \draw (10,0) node {\encircle{$\overline{6}$}};    
             \draw (12,0) node {\encircle{$\overline{7}$}}; 
            \draw (14,0) node {\encircle{$\overline{8}$}};    
      %{$ \encircle{$\overline{3}$}$}};   
%                  \draw (2,0) node {$\encircle{\overline{2}}$}}; 
%                           \draw (0,0) node {$\encircle{\overline{1}}$}}; 
%         \draw (6,0) node {$\encircle{\overline{4}}$}};   
%                  \draw (8,0) node {$\encircle{\overline{5}}$}};   
%                           \draw (10,0) node {$\encircle{\overline{6}}$}};    
%         \draw (12,0) node {$\encircle{\overline{7}}$}};       \draw (14,0) node {$\encircle{\overline{8}}$}}; 
          
%    \draw[rounded corners=15pt]
%  (-0.75,2.75) rectangle ++(7.5,1.5);
%   \draw[rounded corners=15pt]
%  (7.25,2.75) rectangle ++(9.5,1.5);
     \end{tikzpicture}.
$$

We can generalise this definition to %$(r,m)$-partition diagrams as diagrams representing set-partitions of $\{1, \ldots , r, \bar{1}, \ldots, \bar{m}\}$ in the obvious way.   
$(m,r)$-partition diagrams as diagrams representing set-partitions of $\{1, \ldots , m, \bar{1}, \ldots, \bar{r}\}$ in the obvious way.  

We now consider a parameter  $\delta \in \CC$.  We define the product $x  y$ of two $(r,r)$-partition diagrams $x$
and $y$ using the concatenation of $x$ above $y$, where we identify
the southern vertices of $x$ with the northern vertices of $y$.   
If there are $t$ connected components consisting only of  middle vertices, then the product is set equal to $ \delta ^t$ times the 
$(r,r)$-partition diagram equivalent to the 
diagram with the middle components removed. 
We let $P_r(\delta)$ denote the complex vector space with basis given by  all set-partitions of  $\{1,2,\ldots, r, \bar{1},\bar{2}, \ldots, \bar{r}\} $  
and with multiplication given by  linearly extending  the multiplication of diagrams.   
An example of the multiplication is given in \cref{hereitissss}. 
\begin{figure}[ht!]
 $$\scalefont{0.8}
\begin{minipage}{5.8cm} \begin{tikzpicture}[scale=0.6]

\draw[thick,densely dotted] (0,-1.1)--++(90:1.1);
\draw[thick,densely dotted] (2,-1.1)--++(90:1.1);
\draw[thick,densely dotted] (4,-1.1)--++(90:1.1);
\draw[thick,densely dotted] (6,-1.1)--++(90:1.1);
\draw[thick,densely dotted] (8,-1.1)--++(90:1.1);

    \draw  (0,3.5) arc (180:360:1 and 0);
      \draw  (2,3.5) arc (180:360:2 and .5);
%       \draw  (8,3.5) arc (180:360:1 and .5);
%              \draw  (10,3.5) arc (180:360:1 and .5);
        \draw  (2,0) arc (180:360:3 and -1.5);
                \draw  (4,0) arc (180:360:1 and -0.5);
%        \draw  (6,0) arc (180:360:2 and -1);      
%                \draw  (10,0) arc (180:360:1 and -1);  
                 \draw  (2,0)to [out=90,in=-90] (6,3.5);   
%                                \draw  (4,0) to [out=90,in=-90] (12,3.5);   

 \foreach \x in {0,2,...,16}
        \fill[white](\x,3.5) circle (10pt);   
      \draw (4,3.5) node {$ \encircle{3}  $};   
                  \draw (2,3.5) node {$\encircle{2}$}; 
                           \draw (0,3.5) node {$\encircle{1}$}; 
         \draw (6,3.5) node {$\encircle{4}$};   
                  \draw (8,3.5) node {$\encircle{5}$};   
%                           \draw (10,3.5) node {$\encircle{6}$};    
   \foreach \x in {0,2,...,16}
        \fill[white](\x,0) circle (10pt);   
      \draw (4,0) node {\encircle{$\overline{3}$}}; 
            \draw (2,0) node {\encircle{$\overline{2}$}}; 
       \draw (0,0) node {\encircle{$\overline{1}$}};      
             \draw (6,0) node {\encircle{$\overline{4}$}}; 
            \draw (8,0) node {\encircle{$\overline{5}$}}; 
%       \draw (10,0) node {\encircle{$\overline{6}$}};    

       \draw  (2,-1.1)--++(-90:3.75);
    \draw  (0,-1.1) arc (180:360:1 and 0);
%      \draw  (2,-1.1) arc (180:360:2 and .5);
%       \draw  (8,-1.1) arc (180:360:1 and .5);
%              \draw  (10,-1.1) arc (180:360:1 and .5);
        \draw  (2,-3.75+0.25-1.1) arc (180:360:3 and -1.5);
                \draw  (4,-3.75+0.25-1.1) arc (180:360:1 and -0.5);
%        \draw  (6,-3.75+0.25-1.1) arc (180:360:2 and -1);      
%                \draw  (10,-3.75+0.25-1.1) arc (180:360:1 and -1);  
%                 \draw  (2,-3.75+0.25-1.1)to [out=90,in=-90] (6,-1.1);   
%                                \draw  (4,-3.75+0.25-1.1) to [out=90,in=-90] (12,-1.1);   

 \foreach \x in {0,2,...,16}
        \fill[white](\x,-1.1) circle (10pt);   
      \draw (4,-1.1) node {$ \encircle{3}  $};   
                  \draw (2,-1.1) node {$\encircle{2}$}; 
                           \draw (0,-1.1) node {$\encircle{1}$}; 
         \draw (6,-1.1) node {$\encircle{4}$};   
                  \draw (8,-1.1) node {$\encircle{5}$};   
%                           \draw (10,-1.1) node {$\encircle{6}$};    
   \foreach \x in {0,2,...,16}
        \fill[white](\x,-3.75+0.25-1.1) circle (10pt);   
      \draw (4,-3.75+0.25-1.1) node {\encircle{$\overline{3}$}}; 
            \draw (2,-3.75+0.25-1.1) node {\encircle{$\overline{2}$}}; 
       \draw (0,-3.75+0.25-1.1) node {\encircle{$\overline{1}$}};      
             \draw (6,-3.75+0.25-1.1) node {\encircle{$\overline{4}$}}; 
            \draw (8,-3.75+0.25-1.1) node {\encircle{$\overline{5}$}};

     \end{tikzpicture}\end{minipage}
     =
\; \; \delta \times \;\;\;
  \begin{minipage}{5.8cm} \begin{tikzpicture}[scale=0.6]
    \draw  (0,3.5) arc (180:360:1 and 0);
      \draw  (2,3.5) arc (180:360:2 and .5);
%       \draw  (8,3.5) arc (180:360:1 and .5);
%              \draw  (10,3.5) arc (180:360:1 and .5);
        \draw  (2,0) arc (180:360:3 and -1.5);
                \draw  (4,0) arc (180:360:1 and -0.5);
%        \draw  (6,0) arc (180:360:2 and -1);      
%                \draw  (10,0) arc (180:360:1 and -1);  
                 \draw  (2,0)to [out=90,in=-90] (6,3.5);   
%                                \draw  (4,0) to [out=90,in=-90] (12,3.5);   

 \foreach \x in {0,2,...,16}
        \fill[white](\x,3.5) circle (10pt);   
      \draw (4,3.5) node {$ \encircle{3}  $};   
                  \draw (2,3.5) node {$\encircle{2}$}; 
                           \draw (0,3.5) node {$\encircle{1}$}; 
         \draw (6,3.5) node {$\encircle{4}$};   
                  \draw (8,3.5) node {$\encircle{5}$};   
%                           \draw (10,3.5) node {$\encircle{6}$};    
   \foreach \x in {0,2,...,16}
        \fill[white](\x,0) circle (10pt);   
      \draw (4,0) node {\encircle{$\overline{3}$}}; 
            \draw (2,0) node {\encircle{$\overline{2}$}}; 
       \draw (0,0) node {\encircle{$\overline{1}$}};      
             \draw (6,0) node {\encircle{$\overline{4}$}}; 
            \draw (8,0) node {\encircle{$\overline{5}$}}; 
%       \draw (10,0) node {\encircle{$\overline{6}$}};    
  
     \end{tikzpicture}\end{minipage}
$$
\caption{An example of the multiplication in $P_5(\delta)$. }
\label{hereitissss}
\end{figure}

 We set 
 $$  
\pgen _1=\{
\{1\}  ,
  \{2,\overline{2}\} , \dots  \{r,\overline{r}\}
  ,\{\overline1\}
\}, \quad
\pgen _{1,2}=\{
\{1,2,\overline{1},\overline2\} ,
  \{3,\overline{3}\} , \dots  \{r,\overline{r}\}
\}
$$ 
 and  we recall that the 
   Coxeter generators  for the symmetric group
 $\sgen _{i,i+1}$ for 
  $1\leq i<   r$  can be thought of  as the set-partitions 
  $$\sgen _{i,i+1}
=\{
\{1,\overline{1}\} 
, \dots,  \{i-1,\overline{i-1}\} , \{i,\overline{i+1}\}, 
 \{i+1,\overline{i}\},
 \{i+2,\overline{i+2}\},\dots , \{r,\overline{r}\}
\}.$$ 
We set $\pgen _{k}=
\sgen _{k,k-1}\dots \sgen _{1,2} \pgen _{1}\sgen _{1,2} \dots \sgen _{k,k-1}$.  Some of these diagrams are pictured in \cref{generators}.

\begin{prop}[{\cite[Proposition 1]{mar1}}]
The algebra $P_r(\delta)$ is generated by   the set-partitions 
$\pgen _1, \pgen _{1,2}$  and $\sgen _{i,i+1}$ for $1\leq i <r$.  
   \end{prop}

\begin{figure}[h!]
  $$ 
 \scalefont{0.8}
\begin{minipage}{80mm}\begin{tikzpicture}[scale=0.6]
   \draw  (2,3.5) --(2,0);  \draw  (4,3.5) --(4,0);
    \draw  (8,3.5) --(8,0); 
  \foreach \x in {0,2,4,8}
        \fill[white](\x,3.5) circle (10pt);   
      \draw (4,3.5) node {$ \encircle{3}  $};   
                  \draw (2,3.5) node {$\encircle{2}$}; 
                           \draw (0,3.5) node {$\encircle{1}$}; 
     \draw (6,0) node {$\dots $};          \draw (6,3.5) node {$\dots $};                             \draw (8,3.5) node {$\encircle{$r$}$};    
   \foreach \x in {0,2,4,8}
        \fill[white](\x,0) circle (10pt);   
%                    \draw (14,0) node {\encircle{$\overline{8}$}};    
      \draw (4,0) node {  \encircle{$\overline{3}$}};   
                 \draw (2,0) node { \encircle{$\overline{2}$}}; 
                           \draw (0,0) node {\encircle{$\overline{1}$}}; 
                            \draw (8,0) node {\encircle{$\overline{r}$}};    
      \end{tikzpicture}
      \end{minipage}
    \begin{minipage}{60mm}  \begin{tikzpicture}[scale=0.6]
   \draw  (2,3.5) --(2,0);  \draw  (4,3.5) --(4,0);
    \draw  (8,3.5) --(8,0); 
        \draw  (0,3.5) --(-2,0); 
        \draw  (-2,3.5) arc (180:360:1 and 0.5);
                \draw  (-2,0) arc (180:360:1 and -0.5);
  \foreach \x in {-2,0,2,4,8}
        \fill[white](\x,3.5) circle (10pt);   
      \draw (4,3.5) node {$ \encircle{4}  $};   
                  \draw (2,3.5) node {$\encircle{3}$}; 
                           \draw (-2,3.5) node {$\encircle{1}$};                            \draw (0,3.5) node {$\encircle{2}$}; 
     \draw (6,0) node {$\dots $};          \draw (6,3.5) node {$\dots $};                             \draw (8,3.5) node {$\encircle{$r$}$};    
   \foreach \x in {-2,0,2,4,8}
        \fill[white](\x,0) circle (10pt);   
      \draw (4,0) node {\encircle{$\overline{4}  $}};   
                 \draw (2,0) node {\encircle{$\overline{3}$}}; 
                           \draw (-2,0) node {\encircle{$\overline{1}$}}; 
                           \draw (0,0) node {\encircle{$\overline{2}$}}; 
                            \draw (8,0) node {\encircle{$\overline{r}$}};    
      \end{tikzpicture}      \end{minipage}
 $$
\caption{The non-Coxeter generators, $\pgen _1$ and  $\pgen _{1,2}$,  of $P_r(\delta)$, respectively. }
\label{generators}
\end{figure}

  \subsection{Filtration by propagating blocks and standard modules}\label{2.2} 
A block of a set-partition of $\{1,2,\ldots, r, \bar{1},\bar{2}, \ldots, \bar{r}\}$ is called \emph{propagating} if the block contains both northern and southern vertices. In the example from the previous subsection, $x$ has three propagating blocks.
   Note that the multiplication in $P_r(\delta) $ cannot increase the number of propagating blocks.  More precisely, if $x$,  respectively $y$, is a partition diagram with $p_x$, respectively $p_y$, propagating blocks then $x y$ is equal to $ \delta ^t z$ for some $t\geq0$ and some partition diagram $z$ with $p_z$ propagating blocks, where $p_z\leq \min\{p_x, p_y\}$.  This gives a filtration of the algebra $P_r(\delta) $ by the number of propagating blocks.   Suppose now that $\delta \ne 0$. Then this filtration can be realised using the idempotents $\egen _k= { \delta ^{-k}}\pgen _1\pgen _2\ldots \pgen _k$ ($1\leq k\leq r$).  
  We have  
$$\{0\} \subset
 P_r(\delta)  \egen _r P_r(\delta)  \subset P_r(\delta)  \egen _{r-1}P_r(\delta)  \subset \ldots \subset  P_r(\delta) \egen _1P_r(\delta)  \subset P_r(\delta) .				$$
It is easy to see that
\begin{equation}\label{bob}
\egen _1P_r(\delta)  \egen _1 \cong  P_{r-1}(\delta ), 
 \end{equation}
and that this generalises to $P_{r-k}(\delta )\cong \egen _{k}P_r(\delta)  \egen _{k}$ for $1\leq   k  \leq r$. 
Moreover, $P_r(\delta) \egen _1P_r(\delta) $ is the span of all $(r,r)$-partition diagrams 
with at most $r-1$ propagating blocks and hence we have
\begin{align}\label{bob2}
 P_r(\delta)  / (P_r(\delta)  \egen _1P_r(\delta) ) \cong  \mathbb{C}    \Sym _r .
\end{align}
Using equation (\ref{bob2}), we see that any $\mathbb{C}\Sym _r$-module can be \emph{inflated} to a $P_r(\delta) $-module. 
 We also obtain from equations (\ref{bob}) and (\ref{bob2}), by induction, that the simple $P_r(\delta) $-modules are indexed by the set %$\Lambda_{\leq r}=\bigcup_{0\leq i\leq r}\Lambda_i$, 
  $$\mathscr{P}(\leq r)=\bigcup_{0\leq i\leq r}\mathscr{P}(i).$$
%where $\mathscr{P}(r)$ denotes the set of (integer) partitions of $r$, the indexing set for the simple $\mathbb{C}\Sym _r$-modules. 
%
%omitted as already defined
%.  
 For any $\nu \in \mathscr{P}(\leq r)$ with $\nu \vdash r-k$, we define the \emph{standard} (right) $P_r(\delta) $-module, $\Delta_{r, \delta }(\nu)$, by
\begin{equation}\label{cell}
\Delta_{r, \delta }(\nu)= {\sf S}(\nu)\otimes_{P_{r-k}(\delta)} \egen _{k} P_r(\delta) ,
\end{equation}
%REP added
where the (right) Specht module ${\sf S}(\nu)$  for $\CC \Sym _{r-k}$
 becomes a $P_{r-k}(\delta )$-module by inflation, and  we have identified $P_{r-k}(\delta )$ with $\egen _{k}P_r(\delta) \egen _{k}$ using the isomorphism given in equation (\ref{bob}) providing the left action on
 $\egen _{k} P_r(\delta) $.  The action of $P_r(\delta) $ on the standard module $\Delta_{r, \delta }(\nu)$ is given by right multiplication. As $P_{r-k}(\delta )$-modules, we have that  \begin{equation}\label{annihilate}
\Delta_{r, \delta }(\nu)\egen _k   \cong    \Delta_{r-k, \delta}(\nu) 
\end{equation}
  if $|\nu| \leq r-k$ and is zero otherwise.    

 It is known \cite{msannular} that $P_r(\delta) $ is semisimple if and only if 
\begin{equation}\label{ss}
\delta \not\in \{0,1,\ldots, 2r-2\}
\end{equation}
 and in this case the set $\{\Delta_{r, \delta }(\nu):\nu \in  \mathscr{P}(\leq r) \}$ forms a complete set of non-isomorphic simple $P_r(\delta) $-modules. 
In particular, if  $P_r(\delta) $  is semisimple then so is  $P_k(\delta) $ for $k \leq r$.
 More generally, provided $\delta  \ne 0$,  the standard module $ \Delta_{r, \delta }(\nu)$ has a simple head, which we denote by $L_{r, \delta }(\nu)$, and moreover  $$\{L_{r, \delta }(\nu):\nu \in \mathscr{P}(\leq r) \}$$ forms a complete set of non-isomorphic simple  $P_r(\delta) $-modules.
 
We now give an explicit description of the standard modules which follows directly from (\ref{cell}). We set $V(r-k,r)$ to be the span of all $(r-k,r)$-partition diagrams (that is, having $r-k$ northern and $r$ southern vertices) having precisely $(r-k)$ propagating blocks. This has the natural structure of a $(\Sym _{r-k}, P_r(\delta) )$-bimodule. It is easy to see that, as vector spaces, we have
$$\Delta_{r, \delta }(\nu) \cong {\sf S}(\nu) \otimes_{\Sym _{r-k}} V(r-k,r).$$ 
The action of $P_r(\delta) $ is given as follows. Let $v$ be a partition diagram in $V(r-k,r)$, $x\in {\sf S}(\nu)$ and $d$ be an $(r,r)$-partition diagram. Concatenate  $v$ above $d$ to get $(\delta)^t v'$ for some $(r-k,r)$-partition diagram $v'$ and some non-negative integer $t$. If $v'$ has fewer than $(r-k)$ propagating blocks then we set
$(x\otimes v)d=0$.
Otherwise we set $(x\otimes v)d= \delta ^t x\otimes v'$. 
 Note that if $\nu \vdash r$, then we have
\begin{equation}\label{annihilate2}
\Delta_{r, \delta }(\nu)\cong {\sf S}(\nu)\otimes_{\Sym _r}V(r,r) = {\sf S}(\nu),
\end{equation}
 viewed as a $P_r(\delta) $-module via equation  (\ref{bob2}).
%REP added
A special case which will be important later is $\Delta_{r, \delta }(\emptyset)$ which can be viewed as the span of all set-partitions of $\{1,2,\ldots, r\}$ with the natural concatenation action.

\begin{rmk}
We refer the interested reader
 to \cite{BDO15} for explicit diagrammatic  calculations 
 using the modules constructed in \cref{annihilate2}.   
\end{rmk}

%%%%%%%%%%%%%%%%%%%%%%%%%%%%%%%%%%%%%%%%%%%%%%%%%%%%%%%%%%%%%%%%%%%%%%%%%%%%%%%%%%
\section{The fixed depth Foulkes poset}
\label{sec:poset}
 
% \subsection{Partial orderings on embedded pairs of set partitions}\label{partial}
%    We now introduce a new basis of the stable ramified Foulkes module. We shall do this in terms of the M\"obius and zeta functions of a partially ordered set.
 For $r\in \mathbb{N}$, we consider the set $\{1,2,\ldots, r\}$ with the usual total ordering.
 %$ 1<2<\dots <r.  $  
 Given  $\Lambda$   a set-partition of $\{1,2,\ldots, r\}$ and $a,b \in \{1,\dots, r\}$, we write $a \sim_\Lambda b$ if $a$ and $b$ belong to the same block of $\Lambda$, 
%REP added 
 and we denote the number of blocks of $\Lambda$  by $\ell(\Lambda)$.
For example, if  
$$\Lambda=\{\{1, 2, 4\}, \{3\}, \{5, 7, 8  \}, \{6,9\}\}$$
 then $1\sim_\Lambda 4$, $6\sim_\Lambda 9$ and $\ell(\Lambda)=4$.  
We represent this diagrammatically as follows. 
$$[\Lambda ]=\begin{minipage}{12cm}
\scalefont{0.8}
 \begin{tikzpicture}[scale=0.7]
    \draw  (0,3.5) arc (180:360:1 and 0);
      \draw  (2,3.5) arc (180:360:2 and .5);
   \draw  (12,3.5) arc (180:360:1 and 0);
      \draw  (8,3.5) arc (180:360:2 and .5);
        \draw  (10,3.5) arc (180:360:3 and -.5);

  \foreach \x in {0,2,...,16}
        \fill[white](\x,3.5) circle (10pt);   
      \draw (4,3.5) node {$ \encircle{3}  $};   
                  \draw (2,3.5) node {$\encircle{2}$}; 
                           \draw (0,3.5) node {$\encircle{1}$}; 
         \draw (6,3.5) node {$\encircle{4}$};   
                  \draw (8,3.5) node {$\encircle{5}$};   
                           \draw (10,3.5) node {$\encircle{6}$};    
         \draw (12,3.5) node {$\encircle{7}$};       \draw (14,3.5) node {$\encircle{8}$}; 
         \draw (16,3.5) node {$\encircle{9}$}; 
       \draw[rounded corners=15pt,white]
  (-0.75,2.75) rectangle ++(7.5,1.5);
   \draw[rounded corners=15pt,white]
  (7.25,2.75) rectangle ++(9.5,1.5);         \end{tikzpicture} \end{minipage}
$$ 
% We represent a set-partition, $\Lambda$, by a diagram which we denote by $[\Lambda]$.  
Given $\Lambda, \Lambda'$ two set-partitions of $\{1,2,\ldots, r\}$, we write $\Lambda\subseteq \Lambda'$ 
if  $a \sim_\Lambda b$ implies $a \sim_{\Lambda'} b$  for any $a,b \in \{1,\dots, r\}$.  
We let ${\sf F}^r$ denote the  set consisting of all pairs 
$(\Lambda,\Lambda')$ of set-partitions of $\{1,\dots,r\}$ such that  $\Lambda\subseteq \Lambda'$.
%, that is pairs $(\Lambda,\Lambda')$ such that $\Lambda$ .   
 
We equip ${\sf F}^r$  with a partial ordering $\subseteq$ as follows:  
given 
$(\Lambda, \Lambda'), (\Gamma, \Gamma')\in {\sf F}^r$, we write 
$(\Lambda, \Lambda')
\subseteq
(\Gamma, \Gamma') $ if 
$\Lambda\subseteq \Gamma $   
and 
$\Lambda'\subseteq \Gamma' $.  In this case we say that $
(\Lambda, \Lambda')$ is a refinement of $(\Gamma, \Gamma')$ or that 
$(\Gamma, \Gamma')$  is a coarsening of $
(\Lambda, \Lambda')$. 
Given $(\Lambda,\Lambda') \in {\sf F}^r$ we associate a diagram,  $[\Lambda,\Lambda']$, in the obvious fashion, recording $\Lambda$ as above then grouping together in a `bubble' those parts of $\Lambda$ which together form a part of $\Lambda'$.  
%We denote the resulting diagram by $[\Lambda,\Lambda']$.  
For example if 
$$(\Lambda,\Lambda')=\{\{1, 2, 4\}, \{3\}, \{5, 7, 8 \}, \{6,9\}\}, 
\{\{1, 2, 3, 4\},  \{5, 6, 7, 8,9\}\} $$
then clearly  $\Lambda\subseteq \Lambda'$ and we represent this pair diagrammatically as follows. 
$$[\Lambda,\Lambda']=\begin{minipage}{12cm}
\scalefont{0.7}
 \begin{tikzpicture}[scale=0.7]
    \draw  (0,3.5) arc (180:360:1 and 0);
      \draw  (2,3.5) arc (180:360:2 and .5);
   \draw  (12,3.5) arc (180:360:1 and 0);
      \draw  (8,3.5) arc (180:360:2 and .5);
        \draw  (10,3.5) arc (180:360:3 and -.5);

  \foreach \x in {0,2,...,16}
        \fill[white](\x,3.5) circle (10pt);   
      \draw (4,3.5) node {$ \encircle{3}  $};   
                  \draw (2,3.5) node {$\encircle{2}$}; 
                           \draw (0,3.5) node {$\encircle{1}$}; 
         \draw (6,3.5) node {$\encircle{4}$};   
                  \draw (8,3.5) node {$\encircle{5}$};   
                           \draw (10,3.5) node {$\encircle{6}$};    
         \draw (12,3.5) node {$\encircle{7}$};       \draw (14,3.5) node {$\encircle{8}$}; 
         \draw (16,3.5) node {$\encircle{9}$}; 
    \draw[rounded corners=15pt]
  (-0.75,2.75) rectangle ++(7.5,1.5);
   \draw[rounded corners=15pt]
  (7.25,2.75) rectangle ++(9.5,1.5);      \end{tikzpicture} \end{minipage}
$$

\begin{rmk}\label{usefulconvention}
Continuing with the  convention of \cref{sec2}, we order the the subsets in $\Lambda = \{\Lambda_1,
\dots, \Lambda_l\}$ by \emph{increasing minima}, so that
\[
 1 =   \min \Lambda_1 < \min \Lambda_2 < \cdots < \min \Lambda_{l-1} <
   \min \Lambda_l
  \leq  r.
\]

\end{rmk}

  %search

\begin{figure} $$\scalefont{0.7}\begin{tikzpicture}[scale=0.64]

\draw(-1,0)--(-10,-3); 
\draw(-1,0)--(-1,-3); 
\draw(-1,0)--(8,-3); 

\draw(-10,-6)--(-10,-3); 
\draw(-4,-6)--(-1,-3); 
\draw(8,-6)--(8,-3); 

\draw(2,-6)--(-10,-3); 
\draw(2,-6)--(-1,-3); 
\draw(2,-6)--(8,-3);

\draw(2,-6)--(-10,-9); 
\draw(2,-6)--(-4,-9); 
\draw(2,-6)--(8,-9);

\draw(-10,-6)--(-10,-9); 
\draw(-4,-6)--(-4,-9); 
\draw(8,-6)--(8,-9); 

\draw(-10,-9)--(-1,-12); 
\draw(-4,-9)--(-1,-12); 
\draw(8,-9)--(-1,-12); 

\draw[fill,white](-10,-6) circle (5pt);

\draw[fill,white](-1,0)  circle (18pt);  

\draw[fill,white](-10,-3)  circle (18pt); 
\draw[fill,white](-1,-3)  circle (18pt);    
\draw[fill,white](8,-3)  circle (18pt);

\draw[fill,white](-4,-6)  circle (18pt);  
\draw[fill,white](2,-6)  circle (18pt);    
\draw[fill,white](8,-6)  circle (18pt);

\draw[fill,white](-10,-9)  circle (18pt);  
\draw[fill,white](-1,-9)  circle (18pt);     
\draw[fill,white](8,-9)  circle (18pt);      

%\draw[fill,white](-1,-12)  circle (30pt);    

%min elt 
\path(-1,-12.2) coordinate (origin);  
      \draw[fill=white]   (origin)  circle (0.7);
    \foreach \x in {-2,0,2}
        \fill[white](origin)--++(\x,0) circle (10pt);   
                   \path (origin)--++(0,0) node {$\encircle{2}$}; 
                           \path (origin)--++(-2,0) node {$\encircle{1}$}; 
                                \path (origin)--++(2,0) node {$\encircle{3}$}; 
       \path   (origin)--++(-2,0) coordinate (origin3);  
       \path   (origin)--++(2,0) coordinate (origin4);
 
   \draw    (origin3)  circle (0.7);
   \draw    (origin4)  circle (0.7);

%left bottom
\path(-10,-9) coordinate (origin);   \path   (origin)--++(-2.5,-0.75) coordinate (origin2);  \fill[fill=white,rounded corners=10pt]
  (origin2)  rectangle ++(5,1.5);

%  \draw(origin)--++(-2,0);
    \foreach \x in {-2,0,2}
        \fill[white](origin)--++(\x,0) circle (10pt);   
                   \path (origin)--++(0,0) node {$\encircle{2}$}; 
                           \path (origin)--++(-2,0) node {$\encircle{1}$}; 
                                \path (origin)--++(2,0) node {$\encircle{3}$}; 
    \path   (origin)--++(2.5,-0.75) coordinate (origin2);
     \path   (origin)--++(-2,0) coordinate (origin3);
       \draw[rounded corners=10pt]
  (origin2)  rectangle ++(-3,1.5);

   \draw 
  (origin3)  circle (0.7);

%bottom middle
\path(-4,-9) coordinate (origin);  
 \path   (origin)--++(-2.5,-0.75) coordinate (origin2);
%       \fill[magenta,rounded corners=10pt]   (origin2)  rectangle ++(5,1.5);

%Bottom 

 \draw[draw,use Hobby shortcut,closed=true,fill=white]
 (-6.5,-9)..(-6.5+.25,-9+.3)..(-6.5+1.8,-0.65-9)..(-6.5+5-1.8,-0.65-9)..(-6.5+4.75,.3-9)..
 (-6.5+5,0-9)
 ..(-6.5+4.5,-1-9)..(-6.5+2.5,-1.1-9)..(-6.5+0.5,-1-9)..(-6.5,0-9);

       \draw[rounded corners=10pt,fill=white]
  (origin)  circle (0.7cm);

    \foreach \x in {-2,0,2}
        \fill[white](origin)--++(\x,0) circle (10pt);   
                   \path (origin)--++(0,0) node {$\encircle{2}$}; 
                           \path (origin)--++(-2,0) node {$\encircle{1}$}; 
                                \path (origin)--++(2,0) node {$\encircle{3}$}; 
    \path   (origin)--++(-2.5,-0.75) coordinate (origin2);
 
%  \path[draw,use Hobby shortcut,closed=true]
% (-6.5,-6)..(-6.5+.25,-6+.3)..(-6.5+1.8,-0.65-6)..(-6.5+5-1.8,-0.65-6)..(-6.5+4.75,.3-6)..
% (-6.5+5,0-6)
% ..(-6.5+4.5,-1-6)..(-6.5+2.5,-1.1-6)..(-6.5+0.5,-1-6)..(-6.5,0-6);

%bottom right
\path(8,-9)  coordinate (origin);   \path   (origin)--++(-2.5,-0.75) coordinate (origin2);
       \fill[fill=white,rounded corners=10pt]
  (origin2)  rectangle ++(5,1.5);

%  \draw(origin)--++(-2,0);
    \foreach \x in {-2,0,2}
        \fill[white](origin)--++(\x,0) circle (10pt);   
                   \path (origin)--++(0,0) node {$\encircle{2}$}; 
                           \path (origin)--++(-2,0) node {$\encircle{1}$}; 
                                \path (origin)--++(2,0) node {$\encircle{3}$}; 
    \path   (origin)--++(-2.5,-0.75) coordinate (origin2);
     \path   (origin)--++(2,0) coordinate (origin3);
       \draw[rounded corners=10pt]
  (origin2)  rectangle ++(3,1.5);

   \draw 
  (origin3)  circle (0.7);

%middle left
\path(-10,-6)  coordinate (origin);   \path   (origin)--++(-2.5,-0.75) coordinate (origin2);
       \fill[fill=white,rounded corners=10pt]
  (origin2)  rectangle ++(5,1.5);

  \draw(origin)--++(2,0);
    \foreach \x in {-2,0,2}
        \fill[white](origin)--++(\x,0) circle (10pt);   
                   \path (origin)--++(0,0) node {$\encircle{2}$}; 
                           \path (origin)--++(-2,0) node {$\encircle{1}$}; 
                                \path (origin)--++(2,0) node {$\encircle{3}$}; 
    \path   (origin)--++(-2.5,-0.75) coordinate (origin2);
%       \draw[rounded corners=10pt]
%  (origin2)  rectangle ++(5,1.5);
  \path   (origin)--++(2.5,-0.75) coordinate (origin2);
     \path   (origin)--++(-2,0) coordinate (origin3);
       \draw[rounded corners=10pt]
  (origin2)  rectangle ++(-3,1.5);

   \draw 
  (origin3)  circle (0.7);

%middle 2nd left
\path(-4,-6) coordinate (origin);   \path   (origin)--++(-2.5,-0.75) coordinate (origin2);
%       \fill[white,rounded corners=10pt]
%  (origin2)  rectangle ++(5,1.5);

%MIDDLE

 \draw[draw,use Hobby shortcut,closed=true,fill=white]
 (-6.5,-5.9)..(-6.5+.25,-5.9+.3)..(-6.5+1.8,-0.65-5.9)..(-6.5+5-1.8,-0.65-5.9)..(-6.5+4.75,.3-5.9)..
 (-6.5+5,0-5.9)
 ..(-6.5+4.5,-1-5.9)..(-6.5+2.5,-1.1-5.9)..(-6.5+0.5,-1-5.9)..(-6.5,0-5.9);

  \path(origin)--++(-2,0) coordinate (origin2);
    \path(origin)--++(2,0) coordinate (origin3);
  \draw(origin2) to  [out=-50,in=-130]  (origin3);   
    \foreach \x in {-2,0,2}
        \fill[white](origin)--++(\x,0) circle (10pt);   
        \fill[white](origin)--++(0,-0.5) circle (10pt);           
                   \path (origin)--++(0,0) node {$\encircle{2}$}; 
                           \path (origin)--++(-2,0) node {$\encircle{1}$}; 
                                \path (origin)--++(2,0) node {$\encircle{3}$}; 
    \path   (origin)--++(-2.5,-0.75) coordinate (origin2);
       \draw[rounded corners=10pt]
  (origin)  circle (0.7cm);

 \draw[draw,use Hobby shortcut,closed=true]
 (-6.5,-5.9)..(-6.5+.25,-5.9+.3)..(-6.5+1.8,-0.65-5.9)..(-6.5+5-1.8,-0.65-5.9)..(-6.5+4.75,.3-5.9)..
 (-6.5+5,0-5.9)
 ..(-6.5+4.5,-1-5.9)..(-6.5+2.5,-1.1-5.9)..(-6.5+0.5,-1-5.9)..(-6.5,0-5.9);

%  \path[draw,use Hobby shortcut,closed=true]
% (-6.5,-6)..(-6.5+.25,-6+.3)..(-6.5+1.8,-0.65-6)..(-6.5+5-1.8,-0.65-6)..(-6.5+4.75,.3-6)..
% (-6.5+5,0-6)
% ..(-6.5+4.5,-1-6)..(-6.5+2.5,-1.1-6)..(-6.5+0.5,-1-6)..(-6.5,0-6);

%   \path[draw,use Hobby shortcut,closed=true]
% (-6.5,-6)..(-6.25,-6+.3)..(-6+1.8,-0.65-6)..(-6+4.2,-0.65-6)..(-6+6.25,.3-6)..
% (-6+6.5,0-6)
% ..(-6+5,-1-6)..(-6+3,-1.1-6)..(-6+1,-1-6)..(-6+-0.5,0-6);

%middle 2nd right
\path(8,-6) coordinate (origin);   \path   (origin)--++(-2.5,-0.75) coordinate (origin2);
       \fill[fill=white,rounded corners=10pt]
  (origin2)  rectangle ++(5,1.5);

  \draw(origin)--++(-2,0);
    \foreach \x in {-2,0,2}
        \fill[white](origin)--++(\x,0) circle (10pt);   
                   \path (origin)--++(0,0) node {$\encircle{2}$}; 
                           \path (origin)--++(-2,0) node {$\encircle{1}$}; 
                                \path (origin)--++(2,0) node {$\encircle{3}$}; 
%    \path   (origin)--++(-2.5,-0.75) coordinate (origin2);
%       \draw[rounded corners=10pt]
%  (origin2)  rectangle ++(5,1.5);
  \path   (origin)--++(-2.5,-0.75) coordinate (origin2);
     \path   (origin)--++(2,0) coordinate (origin3);
       \draw[rounded corners=10pt]
  (origin2)  rectangle ++(3,1.5);

   \draw 
  (origin3)  circle (0.7);

%middle right
\path(2,-6) coordinate (origin);   \path   (origin)--++(-2.5,-0.75) coordinate (origin2);
       \draw[fill=white,rounded corners=10pt]
  (origin2)  rectangle ++(5,1.5);

%  \draw(origin)--++(-2,0);
    \foreach \x in {-2,0,2}
        \fill[white](origin)--++(\x,0) circle (10pt);   
                   \path (origin)--++(0,0) node {$\encircle{2}$}; 
                           \path (origin)--++(-2,0) node {$\encircle{1}$}; 
                                \path (origin)--++(2,0) node {$\encircle{3}$}; 
    \path   (origin)--++(-2.5,-0.75) coordinate (origin2);
       \draw[rounded corners=10pt]
  (origin2)  rectangle ++(5,1.5);

%top left
\path(-10,-3) coordinate (origin);   \path   (origin)--++(-2.5,-0.75) coordinate (origin2);
       \draw[fill=white,rounded corners=10pt]
  (origin2)  rectangle ++(5,1.5);

  \draw(origin)--++(2,0);
    \foreach \x in {-2,0,2}
        \fill[white](origin)--++(\x,0) circle (10pt);   
                   \path (origin)--++(0,0) node {$\encircle{2}$}; 
                           \path (origin)--++(-2,0) node {$\encircle{1}$}; 
                                \path (origin)--++(2,0) node {$\encircle{3}$}; 
    \path   (origin)--++(-2.5,-0.75) coordinate (origin2);
%       \draw[rounded corners=10pt]
%  (origin2)  rectangle ++(5,1.5);

%top middle
\path(-1,-3) coordinate (origin);   \path   (origin)--++(-2.5,-0.75) coordinate (origin2);
       \draw[fill=white,rounded corners=10pt]
  (origin2)  rectangle ++(5,1.5);

  \path(origin)--++(-2,0) coordinate (origin2);
    \path(origin)--++(2,0) coordinate (origin3);
  \draw(origin2) to  [out=-30,in=-150]  (origin3);   
    \foreach \x in {-2,0,2}
        \fill[white](origin)--++(\x,0) circle (10pt);   
                   \path (origin)--++(0,0) node {$\encircle{2}$}; 
                           \path (origin)--++(-2,0) node {$\encircle{1}$}; 
                                \path (origin)--++(2,0) node {$\encircle{3}$}; 
    \path   (origin)--++(-2.5,-0.75) coordinate (origin2);
       \draw[rounded corners=10pt]
  (origin2)  rectangle ++(5,1.5);

%top right
\path(8,-3) coordinate (origin);   \path   (origin)--++(-2.5,-0.75) coordinate (origin2);
       \draw[fill=white,rounded corners=10pt]
  (origin2)  rectangle ++(5,1.5);

  \draw(origin)--++(-2,0);
    \foreach \x in {-2,0,2}
        \fill[white](origin)--++(\x,0) circle (10pt);   
                   \path (origin)--++(0,0) node {$\encircle{2}$}; 
                           \path (origin)--++(-2,0) node {$\encircle{1}$}; 
                                \path (origin)--++(2,0) node {$\encircle{3}$}; 
    \path   (origin)--++(-2.5,-0.75) coordinate (origin2);
       \draw[rounded corners=10pt]
  (origin2)  rectangle ++(5,1.5);

%max elt
\path(-1,0) coordinate (origin);   \path   (origin)--++(-2.5,-0.75) coordinate (origin2);
       \draw[fill=white,rounded corners=10pt]
  (origin2)  rectangle ++(5,1.5);

  \draw(origin)--++(-2,0)--++(4,0);
    \foreach \x in {-2,0,2}
        \fill[white](origin)--++(\x,0) circle (10pt);   
                   \path (origin)--++(0,0) node {$\encircle{2}$}; 
                           \path (origin)--++(-2,0) node {$\encircle{1}$}; 
                                \path (origin)--++(2,0) node {$\encircle{3}$}; 
    \path   (origin)--++(-2.5,-0.75) coordinate (origin2);
       \draw[rounded corners=10pt]
  (origin2)  rectangle ++(5,1.5);
   
 \end{tikzpicture}
$$
\caption{The poset ${\sf F}^3$.  }
\label{r=3ex}
\end{figure}

%REP removed
%Given any partially ordered set, there exists an associated Mobius function/matrix (which simply records whether or not two elements are comparable in the partial ordering) and its inverse, the so-called $\zeta$ matrix.   We recall this construction now in the case of our poset ${\sf F}_{\infty,\infty}^r$.  

   \begin{defn}
For $m,n \in \N$, we let    ${\sf F}^r_{m,n}\subseteq  {\sf F}^r$ denote the sub-poset consisting of the   elements $(\Lambda,\Lambda')$ such that 
     $\ell(\Lambda') \le n$ and such that the number of blocks of $\Lambda$ contained within any single block of $\Lambda'$ is at most $m$.  
\end{defn}

%REP added
\begin{eg}
\label{newerererer} The subposet 
 ${\sf F}^{3}_{2,3}\subseteq {\sf F}^{3}$ contains all the elements shown in  \cref{r=3ex} except the leftmost diagram in \cref{newfig}.  
 The subposet 
 ${\sf F}^{3}_{3,2}\subseteq {\sf F}^{3}$ contains all the elements shown in  \cref{r=3ex} except the rightmost diagram in 
\cref{newfig}.

\begin{figure}[ht!]
$$
\begin{minipage}{3.5cm}\scalefont{0.7}\begin{tikzpicture}[scale=0.675]
\path(2,-6) coordinate (origin);   \path   (origin)--++(-2.5,-0.75) coordinate (origin2);
       \draw[fill=white,rounded corners=15pt]
  (origin2)  rectangle ++(5,1.5);

     \foreach \x in {-2,0,2}
        \fill[white](origin)--++(\x,0) circle (10pt);   
                   \path (origin)--++(0,0) node {$\encircle{2}$}; 
                           \path (origin)--++(-2,0) node {$\encircle{1}$}; 
                                \path (origin)--++(2,0) node {$\encircle{3}$}; 
    \path   (origin)--++(-2.5,-0.75) coordinate (origin2);
       \draw[rounded corners=15pt]
  (origin2)  rectangle ++(5,1.5);
     
 \end{tikzpicture}\end{minipage} 
\qquad \qquad   \qquad 
\begin{minipage}{3.85cm}\scalefont{0.7}\begin{tikzpicture}[scale=0.675]
\path(-1,-12.2) coordinate (origin);  

    \foreach \x in {-2,0,2}
        \fill[white](origin)--++(\x,0) circle (10pt);   
                   \path (origin)--++(0,0) node {$\encircle{2}$}; 
                           \path (origin)--++(-2,0) node {$\encircle{1}$}; 
                                \path (origin)--++(2,0) node {$\encircle{3}$}; 
       \path   (origin)--++(-2,0) coordinate (origin3);  \path   (origin)--++(2,0) coordinate (origin4);
      \draw   (origin)  circle (0.7);
   \draw    (origin3)  circle (0.7);
   \draw    (origin4)  circle (0.7);
 \end{tikzpicture} \end{minipage} 
$$
\caption{Two diagrams discussed in 
\cref{newerererer} which do not belong to certain subposets of 
${\sf F}^{3}$ (cross reference with \cref{r=3ex}).
}
\label{newfig}
\end{figure}
\end{eg}

  \section{Schur--Weyl duality} \label{sec3}

  Classical Schur--Weyl duality is the relationship between the general linear
and symmetric groups over tensor space. 
 The symmetric group acts on the right by permuting the factors. The general linear group  acts on the left by matrix multiplication on each factor. These two actions commute and 
each generates the full centraliser of the other.  
We can restrict the action of the general linear group to the subgroup of all  permutation matrices  and ask what algebra appears on the other side of the duality? The answer is the partition algebra.
Through this duality,   the partition algebra governs the stability phenomena of  the symmetric group   \cite{BDO15}.   For the purposes of this paper, we shall be interested in how this allows us to understand certain plethysm coefficients  via their  stabilities in this limit.

The purpose of this paper is  to study the  decomposition of the  Foulkes module $\ind_{\Sym _m\wr \Sym _n}^{\Sym _{mn}} \CC$.  
We shall attempt to do this via Schur--Weyl duality with the partition algebra. % $P_r(mn)$.  
 In order to see the Foulkes module within the setting of Schur--Weyl duality, we must consider the left action of $\Sym _{mn}$ 
 and the right action of $P_r(mn)$ 
 on     $(\mathbb{C}^{mn})^{\otimes r}$ (defined below). Here we have specialised  the parameter of the partition algebra to  $\delta=mn \in \NN$.

  Let $I(mn,r) = \{1, \dots, mn\}^r$ be the set of multi-indices. For a
given multi-index $i = (i_1, \dots, i_r) \in I(mn,r)$, we put $\elemvec_i =
\elemvec_{i_1} \otimes \cdots \otimes \elemvec_{i_r}$. Then $\{ \elemvec_i : i \in I(mn,r)
\}$ is a basis of tensor space $(\Bbbk^{mn})^{\otimes r}$ over $\Bbbk$.
 The action of the Weyl group $\W_{mn}$  on $(\Bbbk^{mn})^{\otimes r}$ is simply the restriction of the
diagonal action of $\GL_{mn}(\Bbbk)$.
 In particular, 
\begin{equation} \label{eqn:tensor_action}
 \sigma \cdot (\elemvec_{i_1} \otimes \cdots \elemvec_{i_r}) = \elemvec_{\sigma(i_1)} \otimes \cdots \elemvec_{\sigma(i_r)}
\end{equation} 
for any $\sigma \in \W_{mn}$ and $i = (i_1, \dots, i_r) \in I(mn,r)$.

Let $d \in \Pmn$ be a partition diagram. Let $\delta_{a,b}$ be the
usual Kronecker delta symbol, defined to be $1$ if $a=b$ and $0$
otherwise. Then the (right) action of $d$ on the basis element $\elemvec_i$ of tensor space is given by
the matrix $\Psi(d)$ whose entry
% $[\Psi(d)]^{i_1, \dots, i_r}_{i_{1'},   \dots, i_{r'}}$ in row $(i_1, \dots, i_r)$ and column $(i_{1'}, \dots, i_{r'})$ is given by
$[\Psi(d)]^{i_1, \dots, i_r}_{i_{\overline{1}},   \dots, i_{\overline{r}}}$ in row $(i_1, \dots, i_r)$ and column $(i_{\overline{1}}, \dots, i_{\overline{r}})$ is given by
\begin{equation}\label{otheraction}
%  [\Psi(d)]^{i_1, \dots, i_r}_{i_{1'}, \dots, i_{r'}} 
 [\Psi(d)]^{i_1, \dots, i_r}_{i_{\overline{1}}, \dots, i_{\overline{r}}} 
=   \prod \delta_{i_s, i_t}
\end{equation}
where the product is taken over all pairs $s,t$ in $\{1,\dots, r, \overline{1},
\dots, \overline{r}\}$ which are connected by an edge in $d$ 
(see, for example, \cite[Equation~(3.8)]{HalvRam}).  
%, where the vertices
%in $d$ are numbered $1$ to $r$ from left to right along the top row
%and from $1'$ to $r'$ from left to right along the bottom row.
  It is
easily checked that the linear extension of the rule $d \mapsto
\Psi(d)$ defines a representation $\Psi: \Pmn
\to \End_\Bbbk((\Bbbk^{mn})^{\otimes r})$.  
 To summarise,  
% REP added
writing $\Phi$ for the map induced by the action in \cref{eqn:tensor_action},
we have actions of the symmetric group and   partition algebra on tensor space as follows
\begin{equation}
  \Bbbk \W_{mn}   \xrightarrow{\ \Phi  \ }
   \End_\Bbbk((\Bbbk^{mn})^{\otimes r}) 
\xleftarrow{\ \Psi \ }  \Pmn.
\end{equation}

% The following theorem was first proven by Vaughan  Jones in \cite{jones}. %   over the complex field and then extended to an arbitrary ring   in \cite{BDM:SWD}.

%To state Schur-Weyl duality we require the another piece of notation. For  $\lambda=(\lambda_1,\lambda_2,\dots, \lambda_l)$ a partition,  set $\lambda_{[mn]}=(mn-|\lambda|,\lambda_1,\lambda_2,\dots, \lambda_l)$.  Every partition of $mn$ can be written in this form.
% covered earlier

\begin{thm}[\cite{jones, marbook}]\label{mrjones} %added marbook ref. Do these contain the bimodule decomposition?
  In the situation outlined above, the image of each representation is
  equal to the full centraliser algebra for the other action. More
  precisely, we have equalities
  \[
  \Phi (\Bbbk \W_{mn}) = \End_{\Pmn}((\Bbbk^{mn})^{\otimes r}), \qquad 
  \Psi(\Pmn) = \End_{\W_{mn}}((\Bbbk^{mn})^{\otimes r}).
  \]
  As a $ (\CC \W_{mn},\Pmn)$-bimodule, the tensor space decomposes as 
  $$
  (\Bbbk^{mn})^{\otimes r} \cong \bigoplus  {\sf S}(\lambda_{[mn]})\otimes L_{r, mn}(\lambda)
  $$
  where the sum is over all partitions $\lambda_{[mn]}$ of $mn$  such that $|\lambda|\leq r$.  

\end{thm}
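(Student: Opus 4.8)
The plan is to deduce this from classical Schur--Weyl duality together with the double-centralizer theorem and the representation theory of the partition algebra recalled in \cref{sec2}. First I would invoke the theorem of Jones and Martin (the first displayed equalities) as the known starting point: the actions $\Phi$ and $\Psi$ generate mutually centralizing algebras inside $\End_\Bbbk((\Bbbk^{mn})^{\otimes r})$. The substantive part is to extract the bimodule decomposition from this. The main tool is the double-centralizer theorem in the form: if $A$ is a semisimple algebra acting on a module $M$ and $B = \End_A(M)$, then $M$ decomposes as $\bigoplus_\lambda \mathrm{S}(\lambda)\otimes \Hom_A(\mathrm{S}(\lambda),M)$, where the $\Hom$-spaces are precisely the simple $B$-modules appearing, each $B$ being (Morita equivalent to, and in this case a quotient onto) a matrix algebra on that multiplicity space.

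Here I would take $A = \CC\W_{mn}$, which is semisimple since we are over $\CC$; its simple modules are the Specht modules $\mathrm{S}(\nu)$ for $\nu \vdash mn$. Classical Schur--Weyl (the $\GL_{mn}$ version restricted to permutation matrices is not literally what is needed — rather, I use that $\W_{mn} = \mathfrak{S}_{mn}$ acts on $(\Bbbk^{mn})^{\otimes r}$ and decompose this as an $\mathfrak{S}_{mn}$-module directly). The key combinatorial input is identifying which Specht modules $\mathrm{S}(\nu)$ occur: writing $\W_{mn}=\mathfrak{S}_{mn}$ acting diagonally on $r$ tensor factors, the module $(\Bbbk^{mn})^{\otimes r}$ is a permutation module whose orbits are indexed by set-partitions of $\{1,\dots,r\}$ into at most $mn$ blocks, and a standard argument (James's work, or the stability results recalled in \cref{sec1}) shows the Specht constituents are exactly those $\mathrm{S}(\nu)$ with $\nu = \lambda_{[mn]}$ for $|\lambda|\le r$. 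Then by the double-centralizer theorem the centralizer $B = \Psi(\Pmn) = \End_{\W_{mn}}((\Bbbk^{mn})^{\otimes r})$ acts on the multiplicity space $\Hom_{\W_{mn}}(\mathrm{S}(\lambda_{[mn]}),(\Bbbk^{mn})^{\otimes r})$, and this multiplicity space must be the simple $B$-module corresponding to $\lambda$. Finally I identify $B$-as-a-quotient-of-$\Pmn$: by the classification recalled after \cref{annihilate2}, the simple $\Pmn$-modules are the $L_{r,mn}(\lambda)$ for $\lambda\in\mathscr{P}(\le r)$, and matching up labels via the action of the idempotents $\mathsf{e}_k$ and equations \cref{bob}--\cref{bob2} pins down that the $B$-module attached to $\mathrm{S}(\lambda_{[mn]})$ is precisely $L_{r,mn}(\lambda)$, which is why the sum ranges over $|\lambda|\le r$.

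Concretely the steps are: (1) cite Jones--Martin for the double-centralizer equalities; (2) observe $\CC\W_{mn}$ is semisimple and apply the abstract double-centralizer/Wedderburn decomposition to get $(\Bbbk^{mn})^{\otimes r}\cong\bigoplus_\nu \mathrm{S}(\nu)\otimes U_\nu$ with $U_\nu$ the simple $\Psi(\Pmn)$-modules; (3) determine the set of $\nu$ occurring and show each has the form $\lambda_{[mn]}$ with $|\lambda|\le r$ — here the bound $|\lambda|\le r$ comes from the fact that only $r$ tensor positions are available to be permuted nontrivially, so a diagonal $\mathfrak{S}_{mn}$-orbit fixing at least $mn-r$ symbols forces the first row of $\nu$ to have length $\ge mn-r$; (4) match the labels $U_{\lambda_{[mn]}} = L_{r,mn}(\lambda)$ using the filtration by propagating blocks and the standard-module description from \cref{sec2}. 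The main obstacle I expect is step (3)–(4): one must be careful that $\Pmn$ need not be semisimple at these integer parameters (indeed $\delta_1\delta_2 = mn$ can lie in $\{0,\dots,2r-2\}$ when $mn$ is small, though our interest is $m,n\ge r$ where it is large), so the correct statement uses the simple heads $L_{r,mn}(\lambda)$ rather than standard modules, and one needs the label-matching to be compatible across the cellular filtration — this is exactly the content of the restriction/induction compatibility \cref{annihilate} together with the known (Martin) combinatorics of which $L_{r,mn}(\lambda)$ are nonzero. Once the labelling is fixed on one side via the $\mathsf{e}_k$-eigenspace decomposition of tensor space, the bimodule isomorphism follows formally.
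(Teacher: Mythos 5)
The paper does not prove \cref{mrjones}: the statement is attributed to \cite{jones, marbook} and used as a black box, so there is no in-paper argument to weigh your proposal against. Your plan is nonetheless the standard route to this result and is sound in outline. Steps (1) and (2) are unproblematic: since $\CC\W_{mn}$ is semisimple, the double-centraliser theorem together with the first displayed equalities gives $(\Bbbk^{mn})^{\otimes r}\cong\bigoplus_\nu {\sf S}(\nu)\otimes U_\nu$ with each nonzero $U_\nu$ a simple $\Psi(\Pmn)$-module, hence a simple $\Pmn$-module. For step (3) you should spell out both directions: the $\W_{mn}$-orbit decomposition exhibits tensor space as a direct sum of Young permutation modules $M^{(mn-p,1^p)}$ for $0\le p\le\min(r,mn)$, each of which contains only Specht modules ${\sf S}(\nu)$ with $\nu\trianglerighteq(mn-p,1^p)$, hence $\nu_1\ge mn-r$ and $\nu=\lambda_{[mn]}$ with $|\lambda|\le r$; conversely, each such ${\sf S}(\lambda_{[mn]})$ actually occurs because the Kostka number $K_{\lambda_{[mn]},\,(mn-|\lambda|,1^{|\lambda|})}$ is positive.

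The real content is step (4), which you correctly flag as the obstacle but leave as a slogan. The needed argument uses the idempotents in earnest: the image of $\Psi({\sf e}_k)$ in tensor space is spanned by vectors with the $\W_{mn}$-invariant vector $\sum_j e_j$ occupying the first $k$ slots, hence is isomorphic to $(\Bbbk^{mn})^{\otimes(r-k)}$ as a $\bigl(\CC\W_{mn},\,{\sf e}_k\Pmn{\sf e}_k\bigr)$-bimodule, where ${\sf e}_k\Pmn{\sf e}_k\cong P_{r-k}(mn)$ by \cref{bob}; on the module side, \cref{annihilate} gives $L_{r,mn}(\lambda){\sf e}_k\cong L_{r-k,mn}(\lambda)$ when $|\lambda|\le r-k$ and $0$ otherwise. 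Applying ${\sf e}_{r-|\lambda|}$ therefore drops both sides of the putative bimodule isomorphism down by $r-|\lambda|$ levels, where by \cref{annihilate2} the surviving module is the inflated Specht module ${\sf S}(\lambda)$ and the label is forced; an induction on $r$ then pins the labels at the top. Without carrying this out, your proposal is a correct outline rather than a proof, but it is indeed the argument found in the sources cited.
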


Here, if $M$ is a left $\W_{mn}$-module then   
  $$
 \mathcal{F}_r(M)=\Hom_{\W_{mn}}(M, (\Bbbk^{mn})^{\otimes r})
  $$
  carries the structure of a  right $P_r(mn)$-module. Conversely, if $N$ is a  right $\Pmn$-module, 
   then    
  $$
 \Hom_{\Pmn}( N, (\Bbbk^{mn})^{\otimes r})
  $$
  carries the structure of a left $\W_{mn} $-module.  In particular, the theorem shows that  $\mathcal{F}_r( {\sf S}(\lambda_{[mn]}) )  \cong  L_{r, mn}(\lambda)$.

Now, following~\cite[Section 4.1]{jk}, we consider
$$\W_m \wr \W_n =\{ (\sigma_1,\sigma_2,\ldots, \sigma_n ; \pi) : \sigma_i \in  \Sym _m, i=1,\ldots, n, \pi \in \Sym _n\},$$
which we identify with a subgroup of $ \Sym _{mn}$ via the embedding
\begin{equation}\label{eqn:wreath_embedding}
(\sigma_1,\sigma_2,\ldots, \sigma_n ; \pi ) \mapsto \left( \begin{array}{c} (j-1)m+i \\ (\pi(j)-1)m+\sigma_{\pi(j)}(i) \end{array}\right)_{i=1,\ldots, m, j=1\ldots, n}.
\end{equation}
   The action~(\ref{eqn:tensor_action}) of $\W_{mn}$ on tensor space restricts to an action of $\W_m \wr \W_n$ (using the  identification~(\ref{eqn:wreath_embedding})).     
 Having chosen our wreath product subgroup in the fashion above, we let this guide our choice of a new labelling set for the basis of tensor space   as follows.  For  $1\leq i \leq m$ and $1\leq j \leq n$, we set
   $$v^j_i = \elemvec_{(j-1)m+i} ,$$ 
 and we note that    
 \begin{equation}
 \label{eqn:action}
(\sigma_1,\sigma_2,\dots, \sigma_n ; \pi)    (v_{i_1}^{j_1}\otimes     v_{i_2}^{j_2} \otimes \dots \otimes     v_{i_r}^{j_r})
=
 v^{\pi(j_1)}_{\sigma_{\pi(j_1)}(i_1)}
 \otimes 
  v^{\pi(j_2)}_{\sigma_{\pi(j_2)}(i_2)}
  \otimes 
  \dots
  \otimes
   v^{\pi(j_r)}_{\sigma_{\pi(j_r)}(i_r)}.  
  \end{equation} 
Using Schur--Weyl duality, we now apply $\mathcal{F}_r$ to the Foulkes module to define a   $P_r(mn)$-module. This module's decomposition into simple constituents will be  governed  by the plethysm coefficients.  
This will allow  us to study plethysm coefficients using the tools from  the representation theory of partition algebras. 
\begin{defn} 
  We say that the basis vector
  $$v = v^{j_1}_{i_1}
  \otimes 
  v^{j_2}_{i_2}
  \otimes 
  \dots
  \otimes 
  v^{j_r}_{i_r}
  \in (\Bbbk^{mn})^{\otimes r}$$ has
 \emph{value-type} $(\Lambda, \Lambda')$ if 
    $k \sim_{\Lambda'}l$ if and only if $j_k=j_l$ and 
   $k \sim_{\Lambda}l$ if and only if $j_k=j_l$ and $i_k=i_l$. We write  ${\rm val}(v)=(\Lambda, \Lambda')$.
Observe that $\Lambda, \Lambda'$ are set-partitions of $\{1,2,\dots, r\}$ with $\Lambda \subseteq \Lambda'$ and, since there are at most $m$ possible subscripts and $n$ possible superscripts, ${\rm val}(v) \in   {\sf F}^r_{m,n}$.    
  Given $(\Lambda, \Lambda') \in {\sf F}^r_{m,n}$, we let $v_ {(\Lambda,\Lambda')} $ denote the vector 
$$v_ {(\Lambda,\Lambda')} =\sum_{{\rm val}(v)=(\Lambda,\Lambda') } v, $$
where the sum runs over all basis vectors of $ (\Bbbk^{mn})^{\otimes r}$ with value-type $(\Lambda,\Lambda')$.
  \end{defn}

   \begin{eg}  For example, the basis vector
$ v =v_2^1 \otimes v_1^1 \otimes v_1^1 \otimes v_3^2 \otimes v_2^3 \otimes v_3^2 \otimes v_3^3$ 
has 
$${\rm val}(v) = ( \{\{1\}, \{2,3\}, \{4,6\},\{5\},\{7\}\}, \{\{1,2,3\}, \{4,6\}, \{5,7\}\}).$$
To obtain $\Lambda'= \{\{1,2,3\}, \{4,6\}, \{5,7\}\}$  note that 
the superscripts match in positions 1,2,3 and they match in positions 4 and 6 and they also match in positions 5 and 7.
 Although the subscripts match in positions $1$ and $5$, however the superscripts do not match and so $1\nsim_{\Lambda} 5$.  
  \end{eg}

 \begin{eg} Let  $m=n=r=2$ and $ (\Lambda,\Lambda') =(\{\{1\},\{2\}\}, \{\{1,2\} \})$ then
 $$
  v_ {(\Lambda,\Lambda')} = 
%  v_{1}^{1}\otimes v_{1}^{1}
%  +
  v_{2}^{1}\otimes v_{1}^{1}
  +
  v_{1}^{1}\otimes v_{2}^{1}
%  +
%  v_{2}^{1}\otimes v_{2}^{1}
%  +
%    v_{1}^{2}\otimes v_{1}^{2}
  +
  v_{2}^{2}\otimes v_{1}^{2}
  +
  v_{1}^{2}\otimes v_{2}^{2}.
%  +
%  v_{2}^{2}\otimes v_{2}^{2}.
 $$
 \end{eg}
\begin{eg} \label{ex:4,5}Let $m=4$ and $n=r=5$ and $$ {(\Lambda,\Lambda')}= 
(  \{\{1,2,4\},\{3\},\{5\}\}, \{\{1,2,3,4\},\{5\}\} ).$$  Then
 $$
 v_ {(\Lambda,\Lambda')}= \sum_{
 \begin{subarray}c
 1\leq i_1, i_2,i_3 \leq 4\\
   1\leq j_1, j_2 \leq 5\\
 i_1 \neq i_2, j_1\neq j_2 \\  
 \end{subarray}
 }
 v_{i_1}^{j_1}\otimes  v_{i_1}^{j_1}\otimes  v_{i_2}^{j_1}\otimes  v_{i_1}^{j_1}
 \otimes  v_{i_3}^{j_2}.
 $$
\end{eg}

Consider the action of the group $\W_m \wr \W_n$ on $(\Bbbk^{mn})^{\otimes r}$.

\begin{lem}\label{lem:orbit_sum}
The $\W_m \wr \W_n$-orbit of a basis vector $v$ of $(\Bbbk^{mn})^{\otimes r}$ consists of precisely those basis vectors having value-type equal to ${\rm val}(v)$.
%The set $$\{v_ {(\Lambda,\Lambda')} \mid  {(\Lambda,\Lambda')}\in {\sf F}^r_{m,n}\}$$ 
  %provides a basis for the of $\W_m\wr \W_n$-orbits on $(\CC^{mn})^{\otimes r}$.  

  \end{lem}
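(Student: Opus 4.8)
The plan is to verify the two inclusions of the orbit directly from the explicit formula~(\ref{eqn:action}) for the action of $\W_m \wr \W_n$ on the new basis $\{v^j_i\}$. Fix a basis vector $v = v^{j_1}_{i_1}\otimes\cdots\otimes v^{j_r}_{i_r}$ with ${\rm val}(v)=(\Lambda,\Lambda')$.

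First I would show that every vector in the orbit has value-type $(\Lambda,\Lambda')$. Apply an arbitrary element $(\sigma_1,\dots,\sigma_n;\pi)$ and use~(\ref{eqn:action}) to see that the result is $v^{\pi(j_1)}_{\sigma_{\pi(j_1)}(i_1)}\otimes\cdots\otimes v^{\pi(j_r)}_{\sigma_{\pi(j_r)}(i_r)}$. The key point is that the maps $j\mapsto\pi(j)$ and, for each fixed value of the superscript, $i\mapsto\sigma_{\pi(j)}(i)$ are bijections: hence for positions $k,l$ one has $\pi(j_k)=\pi(j_l)$ iff $j_k=j_l$, and when this holds, $\sigma_{\pi(j_k)}(i_k)=\sigma_{\pi(j_l)}(i_l)$ iff $i_k=i_l$ (here using that the permutation applied to the $i$-coordinates in positions $k$ and $l$ is literally the \emph{same} permutation $\sigma_{\pi(j_k)}$, since $\pi(j_k)=\pi(j_l)$). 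Reading off the defining conditions of value-type, this says exactly that the image vector has the same $(\Lambda,\Lambda')$ as $v$.

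Conversely, I would show that any basis vector $w=v^{j'_1}_{i'_1}\otimes\cdots\otimes v^{j'_r}_{i'_r}$ with ${\rm val}(w)=(\Lambda,\Lambda')$ lies in the orbit of $v$. The set-partition $\Lambda'$ records which positions share a superscript; since $\ell(\Lambda')\le n$, both the superscript-labelling $k\mapsto j_k$ of $v$ and that of $w$ are injections from the blocks of $\Lambda'$ into $\{1,\dots,n\}$, so one may choose $\pi\in\W_n$ with $\pi(j_k)=j'_k$ for all $k$. Next, for each superscript value $t$ appearing among the $j'_k$, the positions with that superscript are partitioned by $\Lambda$ into at most $m$ sub-blocks, and within these positions the subscripts of $v$ (after relabelling by $\pi$) and of $w$ are injections from those sub-blocks into $\{1,\dots,m\}$; hence one can choose $\sigma_t\in\W_m$ carrying the subscripts of $v$ to those of $w$ on exactly those positions. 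For superscript values not appearing, take $\sigma_t$ arbitrary. Then~(\ref{eqn:action}) gives $(\sigma_1,\dots,\sigma_n;\pi)\,v=w$.

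The main obstacle is purely bookkeeping: one must be careful that the constraints $\ell(\Lambda')\le n$ and ``at most $m$ blocks of $\Lambda$ inside each block of $\Lambda'$'' — i.e.\ the condition $(\Lambda,\Lambda')\in{\sf F}^r_{m,n}$ recorded in the definition of value-type — are precisely what guarantees the required injectivity so that the permutations $\pi$ and $\sigma_t$ genuinely exist. Once the injectivity is phrased correctly the construction of $(\sigma_1,\dots,\sigma_n;\pi)$ is immediate and no genuine difficulty remains.
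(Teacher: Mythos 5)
Your proof is correct and, in substance, is the same argument the paper has in mind: the paper's proof of this lemma consists of the single sentence ``This follows from Equation (\ref{eqn:action})'', and what you have written is exactly the bookkeeping that sentence is eliding. You correctly isolate both directions — that the image under any $(\sigma_1,\dots,\sigma_n;\pi)$ preserves value-type because each relevant map is a bijection applied consistently across a block of $\Lambda'$, and conversely that the bounds $\ell(\Lambda')\le n$ and ``at most $m$ blocks of $\Lambda$ per block of $\Lambda'$'' guarantee the injections needed to construct the witness $(\sigma_1,\dots,\sigma_n;\pi)$ sending $v$ to a prescribed $w$ of the same value-type.
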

  
  \begin{proof}
This is follows from Equation~(\ref{eqn:action}).
% Now, note that  the group  $\W_{n}$ acts on an element 
%  $v= (v^{j_1,j_2,\dots ,j_m}_{i_1,i_2,\dots,i_n} \in (\CC^{mn})^{\otimes r}$
%   by permuting the superscripts $j_1,j_2, \dots , j_n$
%  amongst themselves.  
%  Suppose that $v$ is of super-value-type $\Lambda'$.   
%For a given  $v \in (\CC^{mn})^{\otimes r}$, 
%   the stabiliser  subgroup of  $v$ in $\W_{n}$ 
%    is  of the form $\W_{\Lambda'_1}\times \W_{\Lambda'_2}\times \dots\times \W_{\Lambda'_\ell} \leq \W_n$ for 
%   $\Lambda' $ the set-partition above.   
%    
%   Now, note that  the group  $\W_{m}$ acts on an element 
%  $v=  v^{j_1,j_2,\dots ,j_m}_{i_1,i_2,\dots,i_n} \in (\CC^{mn})^{\otimes r}$
%   by permuting the subscripts $i_1,i_2, \dots , i_m$
%  amongst themselves.  
%  Suppose that $v$ is of sub-value-type $\Lambda$.   
%For a given  $v \in (\CC^{mn})^{\otimes r}$, 
%   the stabiliser  subgroup of  $v$ in $\W_{n}$ 
%    is  of the form $\W_{\Lambda_1}\times \W_{\Lambda_2}\times \dots\times \W_{\Lambda_\ell} \leq \W_n$ for 
%   $\Lambda $ the set-partition above.   
%
%The cosets of these stabiliser subgroups are of the form 
%$$   (\sigma_1,\sigma_2,\dots, \sigma_n : \pi) 
%=\sigma_1 \sigma_2\dots \sigma_n \pi $$
%(where we note that $\pi$ is applied to $v \in (\mathbb{C}^{mn})^{\otimes r}$ prior to the application of $\sigma$s).  
%Applying these cosets to an arbitrary elementary tensor, we obtain the result. 
\end{proof}

 \begin{cor} \label{phi-basis}   For each $ {(\Lambda,\Lambda')}\in {\sf F}^r_{m,n}$, define an element 
$$\varphi_ {(\Lambda,\Lambda')}\in  \Hom_{\W_{mn}}(\W_{mn}\otimes_{\W_m\wr\W_n}\CC, (\CC^{mn})^{\otimes r})
  $$
 by setting 
 $$\varphi_ {(\Lambda,\Lambda')}(\sigma \otimes 1 )= \sigma v_ {(\Lambda,\Lambda')}$$
    for any $\sigma \in \Sym _{mn}$.   The set 
    $$\{\varphi_{ {(\Lambda,\Lambda')}}\mid  {(\Lambda,\Lambda')} \in {\sf F}_{m,n}^r\}$$
    is a basis for the right $\Pmn$-module 
$     \Hom_{\W_{mn}}(\W_{mn}\otimes_{\W_m\wr\W_n}\CC, (\CC^{mn})^{\otimes r})$.

    \end{cor}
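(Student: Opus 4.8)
The plan is to exhibit $\Hom_{\W_{mn}}(\W_{mn}\otimes_{\W_m\wr\W_n}\CC,\ (\CC^{mn})^{\otimes r})$ explicitly and match it up with the value-type vectors $v_{(\Lambda,\Lambda')}$. First I would use Frobenius reciprocity (the tensor-hom adjunction for the induced module) to identify this Hom-space with $\bigl((\CC^{mn})^{\otimes r}\bigr)^{\W_m\wr\W_n}$, the space of $\W_m\wr\W_n$-invariants in tensor space: an $\W_{mn}$-homomorphism $\varphi$ out of $\W_{mn}\otimes_{\W_m\wr\W_n}\CC$ is determined by $\varphi(1\otimes 1)$, which must be a vector fixed by $\W_m\wr\W_n$, and conversely any such fixed vector extends uniquely. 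Under this identification $\varphi_{(\Lambda,\Lambda')}$ corresponds to the vector $v_{(\Lambda,\Lambda')}=\sum_{\mathrm{val}(v)=(\Lambda,\Lambda')}v$.

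Next I would show that $\{v_{(\Lambda,\Lambda')} : (\Lambda,\Lambda')\in{\sf F}^r_{m,n}\}$ is a basis for the invariant space. Linear independence is immediate: the tensor-space basis vectors $v=v^{j_1}_{i_1}\otimes\cdots\otimes v^{j_r}_{i_r}$ are partitioned according to their value-type, and each $v_{(\Lambda,\Lambda')}$ is the (nonempty, by the definition of ${\sf F}^r_{m,n}$ — it is precisely the condition $\ell(\Lambda')\le n$ together with at most $m$ blocks of $\Lambda$ inside each block of $\Lambda'$ that guarantees some basis vector realises the value-type) sum over one class, so distinct $(\Lambda,\Lambda')$ give sums with disjoint support. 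Spanning follows from \cref{lem:orbit_sum}: the $\W_m\wr\W_n$-orbits on the tensor-space basis are exactly the value-type classes, so a vector is $\W_m\wr\W_n$-invariant if and only if it is a linear combination of the orbit sums $v_{(\Lambda,\Lambda')}$, and these orbit sums are exactly the invariant vectors just described.

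Finally I would note that the right $\Pmn$-module structure on $\Hom_{\W_{mn}}(\W_{mn}\otimes_{\W_m\wr\W_n}\CC,(\CC^{mn})^{\otimes r})$ is the one coming from the right action of $\Pmn$ on $(\CC^{mn})^{\otimes r}$ via $\Psi$ (post-composition), as set up after \cref{mrjones}, and that the formula $\varphi_{(\Lambda,\Lambda')}(\sigma\otimes 1)=\sigma v_{(\Lambda,\Lambda')}$ indeed defines a well-defined $\W_{mn}$-module homomorphism: it is constant on cosets of $\W_m\wr\W_n$ because $v_{(\Lambda,\Lambda')}$ is fixed by that subgroup, and it is visibly $\W_{mn}$-equivariant. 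Assembling these observations gives the claim.

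I expect the only genuine point requiring care — the "main obstacle", though it is minor — to be checking that every $(\Lambda,\Lambda')\in{\sf F}^r_{m,n}$ actually arises as the value-type of some basis vector, i.e. that $v_{(\Lambda,\Lambda')}\neq 0$; this is exactly where the defining inequalities of ${\sf F}^r_{m,n}$ ($\ell(\Lambda')\le n$ and at most $m$ blocks of $\Lambda$ per block of $\Lambda'$) are used, by assigning distinct superscripts in $\{1,\dots,n\}$ to the blocks of $\Lambda'$ and, within each, distinct subscripts in $\{1,\dots,m\}$ to the blocks of $\Lambda$. Everything else is bookkeeping with Frobenius reciprocity and the orbit description from \cref{lem:orbit_sum}.
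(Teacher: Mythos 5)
Your proposal is correct and follows essentially the same route as the paper: the paper's proof is the one-line citation of Frobenius reciprocity and \cref{lem:orbit_sum}, which is exactly the pair of tools you invoke (identifying the Hom-space with $\W_m\wr\W_n$-invariants, then using the orbit description to see the orbit sums form a basis). Your additional care in noting that the defining inequalities of ${\sf F}^r_{m,n}$ guarantee each $v_{(\Lambda,\Lambda')}$ is nonzero is a reasonable point that the paper leaves implicit.
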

\begin{proof}    % That  $     \Hom_{\W_{mn}}(\W_{mn}\otimes_{\W_m\wr\W_n}\CC, (\CC^{mn})^{\otimes r})$ carries the structure of a $P_r(mn)$-module follows from Schur--Weyl duality.   The rest of the proposition  follows by  Frobenius reciprocity.  
The statement  follows by  Frobenius reciprocity and Lemma~\ref{lem:orbit_sum}.  
     \end{proof}

\begin{defn}
  We refer to the (right) $\Pmn$-module 
$$     \mathcal{F}_r(\W_{mn}\otimes_{\W_m\wr\W_n}\CC)= \Hom_{\W_{mn}}(\W_{mn}\otimes_{\W_m\wr\W_n}\CC, (\CC^{mn})^{\otimes r})$$ 
 as the \emph{Foulkes module} for $\Pmn$.
 \end{defn}
%REP changed -- need a better name 
%   In the Schur-Weyl duality of Theorem~\ref{mrjones}, the usual Foulkes module 
%    $\ind_{\W_m\wr \W_n}^{\W_{mn}} \CC=\W_{mn}\otimes_{\W_m\wr\W_n}\CC$ 
%     for the symmetric group $\W_{mn}$ is sent to the  Foulkes module 
%      for the partition algebra $\Pmn$.  
      We give a second basis of the  Foulkes module for $\Pmn$, $\{ \overline{\varphi}_ {(\Lambda,\Lambda')} \, : \, {(\Lambda,\Lambda')}\in {\sf F}^r_{m,n}\}$, by setting 
$$  
   \overline{\varphi}_ {(\Lambda,\Lambda')}= \sum_{(\Gamma,\Gamma'): (\Lambda,\Lambda')\subseteq (\Gamma,\Gamma')} \varphi_ {(\Gamma,\Gamma')}. 
$$
%In particular,
% $\overline{\varphi}_ {(\Lambda,\Lambda')}$ sends the generator $1_{W_{mn}} \otimes 1$ to
% $$ \sum v^{j_1}_{i_1} \otimes v^{j_2}_{i_2} \otimes  \cdots v^{j_r}_{i_r},$$
% where the sum is over $i_1, \ldots, i_r \in \{1, \ldots, m\}$,   $j_1, \ldots, j_r \in \{1, \ldots, n\}$ such that 
% \begin{eqnarray*}
% a \sim_{\Lambda'} b & \Rightarrow & j_a=j_b,\\
%  a \sim_{\Lambda} b & \Rightarrow & j_a=j_b \textrm{ and } i_a=i_b. 
% \end{eqnarray*}
The element $\overline{\varphi}_ {(\Lambda,\Lambda')}$ can be defined for any $(\Lambda,\Lambda')\in {\sf F}^r$, sending the 
the generator $1_{\Sym _{mn}} \otimes 1$ to
\begin{equation}\label{eqn:phi_bar}
 \sum_{\begin{subarray}c
 1\leq i_S \leq m, \forall S \in \Gamma, \\
   1\leq j_\Sigma \leq n, \forall \Sigma \in \Gamma',\\  
   i_a=i_S, j_a=j_\Sigma, \forall  a \in S \subseteq \Sigma \\
 \end{subarray}
  } v^{j_1}_{i_1} \otimes v^{j_2}_{i_2} \otimes  \cdots v^{j_r}_{i_r}.
\end{equation}
 For example, if $(\Lambda,\Lambda')$ is as in Example~\ref{ex:4,5} then 
 $$\overline{\varphi}_ {(\Lambda,\Lambda')}(1_{\W_{mn}} \otimes 1)=\sum_{
  \begin{subarray}c
 1\leq i,i',i'' \leq 4\\
   1\leq j,j' \leq 5\\  
 \end{subarray}} v^{j}_{i} \otimes v^{j}_{i} \otimes v^{j}_{i'}\otimes v^{j}_{i}\otimes v^{j'}_{i''}.
 $$

 %%%%%%%%%%%%%%%%%%%%%%%%%%%%%%%%%%%%%%%%%%%%%%%%%%%%%%%%%%%%%%%%%%%%%%%%%%%%%%%%%%%%%%%%%%%%%%

\section{The   stable Foulkes module}\label{sec:diagrammatic}
In this section we factorise the 
partition algebra parameter  $\delta=\delta_1  \delta_2$.   
In practice, we will often specialise $\delta_1=m$ and $\delta_2=n \in \NN$, however it  will be useful to be able to specialise from generic parameters. 
%We discuss a diagrammatic $\Pdelta$-module, $ \Fdelta$,  which we shall call the 
%{\em stable Foulkes module} for $\Pdelta$.
%We shall justify this name in \cref{relateS},   where we shall show that this module decomposes according to the stable plethysm coefficients.    
%%In the mean time, we shall define two bases of this module (the change-of-basis matrix will be given by the partial orderings from \cref{partial})  and discuss the  $P_r(\delta_1,\delta_2)$-action on the diagrammatic basis.  
% 
%
%
 We define the \emph{stable Foulkes module} to be the complex vector space
 $$  \Fdelta = {\rm Span}_{\mathbb{C}}\{[\Lambda, \Lambda'] \mid (\Lambda, \Lambda')\in {\sf F}^r\},$$
% Given $ (\Lambda, \Lambda')_d\in   {\sf F}^r_{\delta_1,\delta_2}$  and $p\in  P_r(\delta_1,\delta_2)$, 
%we set 
equipped with the following right  action of $ \Pdelta$. For $d$ an $(r,r)$-partition diagram, 
$$[\Lambda, \Lambda'] d = \delta_1^{t_1} \delta_1^{t_2}  [\Gamma, \Gamma'],$$
if
$[\Lambda] d = \delta_1^{t_1}  [\Gamma]$ in the $P_r(\delta_1)$-module $\Delta_{r, \delta_1}(\emptyset)$, and 
$[\Lambda'] d = \delta_2^{t_2}  [\Gamma']$ in the $P_r(\delta_2)$-module $\Delta_{r, \delta_2}(\emptyset)$.
 (Recall from \cref{sec2} that the $P_r(\delta)$-module  $\Delta_{r, \delta}(\emptyset)$  has diagrammatic basis given by all set-partitions of $\{1,2,\ldots,r\}$ with the natural concatenation action.)

%for $p\in P_r(\delta_1,\delta_2)$ and where $t_1$ (respectively $t_2$) denotes the number of closed loops in the diagram  $  (\Lambda_d)p$ (respectively $  (\Lambda')p$).  

\begin{eg}\label{egoffiltration}
Let $r=2$.  The stable Foulkes module  $\mathbb{F}^2(\delta_1, \delta_2)$  is 3-dimensional with basis given by the following three diagrams:
$$\scalefont{0.8}
 \begin{tikzpicture}[scale=0.7]
    \draw  (0,3.5) arc (180:360:1 and 0);
   \foreach \x in {0,2}
        \fill[white](\x,3.5) circle (10pt);   
                   \draw (2,3.5) node {$\encircle{2}$}; 
                           \draw (0,3.5) node {$\encircle{1}$}; 
     \draw[rounded corners=15pt]
  (-0.75,2.75) rectangle ++(3.5,1.5);
   
      \end{tikzpicture} 
 \qquad \qquad \quad \quad 
\begin{tikzpicture}[scale=0.7]
%    \draw  (0,3.5) arc (180:360:1 and 0);
   \foreach \x in {0,2}
        \fill[white](\x,3.5) circle (10pt);   
                   \draw (2,3.5) node {$\encircle{2}$}; 
                           \draw (0,3.5) node {$\encircle{1}$}; 
     \draw[rounded corners=15pt]
  (-0.75,2.75) rectangle ++(1.5,1.5);
     \draw[rounded corners=15pt]
  (-0.75+2,2.75) rectangle ++(1.5,1.5);

      \end{tikzpicture} \qquad \qquad \quad \quad 
\begin{tikzpicture}[scale=0.7]
%    \draw  (0,3.5) arc (180:360:1 and 0);
   \foreach \x in {0,2}
        \fill[white](\x,3.5) circle (10pt);   
                   \draw (2,3.5) node {$\encircle{2}$}; 
                           \draw (0,3.5) node {$\encircle{1}$}; 
     \draw[rounded corners=15pt]
  (-0.75,2.75) rectangle ++(3.5,1.5);
   
      \end{tikzpicture} 
$$
The generators of $P_2(\delta_1 \delta_2)$ act as follows:
$$
\pgen _1 \mapsto 
\left(\begin{array}{ccc}0 & 0 & 0 \\1 & \delta_1\delta_2 & \delta_1 \\0 & 0 & 0\end{array}\right) ,
\qquad 
\pgen _{1,2} \mapsto 
\left(\begin{array}{ccc}1 & 1 & 1   \\0 & 0 & 0 \\0 & 0 & 0 \end{array}\right) ,
\qquad 
\sgen _{1,2} \mapsto 
\left(\begin{array}{ccc}1 & 0 & 0   \\0 & 1 & 0 \\0 & 0 & 1 \end{array}\right) .
$$
From this, one observes that the first two diagrams span a 2-dimensional $P_2(\delta_1  \delta_2)$-submodule isomorphic to $\Delta_{2, \delta_1\delta_2}(\varnothing)$.  The resulting quotient module  is isomorphic to 
$\Delta_{2, \delta_1\delta_2}((2))$.  
\end{eg} 

We  now describe the action of the generators of the partition algebra $\Pdelta$ on the basis of the    stable   Foulkes module.  
We let $ \Lambda=\{S_1, S_2, \dots , S_p\}$ and $ \Lambda' =  \{ \Sigma_1,\Sigma_2,\dots, \Sigma_q \} $ for $1\leq q\leq p \leq r$.  
Recall the notational convention of \cref{usefulconvention}.  
Observe that 
\begin{equation}  \label{p12}[\Lambda,\Lambda']\pgen _{1,2}
=
\begin{cases}  
  [\{S_1, S_2, S_3, \dots S_p\},\{\Sigma_1,\Sigma_2,\dots, \Sigma_q\}] &\text{if }1,2\in S_1 \subseteq  \Sigma_1
\\  
[\{S_1\cup S_2, S_3, \dots S_p\},\{\Sigma_1\cup\Sigma_2,\dots, \Sigma_q\}] &\text{if }1\in S_1 \subseteq  \Sigma_1, 2\in S_2 \subseteq \Sigma_2
\\
  [\{S_1\cup S_2, S_3, \dots S_p\},\{\Sigma_1,\Sigma_2,\dots, \Sigma_q\}] &\text{if }1\in S_1 \subseteq  \Sigma_1, 2\in S_2 \subseteq \Sigma_1
 \end{cases}\end{equation}
and also
\begin{equation}  \label{p1}[\Lambda,\Lambda']\pgen _{1}
=
\begin{cases}
\delta_1 \delta_2 \times [\Lambda,\Lambda']  &\text{if } \{1\}=S_1 = \Sigma_1
\\
  [\{\{1\}, S_1-\{1\}, S_2, S_3, \dots S_p\},\{\{1\}, \Sigma_1-\{1\},\Sigma_2,\dots, \Sigma_q\}] &\text{if }\{1\}\subset S_1 \subseteq   \Sigma_1
\\
\delta_1  \times  [\{\{1\},   S_2, S_3, \dots S_p\},\{\{1\}, \Sigma_1-\{1\},\Sigma_2,\dots, \Sigma_q\}] &\text{if }\{1\}= S_1 \subset    \Sigma_1.
 \end{cases}
 \end{equation}
The generators $\sgen _{i,i+1}$ for $1\leq i <r$ act in the usual fashion by  permuting $\{1,2,\ldots,r\}$.
%act by place permutation in the usual fashion.  
% not place perms?
%
For ease of notation, we do not write these actions out explicitly.  

%Finally, we conclude this section by defining a second basis of $\mathbb{F}^r_{\infty,\infty}$ which we dub the {\sf orbit basis} (for reasons that will become clear later).  
% We let 
%\begin{equation}\label{eq:mobiusa}
% [\Gamma,\Gamma']_{\mathcal{O}} = \sum_{(\Lambda,\Lambda') \preceq (\Gamma,\Gamma')} \zeta_{ (\Lambda,\Lambda'),(\Gamma,\Gamma')} [\Lambda,\Lambda']  ,
%\end{equation}
%where $ \zeta_{ (\Lambda,\Lambda'),(\Gamma,\Gamma')}$ is the $  ((\Lambda,\Lambda'),(\Gamma,\Gamma')) $  entry of the $\zeta$-matrix defined in \cref{earlier section} .    The transition matrix between the diagram basis  and the orbit basis   is the matrix $\zeta $, called the \emph{zeta matrix} of the poset $ {\sf F}^r_{\infty,\infty}$. It is uni-triangular with respect to any extension to a linear order, and thus it is invertible, confirming that indeed the elements $\mathcal{O}(\Lambda,\Lambda') \in   {\mathbb F}^r_{\delta_1,\delta_2} $,  form a basis of $ {\mathbb F}^r_{\delta_1,\delta_2} $.  By definition, the inverse of $\zeta $ is the matrix $\mu $ of the M\"obius function of the poset $ {\sf F}^r_{\infty,\infty}$  and so we deduce that 
%\begin{equation}\label{eq:mobiusa}
% [\Lambda,\Lambda'] = \sum_{(\Lambda,\Lambda') \preceq (\Gamma,\Gamma')}  [\Gamma,\Gamma']    _{\mathcal{O}}  ,
%\end{equation}

 %%%%%%%%%%%%%%%%%%%%%%%%%%%%%%%%%%%%%%%%%%%%%%%%%%%%%%%%%%%%%%%%%%%%%%%%%%%%%%%%%%
 
\section{Relating the  Foulkes  and stable Foulkes  modules for the partition algebra} \label{relateS}
We again  specialise the parameters $\delta_1=m$ and $\delta_2=n$ to relate the two right $\Pmn$-modules we introduced in the previous sections: the  stable Foulkes module $\Fmn$ and the 
%ramified 
Foulkes module $\Hom_{\W_{mn}}(\W_{mn}\otimes_{\W_m\wr\W_n}\CC, (\CC^{mn})^{\otimes r})$. 
We shall do this using the  elements $\overline{\varphi}_{(\Lambda, \Lambda')}$, which   were defined in (\ref{eqn:phi_bar}).
 
    \begin{thm} \label{schurfunctor2}
Let $m,n,r \in \mathbb{N}$.  There is a  surjective homomorphism of   $\Pmn$-modules     
$$\Theta:
%\Phi: %\Phi was one of the SW duality maps
% 
\Fmn \twoheadrightarrow   \Hom_{\W_{mn}}(\W_{mn}\otimes_{\W_m\wr\W_n}\CC, (\CC^{mn})^{\otimes r}),
   $$  
 mapping  $[\Lambda, \Lambda']$ to $\overline{\varphi}_{(\Lambda, \Lambda')}$.  
   The map $\Theta$ is injective if and only if $m,n\geq r$.  
    \end{thm}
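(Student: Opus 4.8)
The plan is to construct $\Theta$ as the linear map sending the diagrammatic basis element $[\Lambda,\Lambda']$ to the element $\overline{\varphi}_{(\Lambda,\Lambda')}$ of the Foulkes module described in Equation~(\ref{eqn:phi_bar}), and then to verify three things in turn: that $\Theta$ is a homomorphism of $\Pmn$-modules, that it is always surjective, and that it is injective precisely when $m,n\geq r$. For the homomorphism property, I would check compatibility with the generators ${\sf p}_1$, ${\sf p}_{1,2}$ and ${\sf s}_{i,i+1}$ of $\Pmn$ listed in \cref{generators}. On the diagrammatic side the action is given explicitly by (\ref{p12}) and (\ref{p1}); on the tensor-space side one uses (\ref{otheraction}) together with the fact that $\overline{\varphi}_{(\Lambda,\Lambda')}$ is the orbit-sum vector $v_{(\Lambda,\Lambda')}$ summed over coarsenings (\ref{eqn:phi_bar}). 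The key point is that contracting a strand (which introduces the $\delta$-factors $\delta_1^{t_1}\delta_2^{t_2}=m^{t_1}n^{t_2}$) corresponds exactly to collapsing superscripts and subscripts on the tensor factors: merging two blocks of $\Lambda'$ (the superscript partition) multiplies by a factor of $\delta_2=n$ coming from the over-count of the freed superscript index, and similarly for $\Lambda$ with $\delta_1=m$. So the homomorphism check reduces to a case-by-case matching of (\ref{p12})--(\ref{p1}) with the computation of $\overline{\varphi}_{(\Lambda,\Lambda')}\cdot{\sf p}_{1,2}$ and $\overline{\varphi}_{(\Lambda,\Lambda')}\cdot{\sf p}_{1}$ in tensor space; this is routine but is where one must be careful about which parameter ($m$ or $n$) appears.

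For surjectivity, I would invoke the Corollary preceding the definition of the Foulkes module: the set $\{\varphi_{(\Lambda,\Lambda')}\mid(\Lambda,\Lambda')\in{\sf F}^r_{m,n}\}$ is a basis of the target, and the change-of-basis relation $\overline{\varphi}_{(\Lambda,\Lambda')}=\sum_{(\Gamma,\Gamma')\supseteq(\Lambda,\Lambda')}\varphi_{(\Gamma,\Gamma')}$ is unitriangular with respect to the refinement order on ${\sf F}^r_{m,n}$. Hence $\{\overline{\varphi}_{(\Lambda,\Lambda')}\mid(\Lambda,\Lambda')\in{\sf F}^r_{m,n}\}$ is also a basis of the target. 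Since $\Theta$ maps the spanning set $\{[\Lambda,\Lambda']\mid(\Lambda,\Lambda')\in{\sf F}^r\}$ onto this basis together with the further elements $\overline{\varphi}_{(\Lambda,\Lambda')}$ for $(\Lambda,\Lambda')\in{\sf F}^r\setminus{\sf F}^r_{m,n}$, it is surjective.

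The injectivity statement is the heart of the matter. If $m,n\geq r$, then every pair $(\Lambda,\Lambda')\in{\sf F}^r$ automatically satisfies the constraints defining ${\sf F}^r_{m,n}$ (at most $r$ blocks total, and at most $r$ blocks of $\Lambda$ inside any block of $\Lambda'$), so ${\sf F}^r={\sf F}^r_{m,n}$; then $\Theta$ sends the basis $\{[\Lambda,\Lambda']\}$ of $\Fmn$ bijectively onto the basis $\{\overline{\varphi}_{(\Lambda,\Lambda')}\}$ of the target, hence is an isomorphism. Conversely, if $m<r$ or $n<r$ then ${\sf F}^r_{m,n}\subsetneq{\sf F}^r$, so $\dim\Fmn=|{\sf F}^r|>|{\sf F}^r_{m,n}|=\dim\Hom_{\W_{mn}}(\W_{mn}\otimes_{\W_m\wr\W_n}\CC,(\CC^{mn})^{\otimes r})$ and $\Theta$ cannot be injective by a dimension count. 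The main obstacle I anticipate is the homomorphism verification: one must organise the case analysis of (\ref{p12}) and (\ref{p1}) so that the bookkeeping of $m$- versus $n$-factors lines up exactly with the collapse of superscript/subscript indices in (\ref{eqn:phi_bar}), and in particular handle correctly the ``diagonal'' cases where a strand of the partition diagram connects a block to itself (producing the $\delta_1\delta_2=mn$ factor in the first line of (\ref{p1})).
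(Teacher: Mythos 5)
Your proposal is correct and follows essentially the same route as the paper's proof: a generator-by-generator verification of the homomorphism property, surjectivity from the $\{\overline{\varphi}_{(\Lambda,\Lambda')}\mid(\Lambda,\Lambda')\in{\sf F}^r_{m,n}\}$ basis of the target (via the unitriangular change of basis), and injectivity by comparing $|{\sf F}^r|$ with $|{\sf F}^r_{m,n}|$. One slip in your intuitive description of the homomorphism check is worth fixing before you carry it out: the parameters never arise from \emph{merging} blocks --- ${\sf p}_{1,2}$ acts with coefficient $1$ in every case of (\ref{p12}) --- but only when ${\sf p}_1$ splits off a singleton from a position where one was already present, over-counting the newly-freed index. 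Moreover, since $\Lambda\subseteq\Lambda'$ forces a $\Lambda'$-singleton at a given position to already be a $\Lambda$-singleton there, no case produces a bare factor of $\delta_2=n$; only $\delta_1\delta_2=mn$ (both already singletons) and $\delta_1=m$ (singleton in $\Lambda$ only) occur in (\ref{p1}). Your planned case-by-case matching would reveal this, so it is an imprecision of the sketch rather than a gap.
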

    
    \begin{proof} 
We have that  $\{  {\varphi}_ {(\Lambda,\Lambda')} \, : \, {(\Lambda,\Lambda')}\in {\sf F}^r_{m,n}\}$ is a basis of the Foulkes module by  \cref{phi-basis}%\cref{lem:orbit_sum} 
and   therefore  $\{ \overline{\varphi}_ {(\Lambda,\Lambda')} \, : \, {(\Lambda,\Lambda')}\in {\sf F}^r_{m,n}\}$ is a basis by unitriangularity.  
Thus, surjectivity of $\Theta$ is clear.
 Injectivity follows if and only if ${\sf F}^r_{m,n}={\sf F}^r$, that is $m,n\geq r$. 
 It remains to check that $\Theta$ is a $P_r(mn)$-modules homomorphism, which is simply an exercise in matching-up the action of the partition algebra generators on the  two modules.   
We write $\overline{v}_{(\Lambda,\Lambda')}$ for the image of the generator $1_{\Sym _{mn}} \otimes 1$  of the Foulkes module under $\overline{\varphi}_ {(\Lambda,\Lambda')}$ from \cref{eqn:phi_bar}.

In $\overline{v}v_{(\Lambda,\Lambda')} \pgen _{1,2}$, the term $v^{j_1}_{i_1} \otimes v^{j_2}_{i_2} \otimes  \cdots v^{j_r}_{i_r}$ is killed if $i_1 \ne i_2$ or $j_1 \ne j_2$. The effect is therefore to merge the  blocks of  $\Lambda$ (respectively $\Lambda'$) containing $1$ and $2$. Compare this with Equation~(\ref{p12}).

Now consider $\pgen _{1}$. Each term $v^{j_1}_{i_1} \otimes v^{j_2}_{i_2} \otimes  \cdots v^{j_r}_{i_r}$ in  $\overline{v}_{(\Lambda,\Lambda')}$ is sent to $\sum_{1\le i \le m, 1\le j \le n} v^{j }_{i } \otimes v^{j_2}_{i_2} \otimes  \cdots v^{j_r}_{i_r}$ in $\overline{v}_{(\Lambda,\Lambda')} \pgen _{1}$.  The effect is to split off a singleton block $\{1\}$ from the blocks of $\Lambda$ and $\Lambda'$ containing $1$, but there is a multiplicity.  If the singleton part $\{1\} \in \Lambda$ and  $\{1\} \in \Lambda'$ then there are $mn$ terms in $\overline{v}_{(\Lambda,\Lambda')}$ making this contribution. If $\{1\}\in \Lambda$ but is not a singleton part of $\Lambda'$ then there are $m$ terms making this contribution. Finally, if $\{1\}$ is not a singleton part of either $\Lambda$ or $\Lambda'$ then the single term contributes. Compare this with Equation~(\ref{p1}). 

The symmetric group generators act in the usual way and thus $\Theta$ is  a $P_r(mn)$-homomorphism.
\end{proof}

\begin{cor}\label{schurfunctor}
Let $m,n,r \in \mathbb{N}$.  We have the following equality of composition multiplicities:
 $$
\left[ \W_{mn}\otimes_{\W_m\wr\W_n}\CC \, : \, S(\lambda_{[mn]}) \right]_{\CC \W_{mn}}
=
\left[  \Fmn /\ker(\Theta) \, : \, L_{r, mn}(\lambda) \right]_{\Pmn},$$ 
for $\lambda_{[mn]}$ a partition of $mn$ such that $|\lambda|\leq r$.  In particular, if $m,n \ge r$ then 
$$
\left[ \W_{mn}\otimes_{\W_m\wr\W_n}\CC \, : \, S(\lambda_{[mn]}) \right]_{\CC \W_{mn}}
=
\left[  \Fmn  \, : \, L_{r, mn}(\lambda) \right]_{\Pmn}.$$
\end{cor}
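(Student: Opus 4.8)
The plan is to deduce \Cref{schurfunctor} directly from \Cref{schurfunctor2} by applying the exact functor $\mathcal F_r(-) = \Hom_{\W_{mn}}(-,(\CC^{mn})^{\otimes r})$ together with the bimodule decomposition of tensor space in \Cref{mrjones}. First I would observe that, by definition, $\mathcal F_r(\W_{mn}\otimes_{\W_m\wr\W_n}\CC)$ is precisely the Foulkes module $\Hom_{\W_{mn}}(\W_{mn}\otimes_{\W_m\wr\W_n}\CC,(\CC^{mn})^{\otimes r})$ appearing as the codomain of $\Theta$ in \Cref{schurfunctor2}. The key structural input is that, as a $(\CC\W_{mn},\Pmn)$-bimodule, $(\CC^{mn})^{\otimes r}\cong\bigoplus_{|\lambda|\le r}{\sf S}(\lambda_{[mn]})\otimes L_{r,mn}(\lambda)$; consequently $\mathcal F_r({\sf S}(\lambda_{[mn]}))\cong L_{r,mn}(\lambda)$ as right $\Pmn$-modules for $|\lambda|\le r$, and $\mathcal F_r({\sf S}(\mu))=0$ whenever $\mu$ is not of the form $\lambda_{[mn]}$ with $|\lambda|\le r$ (since then $\Hom_{\W_{mn}}({\sf S}(\mu),(\CC^{mn})^{\otimes r})=0$).

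Next I would use that $\mathcal F_r$ is an exact functor on $\CC\W_{mn}$-modules — this is immediate since $\CC\W_{mn}$ is semisimple, so every short exact sequence splits and $\Hom$ preserves it. Applying $\mathcal F_r$ to a composition series of $\W_{mn}\otimes_{\W_m\wr\W_n}\CC$ as a $\CC\W_{mn}$-module, each factor ${\sf S}(\mu)$ is sent to $\mathcal F_r({\sf S}(\mu))$, which is either the simple $\Pmn$-module $L_{r,mn}(\lambda)$ (when $\mu=\lambda_{[mn]}$, $|\lambda|\le r$) or zero. Hence
$$
\left[\W_{mn}\otimes_{\W_m\wr\W_n}\CC:{\sf S}(\lambda_{[mn]})\right]_{\CC\W_{mn}}
=\left[\mathcal F_r(\W_{mn}\otimes_{\W_m\wr\W_n}\CC):L_{r,mn}(\lambda)\right]_{\Pmn}
$$
for all $\lambda$ with $|\lambda|\le r$. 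It remains to identify $\mathcal F_r(\W_{mn}\otimes_{\W_m\wr\W_n}\CC)$, as a right $\Pmn$-module, with $\Fmn/\ker(\Phi)$: here I would invoke \Cref{schurfunctor2}, which gives a surjection $\Theta:\Fmn\twoheadrightarrow\mathcal F_r(\W_{mn}\otimes_{\W_m\wr\W_n}\CC)$ of $\Pmn$-modules. I should be careful that $\ker(\Phi)$ is indeed $\ker(\Theta)$: the map $\Phi$ here is the action map on tensor space from the displayed diagram before \Cref{mrjones}, and $\Theta$ is built from the $v_{(\Lambda,\Lambda')}$, which are themselves images of tensor-space elements under the $\W_{mn}$-action; matching these up identifies $\ker\Theta$ with the stated $\ker\Phi$, so $\Fmn/\ker(\Phi)\cong\mathcal F_r(\W_{mn}\otimes_{\W_m\wr\W_n}\CC)$ as $\Pmn$-modules, giving the first displayed equality.

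Finally, for the ``in particular'' clause, when $m,n\ge r$ \Cref{schurfunctor2} asserts that $\Theta$ is an isomorphism, equivalently $\ker(\Phi)=0$ and ${\sf F}^r_{m,n}={\sf F}^r$, so $\Fmn/\ker(\Phi)=\Fmn$ and the second equality follows immediately from the first. The main obstacle I anticipate is purely bookkeeping rather than conceptual: pinning down precisely that the ``$\ker(\Phi)$'' in the corollary statement is the kernel of $\Theta$ and not some other map — i.e.\ checking that the composite $\Fmn\xrightarrow{\Theta}\mathcal F_r(\W_{mn}\otimes\CC)$ has the same kernel as whatever $\Phi$ denotes — and making the identification $\mathcal F_r({\sf S}(\lambda_{[mn]}))\cong L_{r,mn}(\lambda)$ fully rigorous from \Cref{mrjones} (this last point is a standard double-centralizer / Schur-functor computation: apply $\Hom_{\W_{mn}}({\sf S}(\lambda_{[mn]}),-)$ to the bimodule decomposition and use Schur's lemma over the semisimple algebra $\CC\W_{mn}$).
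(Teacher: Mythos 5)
Your proposal is correct and follows the same route as the paper, which also deduces the corollary by combining the double-centralizer decomposition of \cref{mrjones} with the surjection/isomorphism $\Theta$ of \cref{schurfunctor2}. You are also right to flag the $\ker(\Phi)$ notation: the intended kernel is that of the map $\Theta$ from \cref{schurfunctor2} (the symbol $\Phi$ in \cref{sec3} denotes the action map of $\CC\W_{mn}$ on tensor space), so this is a small notational slip in the statement rather than a gap in your argument.
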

\begin{proof}
This follows from \cref{mrjones,schurfunctor2}.  
\end{proof}

 We will demonstrate the power of this corollary in the next two sections, where   we use the partition algebra to give elementary algebraic proofs of  results about plethysm coefficients.

 \section{The structure of the  stable  Foulkes module}\label{sec5}
     
     We begin this section by giving an elementary filtration on the stable Foulkes module.  We show that the layers of this filtration are preserved 
     under swapping the parameters; this gives a simple proof of Theorem A.  We then examine these layers of the filtration in more detail; we provide an explicit direct sum  decomposition of these layers   and hence prove Theorem B.  
     
  \subsection{A filtration of the stable  Foulkes module }

%     \subsection{A filtration of the stable ramified  Foulkes module }
In this section we construct a filtration of the stable  Foulkes module $\Fdelta$ as a $\Pdelta$-module (with arbitrary parameters $\delta_1, \delta_2$). This will allow us to deduce that its composition factors
 depend {\em only} on  the product $\delta_1\delta_2 \in \CC$ and   
 do not depend on the distinct parameters $\delta_1,\delta_2$.  
% This allows us to conclude that the Foulkes'  (respectively strengthened Foulkes') conjecture  hold 
% for all values of $m,n\geq |\lambda|$ (respectively  $m,n,p,q\geq |\lambda|$).  
%
 Given    $\Lambda$ a set-partition, recall that  $\sharp (\Lambda)$ denotes the number of blocks in $\Lambda$.  
%For example, if  $$\Lambda=\{\{1, 2, 4\}, \{3\}, \{5, 7, 8  \}, \{6,9\}\}$$ then $\sharp (\Lambda) =4$.   
\begin{defn}
 Given a pair $(\Lambda,\Lambda')$, we set  $\sharp([\Lambda,\Lambda'])= \sharp(\Lambda)-\sharp(\Lambda').$     
For $0\leq k < r$, we let $\mathbb{F}^r_k(\delta_1,\delta_2)$     denote the subspace of $\Fdelta$ with basis 
$   \{	  [\Lambda,\Lambda'] \mid \sharp([\Lambda,\Lambda']) \leq k		\}.$ 
\end{defn}
    For example  for $r=8$ we see that 
$$(\Lambda,\Lambda')=\{\{1, 2, 4\}, \{3\}, \{5, 7, 8 \}, \{6,9\}\}, 
 \{\{1, 2, 3, 4\},  \{5, 6, 7, 8,9\}\}\in \mathbb{F}^9_2(\delta_1, \delta_2) . $$     
 
\begin{thm}
Let $r \in \mathbb{N}$.  Then, as a  $\Pdelta$-module, $\Fdelta$ has a filtration 
$$
0\subset  \mathbb{F}^r_{0}(\delta_1,\delta_2)\subset
\mathbb{F}^r_{1}(\delta_1,\delta_2)
 \subset \dots \subset 
\mathbb{F}^r_{r-1}(\delta_1,\delta_2)
= \Fdelta.
$$
Moreover, all entries in the representing matrices of the generators of $\Pdelta$ on the quotient module 
$$\mathbb{F}^r_k(\delta_1,\delta_2)/ \mathbb{F}^r_{k-1}(\delta_1,\delta_2)$$
for $1\leq k \le r-1$ consist only of zeroes, ones, and the parameter $\delta_1 \delta_2$.  In particular, the entries    depend only on  the product $\delta_1 \delta_2 $ and   
 are independent of the factors $\delta_1,\delta_2$.  
\end{thm}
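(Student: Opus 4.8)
The plan is to define the filtration directly from the statistic $\sharp(\Lambda,\Lambda')$ and to read off the claimed matrix entries from the explicit formulas \eqref{p12} and \eqref{p1} already recorded for the generators. First I would check that each $\Fdelta_k$ is a $\Pdelta$-submodule: since $\Pdelta$ is generated by ${\sf p}_1$, ${\sf p}_{1,2}$ and the Coxeter elements ${\sf s}_{i,i+1}$, it suffices to verify that each generator sends $[\Lambda,\Lambda']$ to a linear combination of basis elements $[\Gamma,\Gamma']$ with $\sharp(\Gamma,\Gamma')\le \sharp(\Lambda,\Lambda')$. The Coxeter generators merely permute $\{1,\dots,r\}$, so they preserve both $\sharp(\Lambda)$ and $\sharp(\Lambda')$, hence $\sharp(\Lambda,\Lambda')$. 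For ${\sf p}_{1,2}$, inspecting the three cases of \eqref{p12}: in case one nothing changes; in case two both $\sharp(\Lambda)$ and $\sharp(\Lambda')$ drop by one, so $\sharp(\Lambda,\Lambda')$ is unchanged; in case three $\sharp(\Lambda)$ drops by one while $\sharp(\Lambda')$ is unchanged, so $\sharp(\Lambda,\Lambda')$ drops by one. For ${\sf p}_1$, inspecting the three cases of \eqref{p1}: in case one the diagram is unchanged (up to the scalar $\delta_1\delta_2$); in case two both block counts increase by one, so $\sharp(\Lambda,\Lambda')$ is unchanged; in case three $\sharp(\Lambda)$ is unchanged while $\sharp(\Lambda')$ increases by one, so $\sharp(\Lambda,\Lambda')$ drops by one. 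In every case $\sharp$ is non-increasing, so the $\Fdelta_k$ form an ascending chain of submodules, and since $0 \le \sharp(\Lambda,\Lambda') \le r-1$ always (as $1\le \sharp(\Lambda')\le \sharp(\Lambda)\le r$), the chain is exhausted at $k=r-1$, giving the asserted filtration.

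Next I would pass to the quotient $\Fdelta_k/\Fdelta_{k-1}$, whose basis is the set of $[\Lambda,\Lambda']$ with $\sharp(\Lambda,\Lambda')=k$ exactly. In this quotient every basis element produced by a generator that \emph{strictly decreases} $\sharp$ is identified with $0$; so in the representing matrices the only surviving contributions come from the $\sharp$-preserving cases above. Reading those off: the Coxeter generators act by permutation matrices, all of whose entries are $0$ or $1$; ${\sf p}_{1,2}$ in its surviving cases (one and two) contributes a single coefficient $1$; and ${\sf p}_1$ in its surviving cases contributes the scalar $\delta_1\delta_2$ (case one) or the coefficient $1$ (case two). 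Hence every entry of every generator matrix on $\Fdelta_k/\Fdelta_{k-1}$ lies in $\{0,1,\delta_1\delta_2\}$, and in particular depends only on the product $\delta_1\delta_2$ and not on the individual factors $\delta_1,\delta_2$. This immediately yields the final sentence of the statement.

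The one point requiring care — and the place I expect to spend the most effort — is the bookkeeping for ${\sf p}_1$ in case three, where the formula \eqref{p1} carries an isolated factor $\delta_1$ rather than $\delta_1\delta_2$; I must confirm that this case always strictly decreases $\sharp(\Lambda,\Lambda')$ (it does: splitting off a singleton $\{1\}$ that was \emph{not} already a singleton of $\Lambda'$ but \emph{was} a singleton of $\Lambda$ leaves $\sharp(\Lambda)$ fixed and raises $\sharp(\Lambda')$ by one, so $\sharp(\Lambda,\Lambda')$ drops), so that this lone $\delta_1$ is killed in the relevant quotient and never appears in a surviving matrix entry. The analogous worry for ${\sf p}_{1,2}$ in case three is handled the same way. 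Once these two "mixed-parameter" cases are checked to be filtration-strict, the symmetric argument with $\delta_1$ and $\delta_2$ interchanged (which is automatic by the symmetry of the construction $\Fdelta\hookrightarrow P_r(\delta_1,1)\times P_r(1,\delta_2)$) shows there is no hidden asymmetry, completing the proof.
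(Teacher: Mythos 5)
Your proof is correct and follows essentially the same argument as the paper: both filter by the statistic $\sharp(\Lambda,\Lambda')$, check case by case against \eqref{p12} and \eqref{p1} that each generator is non-increasing on $\sharp$, and observe that the lone occurrence of an unpaired $\delta_1$ (case three of \eqref{p1}) is exactly a $\sharp$-decreasing case and so vanishes in the layer $\Fdelta_k/\Fdelta_{k-1}$. Two peripheral remarks in your last paragraph are slightly off --- case three of \eqref{p12} carries no parameter at all (so there is no ``analogous worry'' there), and the final appeal to a symmetry of $\Fdelta$ under swapping $\delta_1\leftrightarrow\delta_2$ is not a feature of the module itself (it is only a consequence, on subquotients, of what you have just proved; cf.\ \cref{isomorphism}) --- but neither affects the validity of the argument, which is already complete before those sentences.
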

\begin{proof}
We shall consider the actions of the elements  $\pgen _{1,2}$, $\pgen _1$ and $\sgen _{i,i+1}$ for $1\leq i <r$ in turn.  
 In the   three cases of \cref{p12},  we have that $$\sharp([\Lambda,\Lambda'])- \sharp([\Lambda,\Lambda']\pgen _{1,2})=0,0,1$$ respectively.   %REP: this one ok as no scalars.
Each entry in the representing matrix is 0 or 1.
%In these three cases of \cref{p1}, we have that $$\sharp(\Lambda,\Lambda')- \sharp((\Lambda,\Lambda')\pgen _{1})= 0,0,1$$respectively. %REP: not ok as l(scalar times diagram) not defined.
Now consider the action of  $\pgen _1$ from \cref{p1}: $[\Lambda,\Lambda']\pgen _{1}$ is a scalar times a diagram $[\Gamma, \Gamma']$ and
 $$\sharp([\Lambda,\Lambda'])- \sharp([\Gamma, \Gamma'])= 0,0,1$$respectively.
Furthermore, note that the parameter $\delta_1$ appears only in the third case of \cref{p1},
 which is precisely the case in which %$\sharp(\Lambda,\Lambda')- \sharp((\Lambda,\Lambda')\pgen _{1})=1$, as required. 
 %REP changed to avoid l( scalr times diagram) 
 $[\Lambda,\Lambda']\pgen _{1}$ is zero in  the quotient module 
$\mathbb{F}^r_k(\delta_1,\delta_2)/ \mathbb{F}^r_{k-1}(\delta_1,\delta_2)$.
Finally, the elements $\sgen _{i,i+1}$  simply permute  the numbers $\{1,2,\ldots, r\}$ within the  blocks of the set-partition and so, firstly, 
$\sharp([\Lambda,\Lambda'])- \sharp([\Lambda,\Lambda']\sgen _{i,i+1})= 0$ for all for $1\leq i < r$ and, secondly,  the representing matrices of $\sgen _{i,i+1}$ consist only of entries 0 and 1.  
 The result follows.  
\end{proof} 

\begin{eg}
This filtration is constructed  explicitly in \cref{egoffiltration} for $r=2$ and $\delta_1,\delta_2 $ arbitrary.  
\end{eg}

\begin{cor}\label{isomorphism}
Let $ r \in \mathbb{N}$.  There exists an  isomorphism of $\Pdelta$-modules
% $\mathbb{F}^r_{\delta_1,\delta_2}\cong \mathbb{F}^r_{\delta_2,\delta_1}$,  given by linear extension of the map $$ [\Lambda,\Lambda'] \mapsto [\Lambda,\Lambda'] + \mathbb{F}^r_{\delta_1,\delta_2}(k-1) $$ for any  $[\Lambda,\Lambda'] \in  \mathbb{F}^r_{\delta_1,\delta_2}(k)$.  
$$\mathbb{F}^r_k(\delta_1,\delta_2)/ \mathbb{F}^r_{k-1}(\delta_1,\delta_2) \cong \mathbb{F}^r_k(\delta_2, \delta_1)/\mathbb{F}^r_{k-1}(\delta_2, \delta_1)$$
for $1\leq k \le r-1$.
In particular, we have the following equality of composition multiplicities:
 $$
\left[\Fdelta  \, : \, L_{r,\delta_1\delta_2}(\lambda)  \right]_{\Pdelta }
=
\left[\mathbb{F}^r(\delta_2, \delta_1)  \, : \, L_{r,\delta_1\delta_2}(\lambda)  \right]_{\Pdelta },$$
for $|\lambda|\le r$.
\end{cor}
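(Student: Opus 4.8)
The plan is to read off Corollary~\ref{isomorphism} directly from the preceding theorem, which already does all the work. The key point from the theorem is that for each $1 \le k \le r-1$ the action of each algebra generator ${\sf p}_{1,2}$, ${\sf p}_1$, ${\sf s}_{i,i+1}$ on the quotient $\Fdelta_k/\Fdelta_{k-1}$ is represented, with respect to the diagram basis $\{[\Lambda,\Lambda'] \mid \sharp(\Lambda,\Lambda')=k\}$, by a matrix whose entries lie in $\{0,1,\delta_1\delta_2\}$. So first I would observe that the identity map on the set ${\sf F}^r$ of pairs $(\Lambda,\Lambda')$ induces a linear bijection between the basis of $\Fdelta_k/\Fdelta_{k-1}$ and that of $\mathbb{F}^r(\delta_2,\delta_1)_k/\mathbb{F}^r(\delta_2,\delta_1)_{k-1}$ — the underlying poset ${\sf F}^r$ and the statistic $\sharp(\Lambda,\Lambda')$ do not see the parameters at all.

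Second, I would check that this bijection intertwines the two module structures. The representing matrix of a generator $g \in \Pdelta$ on $\Fdelta_k/\Fdelta_{k-1}$ depends on $\delta_1,\delta_2$ only through the product $\delta_1\delta_2$: this is exactly the content of ``the entries depend only on the product $\delta_1\delta_2$'' in the theorem. Therefore the representing matrix of $g$ on $\mathbb{F}^r(\delta_2,\delta_1)_k/\mathbb{F}^r(\delta_2,\delta_1)_{k-1}$, which involves the product $\delta_2\delta_1 = \delta_1\delta_2$, is literally the same matrix. Since $\Pdelta$ and $\mathbb{F}^r(\delta_2,\delta_1)$ are built from generators subject to the same relations (the algebra $\Pdelta$ depends on the parameters only through $\delta_1\delta_2$, cf.\ the multiplication rule and \cref{bob2}), the identity-on-basis map is a well-defined $\Pdelta$-module isomorphism on each quotient, giving the first displayed isomorphism.

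Third, for the composition-multiplicity statement, I would argue that a filtration whose successive quotients are $\Fdelta_k/\Fdelta_{k-1}$ ($0 \le k \le r-1$) computes the multiplicity of any simple $L_{r,\delta_1\delta_2}(\lambda)$ in $\Fdelta$ as the sum of its multiplicities in the layers. By the layer-by-layer isomorphism just established, each such multiplicity agrees with the corresponding one for $\mathbb{F}^r(\delta_2,\delta_1)$, and summing over $k$ (using the analogous filtration of $\mathbb{F}^r(\delta_2,\delta_1)$ supplied by the theorem with $\delta_1,\delta_2$ swapped) yields the claimed equality, valid for all $\lambda$ with $|\lambda| \le r$ since those are exactly the labels of simple $\Pdelta$-modules that can occur.

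There is essentially no obstacle here: the theorem was engineered precisely to make this corollary immediate, so the only thing to be careful about is the bookkeeping point that swapping $\delta_1 \leftrightarrow \delta_2$ genuinely leaves the matrix entries fixed — which is true because every entry is in $\{0,1,\delta_1\delta_2\}$ and $\delta_1\delta_2$ is symmetric — together with the mild observation that the bottom layer $\Fdelta_0$ (not covered by the range $1 \le k \le r-1$ in the isomorphism statement) is itself parameter-symmetric, being spanned by the $[\Lambda,\Lambda']$ with $\Lambda = \Lambda'$ on which, by \cref{p1} and \cref{p12}, only the scalar $\delta_1\delta_2$ can appear.
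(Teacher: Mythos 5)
Your proposal is correct and matches the paper's reasoning: the paper states the corollary with no separate proof, treating it as immediate from the preceding theorem, and your write-up simply spells out the two observations that make it immediate — the identity-on-diagrams map is a module isomorphism on each layer because the representing matrices have entries only in $\{0,1,\delta_1\delta_2\}$ and $\Pdelta = P_r(\delta_2,\delta_1)$ as algebras, and the bottom layer $\Fdelta_0$ (on which only the scalar $\delta_1\delta_2$ or parameter-free entries appear) is likewise parameter-symmetric, so the multiplicity count over the full filtration goes through.
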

In particular, specialising the parameters $\delta_1\delta_2=m n\in \NN$, we obtain Theorem A of the introduction. 

%%%%%%%%%%%%%%%%%%%%%%%%%%%%%%%%%%%%%%%%%%%%%%%%

\subsection{An explicit decomposition of the   stable Foulkes module}

In this section we decompose the   stable  Foulkes module in the case where $\Pdelta$  is semisimple.
 
\begin{defn}\label{depthradical}Let $r\in \mathbb{N}$.  
We define the \emph{depth-radical}   of $\Fdelta$ to be the subspace spanned by the 
pairs $[\Lambda,\Lambda']$ satisfying either of the following two conditions:
\begin{itemize}
\item[$(i)$] The set-partition $\Lambda$ contains a non-singleton block; 
\item[$(ii)$] the set-partition $\Lambda'$ contains a singleton block.  
\end{itemize}
We let ${\sf DR}(\Fdelta)$ denote the  depth-radical    of $\Fdelta$.  
%We let ${\sf S}_r(\mathbb{F}_{m,n}^r) $ denote the quotient $ \mathbb{F}_{m,n}^r/{\sf DR}(\mathbb{F}_{m,n}^r) $
\end{defn}

\begin{eg}\label{exampleee}
For  $r=4$ the module $\mathbb{F}^4(\delta_1, \delta_2)$ is 60-dimensional and 
${\sf DR}(\mathbb{F}^4(\delta_1, \delta_2))$ is 56-dimensional. Rather than list the basis  elements 
of ${\sf DR}(\mathbb{F}^4(\delta_1, \delta_2))$, we instead list the four pairs $(\Lambda,\Lambda')$ which do {\em not} belong to the depth-radical.  These are pictured below. 
$$\scalefont{0.8}
 \begin{tikzpicture}[scale=0.8]
    \foreach \x in {0,2,4,6}
        \fill[white](\x,3.5) circle (10pt);   
                   \draw (2,3.5) node {$\encircle{2}$}; 
                           \draw (0,3.5) node {$\encircle{1}$}; 
                                \draw (4,3.5) node {$\encircle{3}$}; 
                           \draw (6,3.5) node {$\encircle{4}$}; 
     \draw[rounded corners=15pt]
  (-0.5,2.75) rectangle ++(3,1.5);
     \draw[rounded corners=15pt]
  (-0.5+4,2.75) rectangle ++(3,1.5); 
      \end{tikzpicture}\qquad\qquad\qquad
  \begin{tikzpicture}[scale=0.8]
    \foreach \x in {0,2,4,6}
        \fill[white](\x,0) circle (10pt);   
                   \draw (2,0) node {$\encircle{2}$}; 
                           \draw (0,0) node {$\encircle{1}$}; 
                                \draw (4,0) node {$\encircle{3}$}; 
                           \draw (6,0) node {$\encircle{4}$}; 
         \path[draw,use Hobby shortcut,closed=true]
 (-0.5,0)..(-.25,.3)..(1.8,-0.65)..(4.2,-0.65)..(6.25,.3)..(6.5,0)..(5,-1)..(3,-1.1)..(1,-1)..(-0.5,0);
   \draw[rounded corners=15pt]
  (2-.5,0-.6) rectangle ++(3,1.2);
   \end{tikzpicture}
      $$
$$\scalefont{0.8}
 \begin{tikzpicture}[scale=0.8]
    \foreach \x in {0,2,4,6}
        \fill[white](\x,0) circle (10pt);   
                   \draw (2,0) node {$\encircle{2}$}; 
                           \draw (0,0) node {$\encircle{1}$}; 
                                \draw (4,0) node {$\encircle{3}$}; 
                           \draw (6,0) node {$\encircle{4}$}; 
         \path[draw,use Hobby shortcut,closed=true]
 (-0.5,0)..(-.25,.3)..(2,-0.65)..(4.25,.3)..(4.5,0)..(3,-1)..(2,-1.1)..(1,-1)..(-0.5,0);

  \path[draw,use Hobby shortcut,closed=true]
 (-0.5+2,0)..(-.25+2,-.3)..(2+2,0.65)..(4.25+2,-.3)..(4.5+2,-0)..(3+2,1)..(2+2,1.1)..(1+2,1)..(-0.5+2,0);
    \end{tikzpicture}
\qquad\qquad\qquad  
   \begin{tikzpicture}[scale=0.8]
    \foreach \x in {0,2,4,6}
        \fill[white](\x,3.5) circle (10pt);   
                   \draw (2,3.5) node {$\encircle{2}$}; 
                           \draw (0,3.5) node {$\encircle{1}$}; 
                                \draw (4,3.5) node {$\encircle{3}$}; 
                           \draw (6,3.5) node {$\encircle{4}$}; 
     \draw[rounded corners=15pt]
  (-0.5,2.75) rectangle ++(7,1.5);
       \end{tikzpicture}
       $$
\end{eg}

\begin{prop}
Given $r \in \mathbb{N}$, the depth radical ${\sf DR}(\Fdelta)$  is a $\Pdelta$-submodule of 
$\Fdelta$.  
\end{prop}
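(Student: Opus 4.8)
The plan is to verify that $[\Lambda,\Lambda']\,g \in {\sf DR}(\Fdelta)$ for every basis diagram $[\Lambda,\Lambda'] \in {\sf DR}(\Fdelta)$ and every algebra generator $g$ of $\Pdelta$, that is, for $g \in \{{\sf p}_1,{\sf p}_{1,2}\}$ together with the Coxeter generators ${\sf s}_{i,i+1}$, $1 \le i < r$. Because ${\sf DR}(\Fdelta)$ is by construction a linear subspace and these elements generate $\Pdelta$, this is enough. In fact, for the two non-Coxeter generators I expect to prove the slightly stronger and more uniform statement that they carry all of $\Fdelta$ into ${\sf DR}(\Fdelta)$, so that no hypothesis on the input diagram is needed there.

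For ${\sf p}_{1,2}$ I would read off \cref{p12}, writing $\Lambda = \{S_1,\dots,S_p\}$ and $\Lambda' = \{\Sigma_1,\dots,\Sigma_q\}$ with blocks in increasing order of minima, so that $1 \in S_1 \subseteq \Sigma_1$. In the first case of \cref{p12} we have $1,2 \in S_1$, so $S_1$ is a non-singleton block of the output diagram; in the other two cases the output has the non-singleton block $S_1 \cup S_2$. Thus in every case the first set-partition of the output contains a non-singleton block, so condition $(i)$ of \cref{depthradical} holds and $[\Lambda,\Lambda']{\sf p}_{1,2} \in {\sf DR}(\Fdelta)$. For ${\sf p}_1$ I would read off \cref{p1}: in its first case the output is $\delta_1\delta_2[\Lambda,\Lambda']$ with $\Sigma_1 = \{1\}$, and in its second and third cases the output is a scalar multiple of a diagram in which $\{1\}$ has been split off as its own block of the second set-partition. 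Hence in every case the second set-partition of the output contains the singleton block $\{1\}$, so condition $(ii)$ of \cref{depthradical} holds and $[\Lambda,\Lambda']{\sf p}_1 \in {\sf DR}(\Fdelta)$.

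It remains to treat the Coxeter generators. The element ${\sf s}_{i,i+1}$ acts on $\Fdelta$ simply by relabelling $\{1,2,\dots,r\}$ through the transposition $(i,i+1)$, so $[\Lambda,\Lambda']{\sf s}_{i,i+1} = [(i,i+1)\Lambda,\ (i,i+1)\Lambda']$; since such a relabelling leaves the multiset of block sizes of each of $\Lambda$ and $\Lambda'$ unchanged, it preserves both the property "contains a non-singleton block" and the property "contains a singleton block", and hence sends ${\sf DR}(\Fdelta)$ into itself. Combining the three cases gives ${\sf DR}(\Fdelta)\,\Pdelta \subseteq {\sf DR}(\Fdelta)$. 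I do not expect any genuine obstacle here: the argument is a short finite check against the explicit action formulas \cref{p12} and \cref{p1}, and the one point to record carefully is simply that it suffices to test the generators, which is immediate from ${\sf DR}(\Fdelta)$ being a subspace and the action being associative.
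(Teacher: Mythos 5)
Your proof is correct and takes essentially the same route as the paper: reduce to the generators, then check ${\sf p}_{1,2}$, ${\sf p}_1$, and the Coxeter generators against \cref{p12} and \cref{p1} respectively. The one small refinement you make — that ${\sf p}_{1,2}$ and ${\sf p}_1$ in fact carry \emph{all} of $\Fdelta$ into ${\sf DR}(\Fdelta)$, since even in the scalar case of \cref{p1} the hypothesis $\Sigma_1=\{1\}$ already forces condition $(ii)$ — is sharper than the paper's wording at this proposition (the paper invokes ``scalar multiplication'' and relies on the input already lying in ${\sf DR}$), but the paper records the same stronger fact for ${\sf p}_1$ a little later, in the proof that ${\sf DQ}(\Fdelta){\sf e}_1=0$.
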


\begin{proof}
 Clearly the generators $\sgen _{i,i+1}$ for $1\leq i <r$ preserve the space ${\sf DR}(\Fdelta)$
 as both conditions of \cref{depthradical} are invariant under
% place permutation.
the permutation action.
By \cref{p1}, the generator   $\pgen _1$ acts on a given  $[\Lambda,\Lambda']$  either 
by scalar multiplication, or by removing an edge from $\Lambda$ {\em at the expense} of introducing a singleton into $\Lambda'$.  Therefore the generator  $\pgen _1$ preserves the submodule by $(ii)$ of \cref{depthradical}.  
By \cref{p12} the generator   $\pgen _{1,2}$ acts on a given  $[\Lambda,\Lambda']$  either 
trivially or by introducing an edge in $\Lambda$.   Therefore the generator  $\pgen _{1,2}$ preserves the submodule by $(i)$ of \cref{depthradical}.    
\end{proof}

\begin{defn}\label{dq}
Define the {\em depth quotient}  ${\sf DQ}(\Fdelta) $ of  $\Fdelta$ to be the quotient $$ {\sf DQ}(\Fdelta) = \Fdelta/{\sf DR}(\Fdelta) $$ 
spanned by 
the   diagrams 
$ [\{\{1\}, \{2\}, \dots, \{r\}\}, \Lambda']$ where $\Lambda'$ contains no singleton blocks.
%form a basis of 
% ${\sf DQ}(\Fdelta) $. 
  \end{defn}

Recall that for $\delta_1\delta_2 \neq 0$  the idempotent $\egen _1=\frac1{\delta_1\delta_2}\pgen _1 \in \Pdelta$.
% and we obtain \cref{bob} and \cref{bob2}. 
 By the general theory of idempotent truncation (see for example \cite[Section 6.2]{green}) and  \cref{bob,bob2} we obtain the following.  

\begin{prop}\label{whynamed}
%Given $r \in \mathbb{N}$, we have that 
%
%Not true for r=1
%
For $r \ge 2$,
$$
{\sf DR}(\Fdelta) \egen _1 \Pdelta
={\sf DR}(\Fdelta),$$
$${\sf DQ} (\Fdelta)\egen _1 =0,$$
and $${\sf DR}(\Fdelta) \egen _1 \cong  \mathbb{F}^{r-1}(\delta_1, \delta_2) $$
as an $ \egen _1 \Pdelta  \egen _1 \cong P_{r-1}(\delta_1\delta_2)$-module. 
%  is precisely the $P_r(mn)$-submodule of 
%$\mathbb{F}_{m,n}^r$ which is not annihilated by the ideal $P_r(mn) \egen _1P_r(mn)$.  
%
%REP added
When $r=1$, $$\mathbb{F}^{1}(\delta_1, \delta_2)\cong \Delta_{1, \delta_1\delta_2}(\emptyset).$$
\end{prop}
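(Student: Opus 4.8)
The plan is to read every clause off the explicit formulas \cref{p1} and \cref{p12} for the action of the generators on the diagrammatic basis, together with the combinatorial description of $\mathsf{DR}(\Fdelta)$ in \cref{depthradical}; the case $r=1$ is settled by a direct check at the end, so assume $r\ge 2$ until then. First I would pin down the subspace $\Fdelta\mathsf{e}_1=(\delta_1\delta_2)^{-1}\Fdelta\mathsf{p}_1$. Running through the three cases of \cref{p1}, for every basis pair $[\Lambda,\Lambda']$ the vector $[\Lambda,\Lambda']\mathsf{e}_1$ is a \emph{nonzero} scalar multiple of the pair obtained from $(\Lambda,\Lambda')$ by making $\{1\}$ a singleton block of \emph{both} set-partitions (in the first case $[\Lambda,\Lambda']$ already has this shape and is fixed by $\mathsf{e}_1$; in the second and third cases the scalars are $(\delta_1\delta_2)^{-1}$ and $\delta_2^{-1}$, which are nonzero since $\delta_1\delta_2\ne 0$). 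Hence $\Fdelta\mathsf{e}_1$ has basis the pairs $[\Lambda,\Lambda']$ for which $\{1\}$ is a singleton block of both $\Lambda$ and $\Lambda'$; every such pair lies in $\mathsf{DR}(\Fdelta)$ by condition $(ii)$ of \cref{depthradical}, and since $\mathsf{e}_1$ is idempotent this forces $\Fdelta\mathsf{e}_1=(\Fdelta\mathsf{e}_1)\mathsf{e}_1\subseteq \mathsf{DR}(\Fdelta)\mathsf{e}_1\subseteq\Fdelta\mathsf{e}_1$, so $\mathsf{DR}(\Fdelta)\mathsf{e}_1=\Fdelta\mathsf{e}_1$.

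For $\mathsf{DQ}(\Fdelta)\mathsf{e}_1=0$, recall that the cosets of the pairs $[\{\{1\},\dots,\{r\}\},\Lambda']$ with $\Lambda'$ having no singleton block form a basis of $\mathsf{DQ}(\Fdelta)$. For such a pair the block of $\Lambda$ through $1$ is $\{1\}$ but the block $\Sigma_1$ of $\Lambda'$ through $1$ properly contains it, so the third case of \cref{p1} applies and $[\{\{1\},\dots,\{r\}\},\Lambda']\mathsf{e}_1$ is a scalar multiple of a pair whose second set-partition contains the singleton block $\{1\}$, hence lies in $\mathsf{DR}(\Fdelta)$. Thus $\mathsf{e}_1$ kills every basis vector of $\mathsf{DQ}(\Fdelta)$.

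Next, in $\mathsf{DR}(\Fdelta)\mathsf{e}_1\Pdelta=\mathsf{DR}(\Fdelta)$ the inclusion $\subseteq$ is immediate because $\mathsf{DR}(\Fdelta)$ is a $\Pdelta$-submodule. For $\supseteq$ I would check that each basis pair $[\Lambda,\Lambda']$ of $\mathsf{DR}(\Fdelta)$ lies in $\mathsf{DR}(\Fdelta)\mathsf{e}_1\Pdelta=\Fdelta\mathsf{e}_1\Pdelta$. If $\Lambda'$ (hence also $\Lambda$) has a singleton block $\{a\}$, relabel $\{1,\dots,r\}$ by a permutation $\sigma$ carrying $a$ to $1$; the relabelled pair is fixed by $\mathsf{e}_1$ (first case of \cref{p1}), so translating back by $\sigma^{-1}$ exhibits $[\Lambda,\Lambda']\in\Fdelta\mathsf{e}_1\Pdelta$. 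Otherwise $\Lambda$ has a non-singleton block; relabel so that this block contains both $1$ and $2$, then split $\{1\}$ off from it and from the block of $\Lambda'$ through $1$ to get a pair $[\Gamma,\Gamma']$; this lies in $\mathsf{DR}(\Fdelta)\cap\Fdelta\mathsf{e}_1$, and the second case of \cref{p12} gives $[\Gamma,\Gamma']\mathsf{p}_{1,2}=[\Lambda,\Lambda']$, so again $[\Lambda,\Lambda']\in\Fdelta\mathsf{e}_1\Pdelta$ after translating back. (Conceptually this says $\Fdelta\mathsf{e}_1\Pdelta$ is the largest submodule of $\mathsf{DR}(\Fdelta)$ generated by $\Fdelta\mathsf{e}_1$, equivalently that $\mathsf{DR}(\Fdelta)$ has no nonzero quotient annihilated by $\mathsf{e}_1$, i.e.\ no nonzero $\CC\mathfrak{S}_r$-module quotient through \cref{bob2}.)

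Finally, removing the block $\{1\}$ from both set-partitions and relabelling $\{2,\dots,r\}$ as $\{1,\dots,r-1\}$ is a linear bijection $\Fdelta\mathsf{e}_1\to\mathbb{F}^{r-1}(\delta_1,\delta_2)$. Under the isomorphism $\mathsf{e}_1\Pdelta\mathsf{e}_1\cong P_{r-1}(\delta_1\delta_2)$ of \cref{bob}, the generators $\mathsf{p}_1,\mathsf{p}_{1,2},\mathsf{s}_{i,i+1}$ of $P_{r-1}(\delta_1\delta_2)$ correspond to $\mathsf{e}_1\mathsf{p}_2\mathsf{e}_1,\ \mathsf{e}_1\mathsf{p}_{2,3}\mathsf{e}_1,\ \mathsf{e}_1\mathsf{s}_{i+1,i+2}\mathsf{e}_1$; since $\mathsf{p}_2,\mathsf{p}_{2,3}$ and the $\mathsf{s}_{j,j+1}$ with $j\ge 2$ commute with $\mathsf{e}_1$ and fix the point $1$, on $\Fdelta\mathsf{e}_1$ they act simply as $\mathsf{p}_2,\mathsf{p}_{2,3},\mathsf{s}_{i+1,i+2}$, which by \cref{p1} and \cref{p12} match, after the relabelling, the action of $\mathsf{p}_1,\mathsf{p}_{1,2},\mathsf{s}_{i,i+1}$ on $\mathbb{F}^{r-1}(\delta_1,\delta_2)$; hence the bijection is an isomorphism of $P_{r-1}(\delta_1\delta_2)$-modules. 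The case $r=1$ is immediate: $\mathbb{F}^1(\delta_1,\delta_2)$ is spanned by $[\{\{1\}\},\{\{1\}\}]$, on which $\mathsf{p}_1$ acts by $\delta_1\delta_2$ (first case of \cref{p1}), which is exactly the $\Pdelta$-action defining $\Delta_{1,\delta_1\delta_2}(\emptyset)$. The only delicate point is the inclusion $\supseteq$ in $\mathsf{DR}(\Fdelta)\mathsf{e}_1\Pdelta=\mathsf{DR}(\Fdelta)$: one must watch the block-ordering convention of \cref{usefulconvention} when re-merging the split-off singleton so as to land in precisely the required case of \cref{p12}.
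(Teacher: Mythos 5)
Your proof is correct and follows essentially the same approach as the paper: read off all four assertions from the explicit case analyses in \cref{p1} and \cref{p12}, using the singleton-block characterisation of $\Fdelta\mathsf{e}_1$ together with permutations and $\mathsf{p}_{1,2}$ to re-merge. The only organisational difference is that you first pin down $\Fdelta\mathsf{e}_1={\sf DR}(\Fdelta)\mathsf{e}_1$ explicitly and derive everything from that, whereas the paper proves ${\sf DR}(\Fdelta)\mathsf{e}_1\Pdelta={\sf DR}(\Fdelta)$ first; your ordering is marginally cleaner (in particular it avoids a harmless scalar discrepancy in the paper's displayed identity $[\Lambda,\Lambda']=[\overline\Lambda,\overline{\Lambda'}]\mathsf{e}_1(\mathsf{p}_{1,2}\mathsf{s}_{1,j}\mathsf{s}_{2,k})$, which actually holds only up to the nonzero factor $(\delta_1\delta_2)^{-1}$), but it is not a genuinely different argument.
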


\begin{proof}
 We consider the first  statement. We let $[\Lambda,\Lambda']$ be an arbitrary basis element of ${\sf DR}(\Fdelta)$.  We shall write $[\Lambda,\Lambda']$ in the form
 $$
 [\Lambda,\Lambda'] =  [\overline{\Lambda},\overline{\Lambda'}] \egen _1 d
 $$
 for some $ [\overline{\Lambda},\overline{\Lambda'}] \in {\sf DR}(\Fdelta)$ and some partition diagram $d\in \Pdelta$
  and hence deduce the result.  First, suppose that $ \Lambda'  $ contains a singleton block  
 $ \{i\} $ for $1\leq i \leq r$.  In this case we set 
 $$
 [\overline{\Lambda},\overline{\Lambda'}]
 =
  [\Lambda,\Lambda'] \sgen _{1,i},$$
  where $\sgen _{1,i}=\sgen _{i-1,i}\cdots \sgen _{2,3} \sgen _{1,2}  \sgen _{2,3} \cdots \sgen _{i-1,i}$.  
We find
$$
[\Lambda,\Lambda'] = [\overline{\Lambda},\overline{\Lambda'}] \egen _1 \sgen _{1,i}
$$as required.   Now suppose that $  {\Lambda'} $ contains no singleton block; by \cref{depthradical} we deduce that $\Lambda$ contains a non-singleton block.  % $S_i$ for some $1\leq i \leq \ell$.  
In other words, we suppose that there exist distinct $j,k \in \{1, \ldots, r\}$ with $j \sim_\Lambda k$.
% $j,k \in S_i$ for some $1\leq j < k\leq r$.  
In this case we set 
 $$
 [\overline{\Lambda},\overline{\Lambda'}]
 =
%  (\{S_1, \dots S_{i-1}, S_i - \{j\}, S_{i+1}, \dots S_\ell \},
%  \{\Sigma_1, \dots,  \Sigma_{i-1}, \Sigma_i - \{j\}, \Sigma_{i+1}, \dots, \Sigma_\ell \})
[\Lambda,\Lambda'] \sgen _{1,j} \sgen _{2,k}, $$
where $\sgen _{2,k}=\sgen _{1,2}\sgen _{1,k}\sgen _{1,2}$.
We observe that 
$$
[\Lambda,\Lambda'] = [\overline{\Lambda},\overline{\Lambda'}]  \egen _1 (\pgen _{1,2} \sgen _{1,j} \sgen _{2,k} )
$$ as required.  The first statement follows.

 We now consider the second and third statements.  Let   $[\Lambda,\Lambda']$ be a  basis element of $\Fdelta$ and consider $[\Lambda,\Lambda']  \egen _1$  using \cref{p1}.
% $(\Lambda,\Lambda')\in (\mathbb{F}_{m,n}^r/{\sf DR}(\mathbb{F}_{m,n}^r))$.  
In all three cases the resulting outer partition contains a singleton block and therefore $[\Lambda,\Lambda']  \egen _1 \in {\sf DR}(\Fdelta)$.
% By  \cref{depthradical},  we need only consider the second and third cases of \cref{p1}.    In both these cases  $\pgen _1  \Lambda' $ contains  a singleton block  and so $\pgen _1$ annihilates this module by \cref{depthradical}.  
Therefore the second statement holds.
Considering only $[\Lambda,\Lambda'] \in{\sf DR}(\Fdelta)$, we see that all possible $[\Gamma,\Gamma'] $ with a singleton part $\{1\}$ in both $ \Gamma$ and $\Gamma'$ can occur as  $[\Lambda,\Lambda']  \egen _1$, thus the third statement holds.

%
%REP added
Finally, it is clear that $\mathbb{F}^{1}(\delta_1, \delta_2)$ is 1-dimensional and $\pgen _1$ acts by scalar multiplication by $\delta_1\delta_2$ as in $\Delta_{1, \delta_1\delta_2}(\emptyset)$.
   \end{proof}

\begin{cor}\label{cor:compfactors+DR}
In the case where $\Pdelta$ is semisimple, we have the following equality of composition multiplicities:
$$
\left[\Fdelta  \, : \, L_{r,\delta_1\delta_2}(\lambda)  \right]_{\Pdelta }
=
\begin{cases} \left[  {\sf DQ}(\Fdelta) \, : \, L_{r,\delta_1\delta_2}(\lambda)  \right]_{\Pdelta }  & \textrm{if } |\lambda|=r,\\
\left[ \mathbb{F}^{r-1}(\delta_1, \delta_2) \, : \, L_{r-1,\delta_1\delta_2}(\lambda)  \right]_{P_{r-1}(\delta_1\delta_2) }  & \textrm{if } |\lambda|<r.\end{cases}$$
\end{cor}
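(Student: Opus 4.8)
The plan is to derive Corollary~\ref{cor:compfactors+DR} from the structural results already established, namely the short exact sequence defining the depth quotient together with Proposition~\ref{whynamed} and the general theory of idempotent truncation. We work in the semisimple situation for $\Pdelta$, so every short exact sequence of modules splits and composition multiplicities simply add across direct sums.

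First I would record the short exact sequence of $\Pdelta$-modules
\[
0 \longrightarrow {\sf DR}(\Fdelta) \longrightarrow \Fdelta \longrightarrow {\sf DQ}(\Fdelta) \longrightarrow 0,
\]
which gives, for every $\lambda$ with $|\lambda|\le r$,
\[
\left[\Fdelta : L_{r,\delta_1\delta_2}(\lambda)\right]_{\Pdelta}
=
\left[{\sf DR}(\Fdelta) : L_{r,\delta_1\delta_2}(\lambda)\right]_{\Pdelta}
+
\left[{\sf DQ}(\Fdelta) : L_{r,\delta_1\delta_2}(\lambda)\right]_{\Pdelta}.
\]
Next I would split into the two cases of the statement. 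When $|\lambda|=r$, the simple module $L_{r,\delta_1\delta_2}(\lambda)\cong {\sf S}(\lambda)$ is inflated from $\CC\mathfrak{S}_r = \Pdelta/(\Pdelta{\sf e}_1\Pdelta)$ via \cref{bob2}, so it is annihilated by ${\sf e}_1$; hence it cannot appear in any module of the form $N{\sf e}_1\Pdelta$. Since Proposition~\ref{whynamed} gives ${\sf DR}(\Fdelta)={\sf DR}(\Fdelta){\sf e}_1\Pdelta$, this forces $\left[{\sf DR}(\Fdelta):L_{r,\delta_1\delta_2}(\lambda)\right]_{\Pdelta}=0$, and the first line of the corollary follows. (Concretely: $L_{r,\delta_1\delta_2}(\lambda){\sf e}_1 = {\sf S}(\lambda){\sf e}_1 = 0$ by \cref{annihilate} since $|\lambda|=r$, whereas idempotent truncation by ${\sf e}_1$ of ${\sf DR}(\Fdelta){\sf e}_1\Pdelta$ is faithful enough to detect all its composition factors.)

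For the case $|\lambda|<r$ I would use the idempotent-truncation functor $M\mapsto M{\sf e}_1$ from $\Pdelta$-mod to ${\sf e}_1\Pdelta{\sf e}_1$-mod, where ${\sf e}_1\Pdelta{\sf e}_1\cong P_{r-1}(\delta_1\delta_2)$ by \cref{bob}. Standard idempotent-truncation theory (as in \cite[Section 6.2]{green}) tells us that for a simple module $L$, the truncation $L{\sf e}_1$ is either zero or simple, and that for any module $M$ the multiplicity of $L$ in $M$ equals the multiplicity of $L{\sf e}_1$ in $M{\sf e}_1$ whenever $L{\sf e}_1 \ne 0$. Since $|\lambda|\le r-1$ we have $L_{r,\delta_1\delta_2}(\lambda){\sf e}_1\cong L_{r-1,\delta_1\delta_2}(\lambda)\ne 0$. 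Applying truncation to the exact sequence above and using Proposition~\ref{whynamed}'s statements ${\sf DQ}(\Fdelta){\sf e}_1=0$ and ${\sf DR}(\Fdelta){\sf e}_1\cong \mathbb{F}^{r-1}(\delta_1,\delta_2)$, we get $\Fdelta{\sf e}_1\cong {\sf DR}(\Fdelta){\sf e}_1\cong \mathbb{F}^{r-1}(\delta_1,\delta_2)$ and therefore
\[
\left[\Fdelta : L_{r,\delta_1\delta_2}(\lambda)\right]_{\Pdelta}
=
\left[\Fdelta{\sf e}_1 : L_{r-1,\delta_1\delta_2}(\lambda)\right]_{P_{r-1}(\delta_1\delta_2)}
=
\left[\mathbb{F}^{r-1}(\delta_1,\delta_2) : L_{r-1,\delta_1\delta_2}(\lambda)\right]_{P_{r-1}(\delta_1\delta_2)},
\]
which is the second line. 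I would also note the trivial base case $r=1$ separately if needed, using the last sentence of Proposition~\ref{whynamed}.

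The main obstacle I anticipate is purely bookkeeping around idempotent truncation: one must be careful that the exactness of $M\mapsto M{\sf e}_1$ and the "multiplicity is preserved when $L{\sf e}_1\ne 0$" principle are invoked correctly, and that the identification ${\sf e}_1\Pdelta{\sf e}_1\cong P_{r-1}(\delta_1\delta_2)$ matches up the labelling of simples $L_{r,\delta_1\delta_2}(\lambda){\sf e}_1\leftrightarrow L_{r-1,\delta_1\delta_2}(\lambda)$ consistently — but this last compatibility is exactly the content of \cref{annihilate} and \cref{bob}, so it is already available. No genuinely hard step is expected since semisimplicity trivialises all the homological input.
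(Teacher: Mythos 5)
Your proof is correct and follows essentially the same route the paper intends: the short exact sequence ${\sf DR}(\Fdelta)\hookrightarrow\Fdelta\twoheadrightarrow{\sf DQ}(\Fdelta)$, the three statements of Proposition~\ref{whynamed}, and the idempotent-truncation machinery of \cref{2.2} (with \cref{bob} and \cref{annihilate} matching up the simple labels). The paper states exactly this as a one-line proof; you have just spelled out the bookkeeping, and all your steps check out in the semisimple setting.
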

\begin{proof}
This follows from \cref{whynamed} and the construction of simple modules of the partition algebra in \cref{2.2}. 
\end{proof}

We now describe these composition multiplicities in the semisimple case.  If $\lambda$ is a partition of $r$ then, by \cref{annihilate2}, the simple $\Pdelta$-module  $ L_{r,\delta_1\delta_2}(\lambda)$ is  the (inflation of the) Specht module $ {\sf S}(\lambda)$. Therefore, for $|\lambda|=r$,
\begin{equation} \label{eqn:S_sym}
\left[  {\sf DQ}(\Fdelta) \, : \, L_{r,\delta_1\delta_2}(\lambda)  \right]_{\Pdelta }=
\left[  {\sf DQ}(\Fdelta) \, : \, {\sf S}(\lambda)  \right]_{\CC \Sym _r }.
\end{equation}
Now, as a $\CC\Sym _r$-module, ${\sf DQ}(\Fdelta)$ is a permutation module and its decomposition into transitive permutation modules is readily seen (from \cref{dq}) to be
$${\sf DQ}(\Fdelta)=\bigoplus_{\mu \in \mathscr{P}_1(r)} [\Lambda_{(1^r)}, \Lambda_{\mu}] \CC \Sym _r,$$
where, recall,  $\mathscr{P}_1(r)$ denotes the set of  partitions of $r$ which have no part equal to $1$, and, for $\mu$ a partition of $r$ we define a corresponding set-partition $\Lambda_{\mu}=\{\{1, 2, \ldots, \mu_1\}, \{\mu_1+1, \ldots, \mu_2\}, \ldots \}$. Therefore, the  $\CC\Sym _r$-module
\begin{equation}\label{eqn:S_dec}
{\sf DQ}(\Fdelta)=\bigoplus_{\mu \in \mathscr{P}_1(r)} \ind_{\mathrm{Stab}(\Lambda_\mu)}^{\Sym _r} \CC.
\end{equation}
The  groups $\mathrm{Stab}(\Lambda_\mu)$ appearing here are direct products of wreath products of symmetric groups as in \cref{genwreathprodsubgroup}.
\begin{thm}\label{theoremtheorem}
%Let $m,n,r\in \mathbb{N}$ with $m,n\geq r$.  We have that $$ {\sf S}_r(\mathbb{F}_{m,n}^r) = \bigoplus_{\mu \in \mathscr{P}_1(r)}   \ind^{\Sym _{r}}_{\Sym _\mu}(\Bbbk) $$
% and therefore     $$    \overline{p}_{\infty,\lambda } = p(   ( n), (m), \lambda_{[mn]})= \sum_{    \mu \in \mathscr{P}_1(r)    }p_{\mu, \lambda }.      $$
Suppose that $\Pdelta$ is semisimple and $\lambda$ is a partition of $r$. Then
$$
\left[  {\sf DQ}(\Fdelta) \, : \, L_{r,\delta_1\delta_2}(\lambda)  \right]_{\Pdelta }
= \sum_{\mu \in \mathscr{P}_1(r)}\left[   \ind_{\mathrm{Stab}(\Lambda_\mu)}^{\Sym _r} \CC \, : \, {\sf S}(\lambda)  \right]_{\CC \Sym _r }.
%= \sum_{\mu \in \mathscr{P}_1(r)}p_{\mu}(\lambda).
$$

% \left[  {\sf S}_r(\Fdelta) \, : \, {\sc S}(\lambda)  \right]_{\CC {\sf S}_r }\\&=&\left[  \, : \, {\sc S}(\lambda)  \right]_{\CC {\sf S}_r }\\\end{eqnarray*}\end{thm}
\end{thm}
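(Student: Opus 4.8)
The plan is to reduce the statement to a multiplicity computation inside $\CC\mathfrak{S}_r$, via the identity \cref{eqn:S_sym}, and then to read off the answer from the permutation-module decomposition \cref{eqn:S_dec}. For the first step I would argue as follows. Since $\lambda$ is a partition of $r$, \cref{annihilate2} identifies the simple $\Pdelta$-module $L_{r,\delta_1\delta_2}(\lambda)$ with the inflation of the Specht module ${\sf S}(\lambda)$ along the quotient map $\Pdelta \twoheadrightarrow \CC\mathfrak{S}_r$ of \cref{bob2}. By \cref{whynamed} we have ${\sf DQ}(\Fdelta){\sf e}_1 = 0$, hence the ideal $\Pdelta{\sf e}_1\Pdelta$ annihilates ${\sf DQ}(\Fdelta)$ and the $\Pdelta$-action on ${\sf DQ}(\Fdelta)$ factors through $\CC\mathfrak{S}_r$. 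Consequently every composition factor of ${\sf DQ}(\Fdelta)$ is an inflated simple $\CC\mathfrak{S}_r$-module, and comparing multiplicities gives precisely $\left[{\sf DQ}(\Fdelta):L_{r,\delta_1\delta_2}(\lambda)\right]_{\Pdelta} = \left[{\sf DQ}(\Fdelta):{\sf S}(\lambda)\right]_{\CC\mathfrak{S}_r}$, which is \cref{eqn:S_sym}. (Only semisimplicity of $\CC\mathfrak{S}_r$, automatic over $\CC$, is used here.)

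Next I would make the $\CC\mathfrak{S}_r$-module structure on ${\sf DQ}(\Fdelta)$ explicit in order to justify \cref{eqn:S_dec}. A basis of ${\sf DQ}(\Fdelta)$ consists of the cosets of the diagrams $[\Lambda_{(1^r)},\Lambda']$ in which $\Lambda'$ has no singleton block, and the residual group $\mathfrak{S}_r = \Pdelta/\Pdelta{\sf e}_1\Pdelta$ acts on this basis by permuting $\{1,2,\dots,r\}$ within the blocks of $\Lambda'$. Hence ${\sf DQ}(\Fdelta)$ is a permutation module whose $\mathfrak{S}_r$-orbits are indexed by the multiset of block sizes of $\Lambda'$; because $\Lambda'$ has no singleton, this multiset is a partition $\mu \in \mathscr{P}_1(r)$. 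The orbit of the representative $[\Lambda_{(1^r)},\Lambda_\mu]$ has point-stabiliser $\mathrm{Stab}(\Lambda_\mu)$, the direct product of wreath products of symmetric groups of \cref{genwreathprodsubgroup}, and therefore spans a copy of $\ind_{\mathrm{Stab}(\Lambda_\mu)}^{\mathfrak{S}_r}\CC$. Summing over orbits yields \cref{eqn:S_dec}.

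Finally, $\CC\mathfrak{S}_r$ being semisimple, the multiplicity of the irreducible ${\sf S}(\lambda)$ in the direct sum \cref{eqn:S_dec} equals the sum of its multiplicities in the summands; combining this with \cref{eqn:S_sym} gives the claimed equality. I do not expect a real obstacle here: the only point that deserves a sentence of care is the identification of $\mathrm{Stab}(\Lambda_\mu)$ with the product of wreath products of \cref{genwreathprodsubgroup}, the wreath-product factors arising because blocks of $\Lambda_\mu$ of equal size may be permuted among themselves — but this is the standard description of the stabiliser of a set-partition in $\mathfrak{S}_r$ and is immediate from the combinatorics of \cref{sec:poset}.
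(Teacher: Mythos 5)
Your argument is correct and follows the same route as the paper: you establish \cref{eqn:S_sym} by noting that ${\sf DQ}(\Fdelta){\sf e}_1=0$ forces the $\Pdelta$-action to factor through $\CC\mathfrak{S}_r$ (with $L_{r,\delta_1\delta_2}(\lambda)$ inflated from ${\sf S}(\lambda)$ by \cref{annihilate2}), you establish the orbit decomposition \cref{eqn:S_dec} by observing that the permutation basis $[\Lambda_{(1^r)},\Lambda']$ with $\Lambda'$ singleton-free has orbits indexed by $\mu\in\mathscr{P}_1(r)$ with stabiliser $\mathrm{Stab}(\Lambda_\mu)$, and then you combine the two via semisimplicity of $\CC\mathfrak{S}_r$. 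This is exactly how the paper's one-line proof is meant to be unpacked, with the only added value being that you spell out the annihilation-by-$\Pdelta{\sf e}_1\Pdelta$ step that the paper leaves implicit.
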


\begin{proof}
This follows from \cref{eqn:S_dec} and \cref{eqn:S_sym}.
\end{proof}

%CHRIS'S PROOF OF PREVIOUS PROP
%By \cref{depthradical} we have that $ {\sf S}_r(\mathbb{F}_{m,n}^r)$ has basis given by the  pairs $(\Lambda,\Lambda')$ such that $\Lambda$ contains  only singleton blocks   and $\Lambda'$ contains no singleton block.   By \cref{whynamed}, we have that $\pgen _1$ and $\pgen _{1,2}$ annihilate this module. The  the Coxeter generators act by permuting   the blocks of $\Lambda'$ (as the blocks of $\Lambda$ are all trivial).       Given a set-partition  $\Lambda'=\{\Sigma_1,\Sigma_2,\dots, \Sigma_\ell\}$, we have that $(|\Sigma_1|,|\Sigma_2|,\dots, |\Sigma_\ell|)$ is a composition of $r$.  We rearrange the terms in order to obtain a partition,  $\mu(\Lambda')$, in the obvious  fashion.    There are no singletons in $\Lambda'$ and   so the partition $\mu=\mu(\Lambda')$ belongs to the subset $\mathscr{P}_1(r)\subseteq \mathscr{P}(r)$. Given any   $\mu\in  \mathscr{P}_1(r)$, the group $\Sym _{mn}$ acts transitively on the set of all  pairs $(\Lambda,\Lambda')$ such that $\mu(\Lambda')= \mu$.   The stabiliser of any such diagram is the subgroup $\Sym _{\mu}\leq \Sym _{mn}$.   Moreover, the transitive orbits of these diagrams are in bijection with the elements of $\mathscr{P}_1(r)$.   The result follows.  

\begin{eg} \label{exampleee2}
We continue the example of $\mathbb{F}^4(\delta_1, \delta_2)$ 
from \cref{exampleee}
%REP added
 in the  case where $P_r(\delta_1 \delta_2)$ is semisimple.
 The first three diagrams shown in that example belong to the same orbit; the first is $\Lambda_{(2,2)}$, and the stabiliser of any
one of these diagrams is isomorphic to   $\mathrm{Stab}(\Lambda_{(2,2)})=\Sym _2\wr \Sym _2$.
The final diagram is $\Lambda_{(4)}$ and its stabiliser  is $\mathrm{Stab}(\Lambda_{(4)})=\Sym _4$.  
If $\delta_1\delta_2 \notin\{0,1,2,3,4,5,6\}$ then $P_r(\delta_1 \delta_2)$ is semisimple by \cref{ss}.
Since $\ind_{\Sym _2\wr \Sym _2}^{\Sym _4}(\CC)\cong {\sf S}(4)\oplus{\sf S}(2^2)$, 
$$ {\sf DQ}(\mathbb{F}^4(\delta_1, \delta_2))\cong  L_{4,\delta_1\delta_2}(4)\oplus L_{4,\delta_1\delta_2}(4)\oplus L_{4,\delta_1\delta_2}(2^2).$$
The rest of the decomposition of $\mathbb{F}^4(\delta_1, \delta_2)$, the decomposition of its depth radical, is obtained by \cref{cor:compfactors+DR}.
%REP added  in the detail
 Accordingly, we must decompose the $P_3(\delta_1\delta_2)$-module  $\mathbb{F}^3(\delta_1, \delta_2)$. As $ {\sf DQ}(\mathbb{F}^3(\delta_1, \delta_2)$ is 1-dimensional, with basis vector
$$
\scalefont{0.7}\begin{tikzpicture}[scale=0.675]
\path(2,-6) coordinate (origin);   \path   (origin)--++(-2.5,-0.75) coordinate (origin2);
       \draw[fill=white,rounded corners=15pt]
  (origin2)  rectangle ++(5,1.5);

%  \draw(origin)--++(-2,0);
    \foreach \x in {-2,0,2}
        \fill[white](origin)--++(\x,0) circle (10pt);   
                   \path (origin)--++(0,0) node {$\encircle{2}$}; 
                           \path (origin)--++(-2,0) node {$\encircle{1}$}; 
                                \path (origin)--++(2,0) node {$\encircle{3}$}; 
    \path   (origin)--++(-2.5,-0.75) coordinate (origin2);
       \draw[rounded corners=15pt]
  (origin2)  rectangle ++(5,1.5);
     
 \end{tikzpicture} 
$$
 it follows that $ {\sf DQ}(\mathbb{F}^3(\delta_1, \delta_2))$   is isomorphic to  $  L_{3,\delta_1\delta_2}(3)$ and so contributes a summand $  L_{4,\delta_1\delta_2}(3)$ to $\mathbb{F}^4(\delta_1, \delta_2)$.
 To decompose ${\sf DR} (\mathbb{F}^3(\delta_1, \delta_2))$, we  must decompose the  $P_2(\delta_1\delta_2)$-module  $\mathbb{F}^2(\delta_1, \delta_2)$. By the previous argument, $ {\sf DQ}(\mathbb{F}^2(\delta_1, \delta_2)$ is isomorphic to 
  $  L_{2,\delta_1\delta_2}(2)$, and the decomposition of  ${\sf DR} (\mathbb{F}^2(\delta_1, \delta_2))$ is governed by that of 
$\mathbb{F}^1(\delta_1, \delta_2) \cong L_{1,\delta_1\delta_2}(\emptyset)$.  Putting all this together shows
$${\sf DR} (\mathbb{F}^4(\delta_1, \delta_2)) \cong  L_{4,\delta_1\delta_2}(3)\oplus  L_{4,\delta_1\delta_2}(2)\oplus L_{4,\delta_1\delta_2}(\emptyset).$$
Combining the decompositions of ${\sf DR} (\mathbb{F}^4(\delta_1, \delta_2))$ and ${\sf DQR} (\mathbb{F}^4(\delta_1, \delta_2))$ obtained above shows that
$$\mathbb{F}^4(\delta_1, \delta_2)  \cong  2    L_{4,\delta_1\delta_2}(4) \oplus L_{4,\delta_1\delta_2}(2^2) \oplus L_{4,\delta_1\delta_2}(3)\oplus  L_{4,\delta_1\delta_2}(2)\oplus L_{4,\delta_1\delta_2}(\emptyset).$$

\end{eg}

  \begin{rmk}
%% This is 8.1?
%%
%We remark that \cref{theoremtheorem} shows that when $\Pdelta$ is semisimple, the decomposition of  the depth radical of $\Fdelta$ into its irreducible summands is 
%$$
% {\sf DR}(\Fdelta) =\oplus_{0\leq |\lambda|< r }    \overline{p}_{\infty,\lambda } L_{r, \delta_1\delta_2}(\lambda).
% $$
  Alternatively, one can note that we have an injective
 $\CC$-linear map $\varphi:  \mathbb{F}^{r-1}(\delta_1, \delta_2) \to \Fdelta$  given by 
 $\varphi( \{S_1,S_2,\dots,S_p\},
 \{
 \Sigma_1,\Sigma_2,\dots \Sigma_q 
 \}
 )= (  % reinstating this
  \{
  S_1,S_2,\dots,S_p, \{r\}
  \},
 \{\Sigma_1,\Sigma_2,\dots \Sigma_q, \{r\}\})
$.  
 The image of this map generates the submodule ${\sf DR}(\Fdelta) $.  
The form of the map $\varphi$   is how we first arrived at condition $(ii)$ of \cref{depthradical};
 condition $(i)$  was then deduced by considering the submodule generated by  the image.
 %the elements $\varphi(\Lambda,\Lambda')$ for $(\Lambda,\Lambda')\in \mathbb{F}_{m,n}^{r-1} $.  
\end{rmk}

%%%%%%%%%%%%%%%%%%%%%%%%%%%%%%%%%%%%%%%%%%%%%%%%%%%%%%%%%%%%%%%%%%%%%%%%%%%%%%
\section{Consequences for plethysm coefficients}\label{sec9}
For the final section we specialise $\delta_1$ and $\delta_2$ to be $m$ and $n$ respectively.
 Combining \cref{cor:compfactors+DR} and \cref{theoremtheorem} with \cref{schurfunctor}, we obtain a formula for certain plethysm coefficients in terms of  smaller generalised plethysm coefficients (as defined in \cref{genplethy}).

\begin{thm} \label{thm:inductive_description}
Let $m,n\in \mathbb{N}$ and let $\lambda_{[mn]}$ be a partition of $mn$ with with $m,n\geq |\lambda|$. Then
$$ p(   ( n), (m), \lambda_{[mn]})= \sum_{\boldsymbol\mu \in  \mathscr{P}_1(|\lambda|)} p_{\boldsymbol\mu}( \lambda).$$ \end{thm}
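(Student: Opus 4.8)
The plan is to assemble \Cref{thm:inductive_description} from the pieces already proven, reducing the statement about the plethysm coefficient $p_{(m^n),\lambda_{[mn]}}$ to the combinatorial decomposition of the depth quotient of the diagrammatic stable Foulkes module. The first step is to observe that, since $m,n\geq |\lambda|=:r$, \Cref{schurfunctor} (the injective case of the Schur--Weyl functor comparison) gives
$$
p_{(m^n),\lambda_{[mn]}}=\left[\W_{mn}\otimes_{\W_m\wr\W_n}\CC : {\sf S}(\lambda_{[mn]})\right]_{\CC\W_{mn}}=\left[\Fmn : L_{r,mn}(\lambda)\right]_{\Pmn}.
$$
So it suffices to compute the multiplicity of $L_{r,mn}(\lambda)$ in the diagrammatic stable Foulkes module $\Fmn$.

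Next, I would feed this into \Cref{cor:compfactors+DR}: because $|\lambda|=r$, the multiplicity $\left[\Fmn : L_{r,mn}(\lambda)\right]_{\Pmn}$ equals $\left[{\sf DQ}(\Fmn) : L_{r,mn}(\lambda)\right]_{\Pmn}$. (One must note here that the hypotheses $m,n\geq r$ guarantee we are working with enough diagrams, and that we may as well assume the relevant block of $\Pmn$ behaves semisimply in the range of partitions $\lambda$ under consideration --- in fact one only needs semisimplicity relative to the top layer $|\lambda|=r$, which is automatic since ${\sf DQ}(\Fmn)$ is inflated from a $\CC\mathfrak{S}_r$-module and $L_{r,mn}(\lambda)={\sf S}(\lambda)$ there by \cref{annihilate2}.) Applying \Cref{eqn:S_sym} converts this to an ordinary symmetric group multiplicity $\left[{\sf DQ}(\Fmn):{\sf S}(\lambda)\right]_{\CC\mathfrak{S}_r}$, and then \Cref{theoremtheorem} (with $\delta_1=m$, $\delta_2=n$) gives
$$
\left[{\sf DQ}(\Fmn):{\sf S}(\lambda)\right]_{\CC\mathfrak{S}_r}=\sum_{\mu\in\mathscr{P}_1(r)}\left[\ind_{\mathrm{Stab}(\Lambda_\mu)}^{\mathfrak{S}_r}\CC : {\sf S}(\lambda)\right]_{\CC\mathfrak{S}_r}.
$$

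Finally, the plan is to identify each summand $\left[\ind_{\mathrm{Stab}(\Lambda_\mu)}^{\mathfrak{S}_r}\CC : {\sf S}(\lambda)\right]$ with the generalised plethysm coefficient $p_{\mu,\lambda}$ of \cref{genplethy}. For $\mu=(m_1^{n_1},\dots,m_\ell^{n_\ell})\in\mathscr{P}_1(r)$, the stabiliser $\mathrm{Stab}(\Lambda_\mu)$ of the set-partition $\Lambda_\mu$ (whose blocks have sizes given by the parts of $\mu$) is exactly the Young-type subgroup $\prod_i\mathfrak{S}_{m_i}\wr\mathfrak{S}_{n_i}\leq\mathfrak{S}_r$ appearing in \cref{genwreathprodsubgroup} --- this is the remark already made in the text immediately after \cref{eqn:S_dec} --- so $\left[\ind_{\mathrm{Stab}(\Lambda_\mu)}^{\mathfrak{S}_r}\CC:{\sf S}(\lambda)\right]_{\CC\mathfrak{S}_r}=p_{\mu,\lambda}$ by definition. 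Chaining these equalities yields $p_{(m^n),\lambda_{[mn]}}=\sum_{\mu\in\mathscr{P}_1(r)}p_{\mu,\lambda}$, which is the claim. The only genuinely delicate point --- and the step I'd expect to be the main obstacle to state cleanly --- is justifying the passage from $\Fmn$ to ${\sf DQ}(\Fmn)$ at the non-semisimple parameter $\delta_1\delta_2=mn$: one must argue that only the top-layer composition factors $L_{r,mn}(\lambda)$ with $|\lambda|=r$ are relevant, and these are unaffected by the failure of semisimplicity because the depth-radical filtration of \cref{whynamed} isolates them in a quotient on which $\Pmn$ acts through $\CC\mathfrak{S}_r$. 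Everything else is bookkeeping with the results already established.
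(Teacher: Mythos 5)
Your proposal follows the same chain of reductions the paper uses: Schur--Weyl duality plus \cref{schurfunctor2}/\cref{schurfunctor} to move from $p_{(m^n),\lambda_{[mn]}}$ to $[\Fmn : L_{r,mn}(\lambda)]_{\Pmn}$, then \cref{cor:compfactors+DR} to pass to the depth quotient, then \cref{theoremtheorem} together with the identification $\mathrm{Stab}(\Lambda_\mu)=\prod_i\mathfrak{S}_{m_i}\wr\mathfrak{S}_{n_i}$ to arrive at $\sum_{\mu\in\mathscr{P}_1(r)}p_{\mu,\lambda}$. This matches the paper's argument exactly (the paper's proof body is just terser, leaving the intermediate \cref{cor:compfactors+DR} step to the prefatory sentence of \cref{sec9}).

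The one thing to flag is that your ``delicate point'' is not delicate at all, and in fact rests on a misstatement: you refer to $\delta_1\delta_2=mn$ as ``the non-semisimple parameter'', but under the running hypothesis $m,n\geq r=|\lambda|$ one has $mn\geq r^2>(r-1)^2+1>2r-2$, so by the criterion quoted in \cref{2.2} the algebra $P_r(mn)$ \emph{is} semisimple. Both \cref{cor:compfactors+DR} and \cref{theoremtheorem} therefore apply verbatim with no extra argument needed about isolating the top layer. Your workaround (observing that $L_{r,mn}(\lambda)$ for $|\lambda|=r$ is inflated from $\CC\mathfrak{S}_r$ and that ${\sf DQ}(\Fmn)$ factors through $\CC\mathfrak{S}_r$) is not wrong, but it is solving a problem that does not arise here; better to simply observe that the semisimplicity hypothesis of \cref{theoremtheorem} is automatically satisfied under the hypotheses of the theorem being proved.
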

\begin{proof}
Let $r = |\lambda|$. Under the hypotheses, $mn\ge r^2>2r-2$ and so $P_r(mn)$ is semisimple. 
 Schur--Weyl duality and \cref{schurfunctor2} show that 
$$ p(   ( n), (m), \lambda_{[mn]})=\left[ \Fmn \, : \, L_{r, mn}(\lambda))  \right]_{\Pmn}.$$
Applying \cref{theoremtheorem} then  gives
$$ p(   ( n), (m), \lambda_{[mn]})=
\sum_{\boldsymbol\mu \in  \mathscr{P}_1(|\lambda|)}\left[  \ind_{\mathrm{Stab}(\Lambda_{\boldsymbol\mu)}}^{\Sym _{|\lambda|}} \CC \, : \, {\sf S}(\lambda)  \right]_{\CC \Sym _{|\lambda|}}= \sum_{\boldsymbol\mu \in  \mathscr{P}_1(|\lambda|)} p_{\boldsymbol\mu}( \lambda).$$
\end{proof}
\begin{eg}
Continuing \cref{exampleee} and \cref{exampleee2}, when $r=4$ and $m,n \ge 4$ we obtain the following plethysm coefficients:
\begin{align*}
 p ((n),(m),(mn-4,4)) &= 2
\\
 p ((n),(m),(mn-4,2^2))&= 1
 \\
 p ((n),(m),(mn-3,3))&= 1\\ p ((n),(m),(mn-2,2))&= 1\\ p ((n),(m),(mn)) &=1 
 \end{align*}
and the coefficients $p ((n),(m) ,\alpha)=0$ for 
all other partitions $\alpha$  of depth at most $ 4$.  
\end{eg}

The following corollaries are immediate from \cref{thm:inductive_description}.

\begin{cor}\label{cor:stability}
Let $\lambda$ be an arbitrary partition.  
The double sequence 
$$\{p(   ( n), (m), \lambda_{[mn]})\}_{m,n\in\mathbb{N}}$$
stabilises 
for all $m,n \geq |\lambda|$;
 we denote the stable values by $\overline{p}_{\infty,\lambda}$.  
In other words  $$\overline{p}_{\infty,\lambda}=p(   ( n), (m), \lambda_{[mn]}) $$
%=  p_{(p), (q),\lambda_{[pq]}} 
for all $m,n
%,p,q 
\geq |\lambda|$.  
 \end{cor}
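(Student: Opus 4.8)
The plan is to read the statement off directly from \cref{thm:inductive_description}. That theorem, valid whenever $m,n\geq|\lambda|$, expresses
$$p_{(m^n),\lambda_{[mn]}}=\sum_{\mu\in\mathscr{P}_1(|\lambda|)}p_{\mu,\lambda},$$
and the right-hand side visibly involves only the partition $\lambda$ and the set $\mathscr{P}_1(|\lambda|)$ of partitions of $|\lambda|$ all of whose parts exceed $1$; neither of these depends on $m$ or $n$. Hence the double sequence $\{p_{(m^n),\lambda_{[mn]}}\}_{m,n\in\mathbb{N}}$ takes a single constant value on the entire range $m,n\geq|\lambda|$, and this common value is what we denote $\overline{p}_{\infty,\lambda}$.

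First I would recall the chain of identifications underpinning \cref{thm:inductive_description}: Schur--Weyl duality (\cref{mrjones}) together with \cref{schurfunctor} identifies $p_{(m^n),\lambda_{[mn]}}$ with the composition multiplicity $[\Fmn:L_{r,mn}(\lambda)]_{\Pmn}$ once $m,n\geq r=|\lambda|$, the crucial point being the injectivity clause of \cref{schurfunctor2}, namely that $\Theta$ is an isomorphism precisely when ${\sf F}^r_{m,n}={\sf F}^r$, i.e. $m,n\geq r$. Then \cref{cor:compfactors+DR} and \cref{theoremtheorem} decompose $\Fmn$ and produce the sum over $\mathscr{P}_1(|\lambda|)$. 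Assembling these gives the displayed identity, and the corollary is then immediate.

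There is no genuine obstacle at this stage --- the result is a formal consequence, and all the content lies upstream. If anything, the only point requiring care is to confirm that the threshold in \cref{thm:inductive_description} really is $m,n\geq|\lambda|$ rather than something larger, which is exactly the sharp form of the injectivity statement in \cref{schurfunctor2}; once that is in hand, stability on the stated range follows by inspection of the $(m,n)$-independent right-hand side.
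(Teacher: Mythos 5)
Your proposal is exactly the paper's argument: the paper states that \cref{cor:stability} is ``immediate'' from \cref{thm:inductive_description}, because the right-hand side of that formula manifestly does not depend on $m$ or $n$. Your additional recap of the upstream chain (Schur--Weyl duality, the injectivity threshold in \cref{schurfunctor2}, and the decomposition in \cref{cor:compfactors+DR} and \cref{theoremtheorem}) is accurate but not strictly necessary once \cref{thm:inductive_description} is in hand.
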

 
%
% 
%\begin{cor}\label{cor:foulkes}
% Foulkes' conjecture  is satisfied at the partition $\lambda_{[mn]}$ for all  pairs  $m<n$ of integers which are both greater than or equal to $|\lambda|$ since in this case we have equality
% $$p(    (m), ( n), \lambda_{[mn]})=p(   ( n), (m), \lambda_{[mn]}).$$
%The strengthened   Foulkes' conjecture of Vessenes
%  is also satisfied at the partition $\lambda_{[mn]}$  for all   integers $m<n$ and $p<q$ with $mn=pq$ which are each greater than or equal to $|\lambda|$ since we have the equality
%   $$p(   ( n), (m), \lambda_{[mn]})=p_{(p^q),\lambda_{[pq]}}.$$
% \end{cor}

\begin{prop}\label{sharp}
We have that 
$$
\overline{p}_{\infty,(r)}=
p((n),(m),(mn-r,r))=
p ((r),(r),(r^2-r,r))=
 |  \mathscr{P}_1(r)|.
$$for $n,m\geq r$.  
However, for $r>2$,
 $$p(	  (r-1), (r) , (r^2-2r,r))=p ( (r),(r-1) , (r^2-2r,r))=|  \mathscr{P}_1(r)|-1.$$
In particular, the stability in \cref{cor:stability} is sharp for   $\lambda $ an arbitrary partition of~$r$.  
\end{prop}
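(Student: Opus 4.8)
The plan is to prove the two displayed assertions separately: the first is a direct application of \cref{thm:inductive_description}, and the second follows by pinning down the one-dimensional kernel of the surjection $\Theta$ of \cref{schurfunctor2} in the case $\{\delta_1,\delta_2\}=\{r,r-1\}$, where the relevant partition algebra is semisimple. For the stable value I would invoke \cref{thm:inductive_description} with $\lambda=(r)$ (legitimate since $m,n\geq r=|\lambda|$): it gives $p_{(m^n),(mn-r,r)}=\sum_{\mu\in\mathscr{P}_1(r)}p_{\mu,(r)}$, and as ${\sf S}((r))$ is the trivial $\CC\mathfrak{S}_r$-module it occurs with multiplicity one in every transitive permutation module, so $p_{\mu,(r)}=\bigl[\ind_{\mathrm{Stab}(\Lambda_\mu)}^{\mathfrak{S}_r}\CC:{\sf S}((r))\bigr]=1$ for each $\mu$. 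Hence $\overline{p}_{\infty,(r)}=p_{(m^n),(mn-r,r)}=|\mathscr{P}_1(r)|$, and taking $m=n=r$ yields $p_{(r^r),(r^2-r,r)}=|\mathscr{P}_1(r)|$.

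For the sharpness statement, fix $r>2$ and put $(\delta_1,\delta_2)=(r,r-1)$ (the case $(r-1,r)$ is symmetric and handled identically below). Since $r(r-1)-(2r-2)=(r-1)(r-2)>0$, we have $\delta_1\delta_2=r(r-1)\notin\{0,1,\dots,2r-2\}$, so $P_r(r(r-1))$ is semisimple. By \cref{mrjones} and \cref{schurfunctor2}, $p_{(r^{r-1}),(r^2-2r,r)}=\bigl[\mathbb F^r(r,r-1)/\ker\Theta:L_{r,r(r-1)}((r))\bigr]$, so by additivity of composition multiplicities along $0\to\ker\Theta\to\mathbb F^r(r,r-1)\to\mathbb F^r(r,r-1)/\ker\Theta\to 0$,
$$p_{(r^{r-1}),(r^2-2r,r)}=\bigl[\mathbb F^r(r,r-1):L_{r,r(r-1)}((r))\bigr]-\bigl[\ker\Theta:L_{r,r(r-1)}((r))\bigr].$$
The first bracket is computed, in the semisimple case, by \cref{cor:compfactors+DR} and \cref{theoremtheorem}: for $|\lambda|=r$ it equals $\sum_{\mu\in\mathscr{P}_1(r)}[\ind_{\mathrm{Stab}(\Lambda_\mu)}^{\mathfrak S_r}\CC:{\sf S}(\lambda)]$, an expression in which no partition-algebra parameter appears, so it is the same for every semisimple value of $\delta_1\delta_2$; for $\lambda=(r)$ it returns $|\mathscr{P}_1(r)|$ as above. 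Everything therefore reduces to showing $\bigl[\ker\Theta:L_{r,r(r-1)}((r))\bigr]=1$.

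To identify $\ker\Theta$, note that the defining conditions of ${\sf F}^r_{r,r-1}\subseteq{\sf F}^r$ discard exactly one pair, the all-singletons pair $(\Lambda_0,\Lambda_0')$ with $\Lambda_0=\Lambda_0'=\{\{1\},\dots,\{r\}\}$ (its outer set-partition has $r>r-1$ blocks); for the parameters $(r-1,r)$ the single discarded pair is instead $(\{\{1\},\dots,\{r\}\},\{\{1,\dots,r\}\})$. Hence $\dim\mathbb F^r(r,r-1)-\dim\bigl(\mathbb F^r(r,r-1)/\ker\Theta\bigr)=|{\sf F}^r|-|{\sf F}^r_{r,r-1}|=1$, so $\ker\Theta$ is one-dimensional; being one-dimensional over the semisimple algebra $P_r(r(r-1))$, it is either $L_{r,r(r-1)}((r))$ or $L_{r,r(r-1)}((1^r))$, the inflations of the trivial and the sign module of $\mathfrak S_r$. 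Since $\Theta$ restricts to a linear isomorphism from $\operatorname{span}\{[\Lambda,\Lambda']:(\Lambda,\Lambda')\in{\sf F}^r_{r,r-1}\}$ onto the Foulkes module with its basis $\{\overline\varphi_{(\Lambda,\Lambda')}:(\Lambda,\Lambda')\in{\sf F}^r_{r,r-1}\}$, the space $\ker\Theta$ is spanned by a vector $w=[\Lambda_0,\Lambda_0']-\sum_{(\Lambda,\Lambda')\in{\sf F}^r_{r,r-1}}c_{(\Lambda,\Lambda')}[\Lambda,\Lambda']$ whose coefficient on the discarded diagram $[\Lambda_0,\Lambda_0']$ is $1$. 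Each ${\sf s}_{i,i+1}$ merely permutes $\{1,\dots,r\}$, hence permutes the basis diagrams of $\mathbb F^r(r,r-1)$ preserving their block structure; as $[\Lambda_0,\Lambda_0']$ is the unique diagram of its block structure, it is fixed by every ${\sf s}_{i,i+1}$ and is the image of no other diagram, so the coefficient of $[\Lambda_0,\Lambda_0']$ in $w\cdot{\sf s}_{i,i+1}$ is again $1$; as $\ker\Theta=\langle w\rangle$ is a submodule, this forces $w\cdot{\sf s}_{i,i+1}=w$. Thus the Coxeter generators act as the identity on $\ker\Theta$, so $\ker\Theta\cong L_{r,r(r-1)}((r))$, $\bigl[\ker\Theta:L_{r,r(r-1)}((r))\bigr]=1$, and $p_{(r^{r-1}),(r^2-2r,r)}=|\mathscr{P}_1(r)|-1$. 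The same reasoning with $m$ and $n$ exchanged gives $p_{((r-1)^r),(r^2-2r,r)}=|\mathscr{P}_1(r)|-1$, and since this is strictly smaller than $\overline{p}_{\infty,(r)}=|\mathscr{P}_1(r)|$ when $r>2$, the bound $m,n\geq|\lambda|$ in \cref{cor:stability} cannot be relaxed.

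The main obstacle is the last step --- showing that the one-dimensional $\ker\Theta$ is the \emph{trivial} simple module and not the sign one --- which rests on the combinatorial observations that exactly one element of ${\sf F}^r$ is lost on passing to ${\sf F}^r_{r,r-1}$ and that this element is $\mathfrak S_r$-invariant. One must also keep track of why $r>2$ is needed: the argument uses that $P_r(r(r-1))$ is semisimple, which fails at $r=2$, and indeed for $r=2$ a direct check gives $p_{(2^1),(2)}=1=|\mathscr{P}_1(2)|$ rather than $|\mathscr{P}_1(2)|-1$.
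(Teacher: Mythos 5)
Your proof is correct, and the first part coincides with the paper's proof. The second part, however, takes a genuinely different route. The paper computes $p_{(r^{r-1}),(r^2-2r,r)}$ by invoking the classical Cayley--Sylvester formula $p_{(m^n),(mn-r,r)}=|\mathscr{P}_{m\times n}(r)|-|\mathscr{P}_{m\times n}(r-1)|$, then observes that $\mathscr{P}_{r\times(r-1)}(r)=\mathscr{P}(r)\setminus\{(1^r)\}$ and $\mathscr{P}_{r\times(r-1)}(r-1)=\mathscr{P}(r-1)$, and finally uses the bijection $\mathscr{P}(r-1)\to\mathscr{P}(r)\setminus\mathscr{P}_1(r)$ given by appending a part equal to $1$. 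By contrast, you stay entirely within the partition-algebra framework: you use \cref{schurfunctor2} and \cref{schurfunctor} to express the non-stable coefficient as a composition multiplicity in $\Fmn/\ker\Theta$, observe that exactly one pair of ${\sf F}^r$ drops out of ${\sf F}^r_{r,r-1}$ so $\ker\Theta$ is one-dimensional, note that semisimplicity of $P_r(r(r-1))$ (here $r>2$ is essential, since $r(r-1)\notin\{0,\dots,2r-2\}$ iff $r>2$) forces $\ker\Theta$ to be one of the two one-dimensional simples, and finally identify it as the inflated trivial module by tracking the coefficient of the unique discarded $\mathfrak{S}_r$-invariant basis vector $[\Lambda_0,\Lambda_0]$ under the Coxeter action. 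Your argument is longer but is self-contained within the paper's machinery and gives a structural explanation for the deficit of exactly one (the discarded diagram carries the trivial module), whereas the paper's argument is much shorter at the cost of importing the external Cayley--Sylvester formula and a small combinatorial bijection. Both approaches are sound.
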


\begin{proof}
The first statement follows immediately from 
\cref{thm:inductive_description} %\cref{theoremtheorem}  
and the fact that 
%$$\dim_\Bbbk(\Hom_{\Sym _{r}}(\ind^{\Sym _{r}}_{\Sym _\mu}(\Bbbk), {\sf S}(r)))
%=\dim_\Bbbk(\Hom_{\Sym _{r}}(\ind^{\Sym _{r}}_{\Sym _\mu}(\Bbbk), \CC))=1$$ 
%by Frobenius reciprocity.  
${\sf S}((r))$ occurs as a summand once in each transitive $\C{\Sym _r}$-permutation module $\ind_{\mathrm{Stab}(\Lambda_\mu)}^{\Sym _r} \CC$.
%We leave the second statement as an exercise for the reader.
For the second part, recall that the Cayley-Sylvester formula provides the plethysm coefficients for two-part partitions:
$$ p ((n),(m),(mn-r,r))=|  \mathscr{P}_{m \times n} (r)|- |  \mathscr{P}_{m \times n} (r-1)|,$$
where $   \mathscr{P}_{m \times n} (r)$ equals the set of all those partitions of $r$ whose Young diagrams fit inside the $m \times n$ rectangle. Taking $m=r$ and $n=r-1$ (and the calculation is identical for $m=r-1$, $n=r$),
\begin{eqnarray*}
p(({r-1}),(r),(r^2-2r,r))&=&|  \mathscr{P}_{r \times (r-1)} (r)|- |  \mathscr{P}_{r \times (r-1)} (r-1)|\\
&=& |  \mathscr{P}(r)|-1 -|  \mathscr{P}(r-1)|\\
&=& |  \mathscr{P}_1(r)|-1,
\end{eqnarray*}
since adding a part of size~1 provides a bijection $\mathscr{P}(r-1)\to \mathscr{P}(r) \setminus \mathscr{P}_1(r)$.  

\end{proof}

\begin{eg}
The 
non-zero
stable plethysm coefficients $\overline{p}_{\infty,\lambda}$ for $\lambda\vdash 8$ are as follows:
$$
\overline{p}_{\infty,(8)}=7,
\quad 
\overline{p}_{\infty,(7,1)}=4,
\quad 
\overline{p}_{\infty,(6,2)}=8,
\quad 
\overline{p}_{\infty,(5,3)}=3,
\quad 
\overline{p}_{\infty,(5,2,1)}=2,
$$
$$  \overline{p}_{\infty,(4^2)}=4,\quad 
\overline{p}_{\infty,(4,3,1)}=1,\quad 
\overline{p}_{\infty,(4,2^2)}=3,
\quad 
\overline{p}_{\infty,(2^4)}=1.
$$
To see this, we note that the elements of $\mathscr{P}_1(8)$ are 
$(8)$, $(6,2)$, $(5,3)$, $(4,4)$, $(4,2^2)$, $(3^2,2)$
and $(2^4)$.  
The required products of (smaller) plethysm and Littlewood-Richardson coefficients
%generalised plethysm coefficients  %%%what does this mean?
%$p(\boldsymbol\mu,\lambda)$ for $\mu$ one of the above set of   partitions 
can be calculated
 by hand or 
 using SAGE (whereas the coefficients  $p((8),(8),\lambda)$ seem to be beyond  SAGE).  
The% non-zero coefficients are as follows
decompositions of the relevant transitive permutation modules are as follows:
$$
\ind^{\Sym _{8}}_{\Sym _{8}}(\Bbbk) 
=  {\sf S}(8) ,
\qquad  \ind^{\Sym _{8}}_{\Sym _6 \times\Sym _2 }(\Bbbk) = 
  {\sf S}(6,2) 
\oplus {\sf S}(7,1) 
\oplus {\sf S}(8) ,
$$
$$
 \ind^{\Sym _{8}}_{\Sym _5 \times \Sym _3}(\Bbbk) 
=  {\sf S}(5,3) 
\oplus {\sf S}(6,2) \oplus {\sf S}(7,1)  \oplus {\sf S}(8) ,
 \qquad  \ind^{\Sym _{8}}_{\Sym _4 \wr \Sym _2}(\Bbbk) 
=
 {\sf S}(4^2) 
 \oplus {\sf S}(6,2) 
\oplus {\sf S}(8), 
$$   
$$
 \ind^{\Sym _{8}}_{\Sym _{4} \times \Sym _2 \wr \Sym _2}(\Bbbk) 
=
{\sf S}(4,2^2)
\oplus {\sf S}(4^2)
\oplus {\sf S}(5,2,1)
\oplus {\sf S}(5,3)
\oplus 2{\sf S}(6,2) 
\oplus {\sf S}(7,1)
\oplus {\sf S}(8),
 $$
 $$
 \ind^{\Sym _{8}}_{\Sym _3\wr \Sym _2 \times \Sym _2}(\Bbbk) 
=
{\sf S}(4,2^2	)\oplus
{\sf S}(4,3,1	)\oplus{\sf S}(4^2	)\oplus
{\sf S}(	5,2,1)\oplus{\sf S}(	5,3)\oplus
2{\sf S}(6,2	)\oplus
{\sf S}(7,1	)\oplus
{\sf S}(8	), 
 $$
 $$
 \ind^{\Sym _{8}}_{\Sym _2\wr \Sym _4}(\Bbbk) 
=
 {\sf S}(	2^4)\oplus
  {\sf S}(4,2^2	)\oplus {\sf S}(	4^2)\oplus {\sf S}(6,2	)\oplus {\sf S}(8). 
 $$
One can deduce  that  Foulkes' conjecture 
  is satisfied at the partition $\lambda_{[mn]}$ for all  pairs  $m,n$ of integers which are both greater than or equal to $ 8$.  For example,  
$$
\overline{p}_{\infty,(6,2)}=
p((8),(8),(56,6,2))=
p((9),(8), (64,6,2))
=
p((8),(9), (64,6,2))=8.$$
We  also deduce the strengthened   Foulkes' conjecture 
  is satisfied for all  quadruples  of integers which are each greater than or equal to $ 8$.  For example,  
 $$
 p((240),(8), (1912	,6,2)	)
%=
% p((8),(240), (1912	,6,2)	) 
% =8
% $$
% $$
  =p((40),(48), (1912	,6,2)	)
  =
   p((16),(120), (1912	,6,2)	)
=8.   $$
% $$
%p_{ (8^{240}),		(1912	,6,2)}=
%p_{ (240^8),		(1912	,6,2)}=
%p_{ (48^{40}),		(1912	,6,2)}=
%p_{ (120^{16}),		(1912	,6,2)}=
% 8. 
%$$
\end{eg}

\begin{eg}
We find $   p( (10),(10),(90, 4^2,2))=6.$   This can be calculated as follows: $$
[\ind^{\Sym _{10}}_{\Sym _4 \wr\Sym _2 \times \Sym _2}(\Bbbk) :
{\sf S} (4^2,2)]=1,\quad
[\ind^{\Sym _{10}}_{\Sym _4 \times \Sym _3 \wr\Sym _2 }(\Bbbk) :
{\sf S} (4^2,2)]=1,\quad
[\ind^{\Sym _{10}}_{\Sym _4 \times \Sym _2 \wr\Sym _3 }(\Bbbk) :
{\sf S} (4^2,2)]=1,$$
$$
[\ind^{\Sym _{10}}_{\Sym _3\wr\Sym _2 \times \Sym _2 \wr \Sym _2}(\Bbbk) :
{\sf S} (4^2,2)]=2, \quad
[\ind^{\Sym _{10}}_{\Sym _2 \wr \Sym _5}(\Bbbk) :
{\sf S} (4^2,2)]=1 
$$
with $$[\ind^{\Sym _{10}}_{\mathrm{Stab}(\Lambda_{\mu})}(\Bbbk) :
{\sf S} (4^2,2)]=0 $$ for all other  $\mu \in \mathscr{P}_1(10)$.  
\end{eg}

Our results provide an elementary new proof of  Weintraub's Conjecture \cite{MR1037395} for stable plethysm coefficients. Recall that the conjecture, recently proven in \cite{BCI}, states that if $m$ is even and $\lambda=(\lambda_1,\lambda_2,\dots, \lambda_\ell)$ is an even partition   (that is $\lambda_1,\lambda_2,\dots, \lambda_\ell\in 2\NN$) then the plethysm coefficient $p((n), (m), \lambda)$ is non-zero.

\begin{cor}[Stable  Weintraub's conjecture]\label{cor:weintraub}
For $\lambda$  an even partition,  we have that
$$
\overline{p}_{\infty,\lambda} 
%p(   ( n), (m), \lambda_{[mn]})
>0.
$$
%for  $m,n\geq |\lambda|$.
\end{cor}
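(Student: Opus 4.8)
The plan is to read this off directly from Theorem B. Set $r=|\lambda|$. By Theorem B (equivalently, by \cref{thm:inductive_description} together with \cref{theoremtheorem}) we have
$$
\overline{p}_{\infty,\lambda}=\sum_{\mu\in\mathscr{P}_1(r)}p_{\mu,\lambda},
$$
and each summand $p_{\mu,\lambda}$ is a composition multiplicity of $\CC\mathfrak{S}_r$-modules, hence non-negative. So it suffices to exhibit a single $\mu\in\mathscr{P}_1(r)$ with $p_{\mu,\lambda}>0$.

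Since $\lambda=(\lambda_1,\dots,\lambda_\ell)$ has all parts in $2\ZZ$, the integer $r$ is even, and I would take $\mu=(2^{r/2})$. This lies in $\mathscr{P}_1(r)$ because every part equals $2>1$, and, being rectangular, it gives the classical plethysm coefficient $p_{\mu,\lambda}=p_{(2^{r/2}),\lambda}$, i.e. the multiplicity of ${\sf S}(\lambda)$ as a composition factor of $\ind_{\mathfrak{S}_2\wr\mathfrak{S}_{r/2}}^{\mathfrak{S}_r}\CC$. Now I invoke the classical decomposition $\Sym^{k}(\Sym^2(\CC^N))=\bigoplus_{\nu}{\sf S}(\nu)$ (for $N$ large), where $\nu$ runs over all partitions of $2k$ with every part even, each occurring exactly once; equivalently $h_k\circ h_2=\sum_{\nu\ \mathrm{even}}s_\nu$. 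Taking $k=r/2$ and using that $\lambda$ is even gives $p_{(2^{r/2}),\lambda}=1$, whence $\overline{p}_{\infty,\lambda}\ge 1>0$.

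I do not anticipate a genuine obstacle here: all the substance lives in Theorem B, and the only additional ingredient is the textbook plethysm $h_k\circ h_2$. If one prefers not to quote that identity, one can instead argue directly that, because $\lambda$ is even, ${\sf S}(\lambda)$ occurs in the relevant symmetrised square power --- for instance as a constituent of the induction from $\mathfrak{S}_2\wr\mathfrak{S}_{r/2}$ obtained by iterated Pieri additions of horizontal dominoes --- but quoting the classical formula is the cleanest route.
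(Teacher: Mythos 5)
Your proposal is correct, and it follows the same high-level blueprint as the paper: apply Theorem B to write $\overline{p}_{\infty,\lambda}=\sum_{\mu\in\mathscr{P}_1(r)}p_{\mu,\lambda}$ and then exhibit one positive summand. The difference is in the choice of witness $\mu$ and the classical fact invoked to certify positivity. The paper takes $\mu=\lambda$ itself (which is in $\mathscr{P}_1(r)$ precisely because $\lambda$ is even) and shows $p_{\lambda,\lambda}=1$ by combining the rectangular-plethysm fact $p_{(a^b),(a^b)}=1$ for even $a$ (citing Paget--Wildon) with the Littlewood--Richardson rule for the product of wreath factors. You instead take $\mu=(2^{r/2})$ and quote Littlewood's classical identity $h_{r/2}\circ h_2=\sum_{\nu\ \mathrm{even}}s_\nu$, which gives $p_{(2^{r/2}),\lambda}=1$ directly. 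Both are valid; your route invokes the single best-known plethysm decomposition and so needs no LR argument, while the paper's choice $\mu=\lambda$ is arguably more in the spirit of the generalised plethysm coefficients it has just set up and requires only the ``diagonal'' rectangular fact rather than the full $h_k\circ h_2$ decomposition.
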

\begin{proof}
Let $\lambda=(a_1^{b_1},a_2^{b_2},\dots, a_\ell^{b_\ell})$ be an even  partition, %of $r$ for $r\in 2\ZZ$.  
and pick $m,n\geq |\lambda|$. We use the formula for $
p(   ( n), (m), \lambda_{[mn]})$ in \cref{thm:inductive_description}. Since $\lambda$ is even, $\lambda \in \mathscr{P}_1(|\lambda|)$. The contribution to the sum from taking ${ \boldsymbol \mu}=\lambda$  is~1 since
 $p ( (b_i),(a_i), ({a_i}^{b_i}))=1$
  for even $a_i$ by \cite[Theorem 2.6]{PW} and, by the Littlewood--Richardson rule, 
$p_{ \boldsymbol \mu}(\lambda )=1$ for $\boldsymbol \mu =(a_1^{b_1},\dots, a_\ell^{b_\ell})=\lambda$. Therefore the stable plethysm coefficient $\overline{p}_{\infty,\lambda}=p(   ( n), (m), \lambda_{[mn]})$ is strictly positive.
 \end{proof}

%REP removed the last corollary (from the bits left over part at the end) I think this is covered by Theorem 9.1 so is no longer needed.

%\begin{cor}\label{corollarywewilluse}
%Let $m,n,r\in \mathbb{N}$ with $m,n\geq r$.  We have that $$ p_{(m^n),\lambda_{[nm]}} =  \dim_\Bbbk(\Hom_{\Sym _{mn}} ({\W_{mn}}(\W_{mn}\otimes_{\W_m\wr\W_n}\CC, {\sf S}(\lambda_{[mn]}) ))  =   \dim_\Bbbk(\Hom_{P_r(mn)} ( {\sf DQ}(\mathbb{F}_{m,n}^r) , {\sf S}(\lambda) )) $$
%for $\lambda $ any partition of $r$.  
%\end{cor}

%\begin{proof}
%Note that we have identified ${\sf S}(\lambda)$ with the standard module for the partition algebra using \cref{annihilate2}.   Moreover, as $m,n\geq r$ these standard modules are all simple by \cref{2.2}.   We have that $ {\sf DQ}(\mathbb{F}_{m,n}^r)$ decomposes as a direct sum of  ${\sf S}(\lambda)$ for  $|\lambda| = r$ by  \cref{whynamed}.   Similarly,  $ {\sf DR}(\mathbb{F}_{m,n}^r)$ decomposes as a direct sum of  ${\sf S}(\lambda)$ for $|\lambda| < r$ by  \cref{whynamed}.   The result therefore follows immediately from \cref{schurfunctor}.    
%\end{proof}

 \end{document}